\documentclass[12pt]{amsart}

\usepackage{standalone}
\usepackage{amssymb}
\usepackage{bbm}
\usepackage{enumitem}
\usepackage{url}
\usepackage{hyperref}
\usepackage{comment}
\usepackage{geometry}
\usepackage{mathtools}
\usepackage{ytableau}
\usepackage{bbold}
\usepackage{subcaption}
\usepackage{rotating}
\usepackage{tikz,graphicx}
\usetikzlibrary{decorations.pathreplacing}
\usetikzlibrary{arrows.meta}
\usetikzlibrary{positioning}

\tikzstyle{vertex}=[circle, draw, inner sep=0pt, minimum size=4pt]
\newcommand{\vertex}{\node[vertex]}

\newtheorem{theorem}{Theorem}[section]
\newtheorem{lemma}[theorem]{Lemma}
\newtheorem{proposition}[theorem]{Proposition}
\newtheorem{corollary}[theorem]{Corollary}
\theoremstyle{definition}
\newtheorem{definition}[theorem]{Definition}
\newtheorem{example}[theorem]{Example}

\newtheorem{remark}[theorem]{Remark}

\newcommand{\defn}[1]{\emph{\color{blue} #1}}

\newcommand\bg{\mathbf{g}}
\newcommand\bk{\mathbf{k}}

\newcommand\bx{\mathbf{x}}

\newcommand\bzero{\mathbf{0}}

\newcommand\bbR{\mathbb{R}}

\newcommand\R{\mathbb{R}}

\newcommand\calB{\mathcal{B}}

\newcommand\calF{\mathcal{F}}

\newcommand\calS{\mathcal{S}}
\newcommand\calT{\mathcal{T}}

\newcommand\conv{\mathrm{conv}}
\newcommand\pull{\mathrm{pull}}
\newcommand{\DKK}{\mathrm{DKK}}

\newcommand\In{\mathrm{In}}
\newcommand\inedge{\mathrm{in}}
\newcommand\indeg{\mathrm{indeg}}
\newcommand\Out{\mathrm{Out}}
\newcommand\outedge{\mathrm{out}}
\newcommand\outdeg{\mathrm{outdeg}}

\newcommand\Path{\mathrm{Path}}
\newcommand\Cycle{\mathrm{Cycle}}
\newcommand\north{\mathrm{N}}
\newcommand\south{\mathrm{S}}
\newcommand\east{\mathrm{E}}
\newcommand\west{\mathrm{W}}
\newcommand\NW{\mathrm{NW}}
\newcommand\NE{\mathrm{NE}}
\newcommand\SW{\mathrm{SW}}
\newcommand\SE{\mathrm{SE}}

\newcommand\squarette{\scalebox{0.75}{\hbox{$\,\square\,$}}}

\title{Locally anti-blocking $\bg$-polytopes for flow polytopes}

\author[Berggren]{Jonah Berggren}
\address{Department of Mathematics\\
         University of Kentucky
}
\email{jrberggren@uky.edu}
\urladdr{https://sites.google.com/view/jonahberggrenmath/}

\author[Braun]{Benjamin Braun}
\address{Department of Mathematics\\
         University of Kentucky
}
\email{benjamin.braun@uky.edu}
\urladdr{https://sites.google.com/view/braunmath/}

\author[Cornejo]{Alvaro Cornejo}
\address{Department of Mathematics\\
         University of Kentucky
}\email{alvaro.cornejo@uky.edu}
\urladdr{http://cornejomath.github.io/}

\author[McElroy]{James Ford McElroy}
\address{Lexington, KY
}
\urladdr{https://www.linkedin.com/in/james-ford-mcelroy/ }

\author[Napier]{Chloe' Napier}
\address{Department of Mathematics\\
         University of Kentucky
}
\email{Chloe.Napier@uky.edu}
\urladdr{https://math.as.uky.edu/users/cmma241}

\author[Peterson]{Zachery Peterson}
\address{Department of Mathematics\\
         William and Mary\\
}
\email{ ztpeterson@wm.edu}
\urladdr{https://www.wm.edu/as/mathematics/faculty-directory/peterson\_z.php}

\author[Rizer]{Williem Rizer}
\address{Department of Mathematics\\
         University of Kentucky
}
\email{williem.rizer@uky.edu}
\urladdr{https://sites.google.com/view/williemrizer/home}

\author[Serhiyenko]{Khrystyna Serhiyenko}
\address{Department of Mathematics\\
         University of Kentucky
}
\email{khrystyna.serhiyenko@uky.edu}
\urladdr{https://math.as.uky.edu/users/kse246}

\author[Yip]{Martha Yip}
\address{Department of Mathematics\\
         University of Kentucky
}
\email{mpyi222@uky.edu}
\urladdr{https://www.ms.uky.edu/~myip/}

\date{25 May 2026}

\thanks{
BB, AC, JFM, and WR were partially supported by NSF award DMS-1953785.
BB and AC were partially supported by NSF award DMS-2450299.
KS, JB, ZP, and CN were partially supported by NSF award DMS-2054255.
KS and JB were partially supported by NSF award DMS-2451909.
This work was supported by a grant from the
Simons Foundation International [SFI-MPS-TSM-00013650, KS].
MY was partially supported by Simons Collaboration grant 964456.
}

\begin{document}

\begin{abstract}
    Given an acyclic directed graph (DAG), the space of strength one flows is a lattice polytope called the flow polytope of the DAG.
    If the DAG admits an ample framing, then the flow polytope is Gorenstein and it linearly projects onto a reflexive polytope called the $\mathbf{g}$-polytope.
    We provide a combinatorial characterization of amply framed DAGs that have a locally anti-blocking $\mathbf{g}$-polytope, and we characterize the minimal faces of the $\mathbf{g}$-polytope containing a fixed pair of vertices.
    We prove in this case that the unimodular triangulation of the $\mathbf{g}$-polytope induced by the DKK triangulation of the flow polytope is a pulling triangulation, and we characterize the pulling orders that yield the DKK triangulation.
    To prove our results, we introduce and study coherence diagrams, a combinatorial model of coherence for amply framed DAGs with locally anti-blocking $\mathbf{g}$-polytopes.
    We conclude by indicating possible extensions of these results to the setting of $\mathbf{g}$-polytopes for gentle Nakayama algebras.
\end{abstract}

\maketitle

\section{Introduction}\label{sec:intro}

\subsection{Context and motivation}

Given an acyclic directed graph (DAG) $G$, the space of flows of strength one on $G$ is a lattice polytope called the \emph{flow polytope} of $G$, denoted $\calF_1(G)$.
Algebraic, geometric, and combinatorial properties of flow polytopes have been the subject of intense investigation over the past several decades, with particular emphasis on subdivisions and triangulations~\cite{BGHHKMY19,berggren2026framingtriangulationsframingposets,berggrenframing,BraunCornejo,DKK12,permutationflows1}, volume and Ehrhart series formulas~\cite{BV08,gensnakeposets,bipartiteflowextensions,braunmcelroy,cryoriginal,GHMY,kms2021,pitmanstanleygeneralized,pitmanstanleyvolumelattice,lidskiiformula,LMS19,LMM16,MM19,MMS19,moralesshi2021,pitmanstanleyoriginal}, lattices arising from dual graphs of unimodular triangulations~\cite{FramingLattices,BGMY23,GMPTY25}, toric algebra~\cite{domokos_equations_toric_2016,HILLE2003215,RietschWilliams_Flow_Toric_2025}, and connections with representation theory of gentle algebras~\cite{gfan_uqam_arxiv,Polytopes2,Berggren,berggrenturbulence,berggrenserhiyenko}.
A key tool in many of these works is a family of regular unimodular triangulations of $\calF_1(G)$ introduced by Danilov, Karzanov, and Koshevoy known as \emph{DKK triangulations}~\cite{DKK12} that are defined by the combinatorial data given by a \emph{framing} of $G$.

The class of \emph{amply framed} DAGs has played an important role in recent work due to strong connections between these objects and $\tau$-tilting theory for gentle algebras~\cite{gfan_uqam_arxiv,Polytopes2,Berggren,berggren2026framingtriangulationsframingposets,berggrenserhiyenko}.
Flow polytopes for DAGs that admit an ample framing are examples of \emph{Gorenstein} lattice polytopes, which have an extensive history and which play important roles in commutative algebra~\cite{hibidepthreflexive,specialsimplicestoricrings}, combinatorics~\cite{Athanasiadis,beckfreesum,braundavissolus,BrunsRomer,chapotonqgorenstein,rootsgorensteinfano,gorensteingraphicmatroids,gorensteinlecturehall,facetsvolumegorenstein,hibireflexiveposets,gorensteintrinomial,gorensteinmatroids,lorenznill,kohlolsensanyal,nillvolumereflexive,nillschepers,ohsugigorensteincut,tsuchiyagorensteinsimplices,tsuchiya2026classificationcountinggorensteinsimplices,tsuchiya2026gorensteinsimplicesbinaryselfcomplementary}, and algebraic geometry~\cite{batyrev,coxmirrorsurvey}.
Gorenstein flow polytopes admit several characterizations related to the combinatorics of the defining DAG $G$~\cite{Polytopes2,BraunCornejo}.
More generally, Gorenstein polytopes include the special class of \emph{reflexive} polytopes, which originally arose in the context of mirror symmetry~\cite{batyrev}.
Bruns and R\"omer proved that if a Gorenstein polytope has the integer decomposition property, which is an arithmetic property of the polytope, then it admits (at least one) projection onto a reflexive polytope~\cite{BrunsRomer}.
The resulting pair of polytopes share many important combinatorial, geometric, and algebraic properties.
Lattice polytopes with unimodular triangulations always have the integer decomposition property, and thus every Gorenstein flow polytope $\calF_1(G)$ admits such a projection.
For a particular choice of projection, the resulting reflexive polytope $P$ is known as the $\bg$-polytope for $G$, due to the fact that the face structure of $P$ is the same as that of the $\bg$-vector fan for the gentle algebra corresponding to $G$.

In the setting of combinatorial optimization, Fulkerson introduced the class of anti-blocking polytopes~\cite{fulkersonblockingantiblocking,fulkersonantiblocking}, which extend to the class of \emph{locally anti-blocking} polytopes.
Anti-blocking and locally anti-blocking polytopes have been the subject of recent study in Ehrhart theory and polyhedral geometry~\cite{sanyalgeometricantiblocking,kohlolsensanyal,ohsugitsuchiyalocallyantiblocking,RR}.
Locally anti-blocking reflexive polytopes were characterized by Kohl, Olsen, and Sanyal~\cite{kohlolsensanyal}, and this characterization involves compressed polytopes and pulling triangulations.
Given a family of reflexive polytopes, it is therefore of interest to classify those that are locally anti-blocking and to study their pulling triangulations and geometric structure.
This is the motivation for the present work.

\subsection{Our contributions}

Our goal is to study geometric and combinatorial properties of locally anti-blocking $\bg$-polytopes of amply framed DAGs.
Our main contributions are:
\begin{enumerate}
    \item Theorem~\ref{thm:lab-dag-characterization} proves that a connected DAG admits an ample framing with a locally anti-blocking $\bg$-polytope if and only if the inner graph of the DAG is a path or a cycle; we denote such DAGs by $\Path(\bk)$ and $\Cycle(\bk)$, respectively.
    \item Corollary~\ref{cor.2framings} characterizes the ample framings of $\Path(\bk)$ and $\Cycle(\bk)$ that yield a locally anti-blocking $\bg$-polytope, which we call locally anti-blocking ample framings.
    \item Corollary~\ref{cor:incoherence-square} characterizes the minimal face of the $\bg$-polytope containing the $\bg$-vectors for a pair of routes in $G$.
    \item Lemma~\ref{lem:thecoherencelemma} gives a combinatorial model encoding coherence for pairs of routes in $\Path(\bk)$ or $\Cycle(\bk)$ when equipped with a locally anti-blocking ample framing.
    \item Corollary~\ref{cor:characterizing-pull-coherent-routes} characterizes pulling sequences for $\bg$-polytopes that yield the DKK triangulation, and Theorem~\ref{thm:dkkpulling} establishes that the DKK triangulation of a $\bg$-polytope is a pulling triangulation.
\end{enumerate}

The remainder of this paper is structured as follows.
In Section~\ref{sec:background}, we provide background regarding pulling triangulations, Gorenstein and reflexive polytopes, locally anti-blocking polytopes, flow polytopes, and $\bg$-polytopes.
In Section~\ref{sec:labdags}, we introduce the DAGs $\Path(\bk)$ and $\Cycle(\bk)$ and use them to characterize the amply framed DAGs with locally antiblocking $\bg$-polytopes.
In Section~\ref{sec:pulling}, we study the minimal face of the $\bg$-polytope containing a fixed pair of vertices, introduce route pair notation for a locally anti-blocking amply framed DAG, introduce the combinatorial tool of coherence diagrams and establish their properties, and characterize the pulling triangulations of the $\bg$-polytope that agree with the DKK triangulation.
We conclude with Section~\ref{sec:furtherdirections}, where we indicate possible extensions of these results to the setting of $\bg$-polytopes for gentle Nakayama algebras.

\section{Background}\label{sec:background}

Unless specified otherwise, throughout this work we assume that all polytopes are lattice polytopes, where a \defn{lattice polytope} is a convex polytope with vertices contained in the integer lattice.

\subsection{Pulling triangulations and compressed polytopes}\label{sec:pulling-triangulations}

A pulling triangulation~\cite{triangulationshandbook} of a convex polytope $P$ is defined by recursively refining subdivisions of $P$.
At each step we select a point $x\in P$ and \defn{pull} at $x$ by subdividing each cell $S$ of the subdivision containing $x$ into the pyramids $\conv(f,x)$ for each face $f$ of $S$ that does not contain $x$.
In this section, Theorems~\ref{thm:pulling1} and~\ref{thm:iteratedpulling} depend only on this general definition of pulling and make no special considerations for points on the integer lattice.
Thus, for our initial discussion of pulling and subdivisions, we will not require $P$ to be a lattice polytope.

\begin{definition}
    Let $P$ be a $d$-dimensional polytope. A \defn{subdivision} of $P$ is a finite collection $\calS = \{S_1,\dots,S_m\}$ of $d$-dimensional polytopes such that
\begin{enumerate}
    \item $P = \cup_{i=1}^m S_i$, and
    \item for $i\neq j$, $F:=S_i \cap S_j$ is a common (possibly empty) proper face of $S_i$ and $S_j$.
\end{enumerate}     
    Let $x,y\in P$. 
    We say $y$ is \defn{cell-neighboring} to $x$ in a subdivision $\calS$ if there is some cell $S_i$ containing both of them.
    If no such cell exists, we say that $x$ and $y$ are \defn{separated} in $\calS$.
    A subdivision $\mathcal{S'}$ \defn{refines} a subdivision $\calS$ if each cell of $\calS'$ is contained in some cell of $\calS$.
    A \defn{triangulation} is a subdivision $\calT$ in which each cell is a simplex.
\end{definition}

A recursive process to produce subdivisions and triangulations is the pulling process, defined as follows.

\begin{definition}
    Suppose $\calS=\{S_1,\dots,S_m\}$ is a subdivision of $P$, with $x \in P$.
    The result of \defn{pulling $\calS$ at $x$}, denoted $\pull(\calS; x) =: \calS'$, is the refinement of $\calS$ defined by modifying each $S_i$ as follows.
    \begin{itemize}
        \item If $x\notin S_i$, then $S_i\in\calS'$.
        \item If $x\in S_i$, then for every facet $F$ of $S_i$ not containing $x$, we have $\conv(F,x)\in\calS'$.
    \end{itemize}

    For any sequence $\bx = (x^1,\dots, x^n)$ of points in a polytope $P$, we denote the result of successively pulling at each point as $\pull(P;\bx) \coloneq \pull( \dots \pull(P; x^1) \dots; x^n)$, and we call $\bx$ the \defn{pulling order}.
    If $\calS = \pull(P;\bx)$, then we denote the $i^{\text{th}}$ subdivision $\calS^i := \pull(P;(x^1,\dots,x^i))$, with $\calS^0 := \{P\}$.
    Any subdivision arising in this way is called a \defn{pulling subdivision}, or a \defn{pulling triangulation} if it is a triangulation of $P$.
\end{definition}

\begin{remark}\label{remark:face_pulling}
More explicitly the face structure of a pulling subdivision is given by the following.
Suppose $x\in S_i$ and $F$ is a facet of $S_i$ not containing $x$.
Then by the definition of pulling at $x$, we introduce a new cell $S':= \conv(F,x)$.
The faces of $S'$ are then given by $f$ and $\conv(f,x)$ for each face $f$ of $F$.
That is to say, pulling at $x$ subdivides each $k$-dimensional face $f$ containing it into the $k$-dimensional faces defined by $\conv(f',x)$ for each $(k-1)$-dimensional face $f'\subseteq f$ not containing $x$.
\end{remark}

Note that if $x$ is the first pull-point in a pulling triangulation, it will be in every simplex of the resulting pulling triangulation.
In fact, the following is an equivalent way to view pulling (as shown in the next few theorems): given a subdivision $\calS = \{S_1,\dots,S_m\}$ of $P$, consider all triangulations of $P$ that refine $\calS$.
By pulling at a point $x$, we are precisely narrowing down to those triangulations for which $x$ is in every simplex of each cell $S_i$ containing $x$.
The following theorem gives a characterization for when two points are separated in a pulling subdivision of a polytope.

\begin{theorem}\label{thm:pulling1} 
Suppose $u$ and $v$ are vertices of a subdivision $\calS$ of a polytope $P$, with $u$ and $v$ cell-neighboring, i.e., $u,v \in S\in \calS$. 
Let $\calS' = \pull(\calS;x)$ for some $x\in P$.
Then $u$ and $v$ are separated in $\calS'$ if and only if $x\neq u$, $x\neq v$, and $x$ is in the minimal face (by containment) containing both $u$ and $v$.
\end{theorem}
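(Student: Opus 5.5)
Write $f$ for the minimal face containing $u$ and $v$. I read $f$ as a face of the polyhedral complex $\calS$, not of $P$. It is well defined: if $u,v$ lie in cells $S,T$, then $S\cap T$ is a common face of both that contains $u$ and $v$. Hence the minimal face of $S$ containing $u,v$ equals the minimal face of $T$ containing them, namely the intersection of all faces of the complex containing both points. In particular, every face $g$ of $\calS$ with $u,v\in g$ satisfies $f\subseteq g$.

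The whole argument will use the description of faces of $\calS'=\pull(\calS;x)$ given in Remark~\ref{remark:face_pulling}. Every face of $\calS'$ is of one of two types:
\begin{itemize}
\item an old face $g$ of $\calS$ with $x\notin g$;
\item a pyramid $\conv(g,x)$, where $g$ is a face, not containing $x$, of a cell containing $x$.
\end{itemize}
Faces of $\calS$ that contain $x$ are replaced by pyramids and so do not survive. Two points are cell-neighboring in $\calS'$ exactly when some face of $\calS'$ contains both.

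For the reverse direction, assume $x\in f$ and $x\neq u,v$, and suppose toward a contradiction that a face $h$ of $\calS'$ contains $u$ and $v$.
\begin{itemize}
\item If $h=g$ is an old face with $x\notin g$, then $f\subseteq g$ by minimality. This gives $x\in g$, a contradiction.
\item If $h=\conv(g,x)$, with $g$ a face of a cell $T\ni x$ and $x\notin g$, then $u$ is a vertex of $T$ lying in the subpolytope $\conv(g,x)\subseteq T$. Since $u$ is extreme in $T$, it is extreme in $\conv(g,x)$, so $u$ is $x$ or a vertex of $g$. As $u\neq x$, we get $u\in g$, and likewise $v\in g$. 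Now $g$ is a face of the complex containing $u,v$, so again $f\subseteq g\ni x$, a contradiction.
\end{itemize}
The one point needing care is this vertex argument; it uses that $u,v$ are vertices of $\calS$, not merely points.

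For the forward direction we argue by contraposition and show that $u,v$ remain cell-neighboring in each of three cases.
\begin{itemize}
\item Suppose $x=u$ (the case $x=v$ is symmetric). Take a cell $S\ni u,v$; then $\{v\}$ is a face of $S$ not containing $x$. By Remark~\ref{remark:face_pulling}, the edge $\conv(\{v\},u)$ is a face of $\calS'$, so it lies in some cell of $\calS'$ and $u,v$ are not separated.
\item Suppose $x\notin f$ and $x$ lies in no cell containing $f$. Then every such cell survives unchanged in $\calS'$.
\item Suppose $x\notin f$ and $x\in S$ for some cell $S\supseteq f$. Since $f$ is the intersection of the facets of $S$ containing it and $x\notin f$, some facet $F\supseteq f$ of $S$ avoids $x$. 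Then $\conv(F,x)\in\calS'$ contains $u$ and $v$.
\end{itemize}
In all three cases $u$ and $v$ are not separated in $\calS'$, which completes the plan. I expect the main obstacle to be the well-definedness of $f$ and the vertex-extremality step in the reverse direction; the remaining steps are direct from the definitions.
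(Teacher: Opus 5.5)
Your proposal is correct and takes essentially the same route as the paper's proof. The forward direction goes by contraposition through the same cases: $x\in\{u,v\}$, $x$ outside the relevant cells, and $x$ in a cell but not in $f$, handled with a pyramid $\conv(F,x)$. The reverse direction goes by contradiction using the face description of Remark~\ref{remark:face_pulling} and the minimality of $f$. Your extra care fills in steps the paper's terse reverse direction leaves implicit, namely checking that $f$ is well defined across cells and using extremality of $u,v$ to force them into the base $g$ of $\conv(g,x)$.
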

    \begin{proof}
    Let $f$ be the minimal face of $\calS$ containing both $u$ and $v$.   
    First, we show the forward direction by the contrapositive. 
    Note that if $x\in\{u,v\}$, then the edge $(u,v)$ exists in $\calS'$, meaning $u$ and $v$ are not separated.
    Now suppose $x\notin f$.
        If $x\notin S$, then $S$ remains in the subdivision, giving a cell containing $u$ and $v$.
        If $x\in S$, then since $x\notin f$, we have $\conv(f,x)$ is a face of some cell $S'\in \calS'$, implying that $S'$ contains both $u$ and $v$.
        Hence $u$ and $v$ are not separated.

        For the reverse implication, let $x\in f$, where $x\notin \{u,v\}$, and suppose that pulling at $x$ does not separate $u$ and $v$.
        In other words, there is some $S'\in \calS'$ that contains both $u$ and $v$.
        But considering Remark \ref{remark:face_pulling}, since $u$ and $v$ are vertices of the subdivision, there must have been some face $f'\subseteq f\subseteq S'$ that contained $u$ and $v$ but not $x$, contradicting that $f$ is minimal.
    \end{proof}

The following theorem gives a condition by which we can ensure that iterated pulling subdivisions are refined by the same triangulation.

\begin{theorem}\label{thm:iteratedpulling}
    Let $\calS=\{S_1,\dots,S_m\}$ be a subdivision of $P$ and let $\calT$ be a triangulation refining $\calS$. Then $\calT$ also refines $\calS':=\pull(\calS;v)$ for a vertex $v$ if and only if $v$ is a vertex of $\calT$ such that for every cell $S\in \calS$ containing $v$ we have $v\in T$ for every simplex $T\subseteq S$ in $\calT$.
\end{theorem}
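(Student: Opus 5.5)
The plan is to prove both directions by working one cell at a time. Refining a subdivision is a cellwise condition, and pulling at $v$ only changes cells containing $v$. So it suffices to fix a cell $S\in\calS$ and compare the simplices $T\in\calT$ with $T\subseteq S$ against the new cells $\conv(F,v)$ produced from $S$. Here $F$ ranges over the facets of $S$ not containing $v$. Cells of $\calS$ that do not contain $v$ persist unchanged in $\calS'$, so any $T$ inside such a cell is automatically contained in a cell of $\calS'$.

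For the reverse direction, assume $v$ is a vertex of $\calT$ and lies in every simplex $T\subseteq S$ of $\calT$, for every cell $S\ni v$. Fix such a $T$ and write $T=\conv(v,T_0)$, where $T_0$ is the facet of $T$ opposite $v$. I will show that $T_0$ lies in the boundary of $S$, on a facet $F$ of $S$ with $v\notin F$.
\begin{itemize}
\item Since $v\in S$ is a vertex of $\calT$ lying in the cell $S$, and $S$ is convex, $v$ is a point of $S$.
\item Take a point $y$ in the relative interior of $T_0$ and consider the ray from $v$ through $y$.
\item If $T_0$ met the interior of $S$, a simplex of $\calT$ across $T_0$ from $T$ would also lie in $S$. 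That simplex would have to contain $v$, yet $v$ lies on the wrong side of the hyperplane spanned by $T_0$.
\end{itemize}
Hence $T_0\subseteq\partial S$. Then $T_0$, being a simplex of the triangulation on the boundary, lies in a single facet $F$ of $S$. This follows because $\calT$ restricts to a triangulation of $\partial S$ refining its facet structure, and $T_0$ is full-dimensional in $\partial S$. Moreover $v\notin F$, since otherwise $T=\conv(v,T_0)\subseteq F$ would be lower-dimensional. Therefore $T\subseteq\conv(F,v)\in\calS'$.

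For the forward direction, assume $\calT$ refines $\calS'$. First, $v$ is a vertex of $\calS'$ whenever it lies in some cell, because it is the apex of the pyramids $\conv(F,v)$. Every vertex of a subdivision that $\calT$ refines must be a vertex of $\calT$: otherwise $v$ would lie in the relative interior of some simplex face of $\calT$, contained in a cell of $\calS'$ having $v$ as a vertex, which is impossible. Next, take a cell $S\ni v$ and a simplex $T\subseteq S$ of $\calT$. Then $T$ lies in some cell of $\calS'$ contained in $S$, namely some $\conv(F,v)$. Every simplex of $\calT$ inside the pyramid $\conv(F,v)$ must contain the apex $v$, which I would check as follows.
\begin{itemize}
\item Suppose $T\subseteq\conv(F,v)$ but $v\notin T$.
\item Points of the triangulation near $v$ inside the pyramid are covered by simplices containing $v$. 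Since $v$ is a vertex of $\calT$, its star covers a neighborhood of $v$ in $P$.
\item I would therefore aim to show that the facets of $T$ project radially from $v$ onto $F$.
\end{itemize}
This last step is the main obstacle. A cleaner route is to use that the vertices of $\calT$ lying in $\conv(F,v)$ are exactly the points of $F$ together with $v$, with no other vertices in the interior. Indeed, pulling uses only existing vertices, and if $\calT$ had extra vertices the claim could fail. So I expect to need the implicit convention that $\calT$ uses the vertex set of $\calS$ (plus possibly lattice points on faces), or else to handle this via the minimal-face characterization of Theorem~\ref{thm:pulling1}. With only vertices of $F$ and $v$ available, a full-dimensional simplex inside the pyramid that avoids $v$ would have all its vertices in $F$ and so be degenerate, which completes the argument.
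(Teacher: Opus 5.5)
Your reverse direction is the paper's argument. The paper simply asserts that the facets $T\setminus\{v\}$ triangulate the part of $\partial S$ lying in facets that avoid $v$. You supply the missing reason: if $T_0$ were interior to $S$, the simplex of $\calT$ on its far side would lie in $S$ but could not contain $v$.

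On the forward direction your suspicion is correct, and you should commit to it rather than hedge. As literally stated, that direction is false, so no appeal to Theorem~\ref{thm:pulling1} can prove it in general. Take a triangle $S=\conv(v,a,b)$ with $\calS=\{S\}$, so that $\pull(\calS;v)=\{S\}$.
\begin{itemize}
\item For an interior point $c$, the triangulation $\calT=\{\conv(v,a,c),\conv(v,b,c),\conv(a,b,c)\}$ refines $\{S\}$, but $\conv(a,b,c)$ misses $v$.
\item For the midpoint $m$ of $va$, the triangulation $\{\conv(v,m,b),\conv(m,a,b)\}$ refines $\{S\}$, but $\conv(m,a,b)$ misses $v$.
\end{itemize}
The second example shows that your parenthetical ``plus possibly lattice points on faces'' is too permissive. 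The right hypothesis is that no vertex of $\calT$ lies in $\conv(F,v)\setminus(F\cup\{v\})$, for instance that $\calT$ and $\calS$ have the same vertices. This holds in the paper's application: once the origin has been pulled, the subdivision and the DKK triangulation have the same vertex set.

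Under that hypothesis your argument is correct, and it takes a different and better route than the paper's.
\begin{itemize}
\item The paper argues that $\pull(\calS;v)$ has an edge from $v$ to each vertex $u$ of $S$, so $\calT$ has these edges too, and concludes that $v$ lies in every simplex of $\calT$ inside $S$.
\item The middle step tacitly uses the same vertex hypothesis, since it needs $\calT$ to have no vertex in the interior of an edge $uv$.
\item The final inference does not follow from the edges alone. Take the bipyramid over a triangle $vwz$ with apexes $x,y$. Every other vertex is joined to $v$ by a boundary edge, yet the triangulation $\{\conv(x,y,v,w),\conv(x,y,w,z),\conv(x,y,z,v)\}$ contains $\conv(x,y,w,z)$, which misses $v$.
\end{itemize}
Your argument instead uses the full strength of refinement. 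Each $T\subseteq S$ lies in a single pyramid $\conv(F,v)$, whose only available vertices are $v$ and points of $F$, so a full-dimensional $T$ must use $v$. The exploratory bullets about radial projection can be dropped.
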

    \begin{proof}
        We first prove the forward implication. Suppose that $\calS'$ is refined by $\calT$. Since $v\in\calS'$ by the definition of pulling, we must have $v\in\calT$ by the definition of refinement. Now let $S\in\calS$ with $v\in S$. By the definition of pulling, $\calS'$ contains the edge $(u,v)$ for each vertex $u\in S$, $u\neq v$. Since $\calT$ refines $\calS'$, $\calT$ also contains the edge $(u,v)$. Because $\calT$ contains the edge $(u,v)$ for each $u\in S$ with $u\neq v$, $v$ must lie in every simplex $T\in\calT$ with $T\subseteq S$. 
        
        We now prove the reverse implication. Consider any cell $S\in \calS$ with $v\in S$. Since $\calT$ refines $\calS$, $S$ is triangulated by
        $\calT' \coloneq \calT\vert_S =: \{T_1',\dots, T_n'\}$.
        Assuming that $v$ is in every simplex of $\calT'$, and letting $\calB$ denote the boundary complex of $S$ minus any facet containing $v$, we see that the collection $\{ B_i'\coloneq T_i'\setminus\{v\}\}_{i=1}^n$ is a triangulation of $\calB$ (where $T_i'\setminus\{v\}$ is shorthand for the convex hull of the vertex set of $T_i'$ except for $v$).
        This means that every $(d-1)$-dimensional simplex $B_i'$ is contained in some facet $f$ of $S$ with $v\notin f$.
        Hence, by definition, the $d$-dimensional simplex $T_i = \conv(B_i',x) \in \calT$ is contained in the cell $\conv(f,x)\in\calS'$.
        Applying this to every cell containing $v$, we find $\calT$ refines $\calS'$.
    \end{proof}

Returning to the setting of lattice polytopes, there are two types of pulling (weak and strong), but in the case where every lattice point in a lattice polytope is a vertex of a subdivision that is being refined by a pulling, these two variants of pulling agree~\cite{unimodulartriangulationssurvey}.
In this paper we will be applying pullings in the special case described above, and therefore we can ignore the distinction between weak and strong pulling.

For lattice polytopes where the vertices of a triangulation are contained in the lattice points in the polytope, the following class of triangulations is of particular importance.

\begin{definition}
    A \defn{unimodular triangulation} of a lattice polytope is a triangulation such that every cell is unimodular; that is, each cell is lattice equivalent to the standard simplex obtained as the convex hull of the origin and the standard basis vectors.
\end{definition}

\begin{definition}
    A lattice polytope is \defn{compressed} if all its pulling triangulations are unimodular. 
\end{definition}

The following characterization of compressed polytopes has multiple proofs, originally by Santos~\cite{unimodulartriangulationssurvey}, with subsequent proofs by Ohsugi and Hibi~\cite{ohsugihibicompressed} and Sullivant~\cite{sullivantcompressed}.

\begin{theorem}
    Let $P$ be a lattice polytope. The following are equivalent:

    \begin{enumerate}
        \item $P$ is compressed.
        \item $P$ has width one with respect to all its facets.
        \item $P$ is lattice equivalent to the intersection of a unit cube with an affine space.
    \end{enumerate}
\end{theorem}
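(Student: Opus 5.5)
The plan is to establish the cycle of implications (3)$\Rightarrow$(2)$\Rightarrow$(1)$\Rightarrow$(2)$\Rightarrow$(3). Write $P=\{x : \langle a_i,x\rangle\le b_i\}$ irredundantly, with each $a_i$ primitive in the dual lattice of the affine lattice of $P$. Width one with respect to a facet $F_i$ means $\max_P\langle a_i,x\rangle-\min_P\langle a_i,x\rangle=1$. Equivalently, every lattice point of $P$ lies either on $F_i$ or on the parallel lattice hyperplane one step inside.

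\textbf{(3)$\Rightarrow$(2).} Suppose $P=[0,1]^n\cap L$ for an affine subspace $L$. Every facet of $P$ is cut out by some coordinate inequality $x_j\ge0$ or $x_j\le1$, and $x_j$ takes values only in $[0,1]$ on $P$. One checks that the restriction of $x_j$ to the lattice $L\cap\mathbb{Z}^n$ is a primitive functional up to an additive constant. This holds because $P$ has lattice points with both $x_j=0$ and $x_j=1$: otherwise the facet would be all of $P$. Hence the width is one.

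\textbf{(2)$\Rightarrow$(1).} Take any pulling triangulation $\calT$ and induct on dimension, following the recursive structure of Remark~\ref{remark:face_pulling}. Each maximal simplex of $\calT$ has the form $\conv(T',x)$, where $x$ is the first pulled lattice point in the relevant cell and $T'$ is a simplex of the induced pulling triangulation of a facet $F$ not containing $x$. Faces of $P$ inherit width one with respect to their own facets, so by induction $T'$ is unimodular in the lattice of $\mathrm{aff}(F)$. Width one says $x$ lies at lattice distance exactly one from $\mathrm{aff}(F)$, so $\conv(T',x)$ is unimodular. The care needed here is that the cells of intermediate subdivisions are not faces of $P$. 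However, every facet $F$ of a cell not containing the pulled point is contained in a face of $P$ (pulling only cones over boundary pieces), and the apex lies in $P$ at lattice height $\le1$ above that face's supporting hyperplane. I will state the induction in terms of faces of $P$ to keep this clean.

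\textbf{(1)$\Rightarrow$(2).} Argue by contrapositive. If some facet $F_i$ has width $w\ge2$, choose a lattice point $x\in P$ with $\langle a_i,x\rangle=b_i-w$ maximizing distance. Pull first at all lattice points of $F_i$ (which triangulates $F_i$ by some pulling), and only afterwards at $x$. Equivalently, choose an order in which the lattice points of $F_i$ come last so that $F_i$ becomes a base. Some simplex then has the form $\conv(T',x)$ with $T'\subseteq F_i$, and its normalized volume is at least $w\ge2$. Hence the triangulation is not unimodular. The key care is ensuring $x$ actually cones over a simplex in $F_i$; pulling at $x$ first guarantees this, since then every maximal cell contains $x$ and those meeting $F_i$ in a facet are cones over it.

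\textbf{(2)$\Rightarrow$(3).} The map $x\mapsto(\langle a_i,x\rangle-(b_i-1))_i$ sends $P$ into $[0,1]^m$ by width one. Its image is the intersection of the cube with the affine image of $\mathrm{aff}(P)$, since the inequalities defining $P$ are exactly the $\le1$ constraints and the $\ge0$ constraints are redundant.

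The main obstacle is lattice equivalence, meaning that the map is unimodular onto its image lattice. Injectivity follows since the $a_i$ span. Surjectivity onto the lattice points of the image requires that integrality of all facet values forces integrality of $x$. I would get this by intersecting with the vertex cone at a lattice vertex and using unimodularity from (1) (already available from (2)$\Rightarrow$(1)). A unimodular simplex at that vertex has edge directions forming a lattice basis on which the $a_i$ take integer values, giving the needed duality. This step is where I expect the real work.
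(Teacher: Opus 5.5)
The paper gives no proof of this theorem; it quotes it from Santos, Ohsugi--Hibi and Sullivant. Your outline therefore has to stand on its own, and its overall shape is the standard one.

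Your (3)$\Rightarrow$(2) is fine, and so is (2)$\Rightarrow$(1), provided you add two things: the flag/cone description you use is valid for pulling at vertices, and under (2) every lattice point of $P$ is a vertex.

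Your (1)$\Rightarrow$(2) paragraph proposes three incompatible pulling orders, and only the last one works.
\begin{itemize}
\item If the first pulled point lies in $F_i$, it lies in every maximal simplex and $F_i$ is never the base of a cell. So no pyramid $\conv(T',x)$ with $T'\subseteq F_i$ need appear.
\item Putting the points of $F_i$ last does not help either, if the first pulled point has height one.
\item What works is to pull a vertex $x$ at height $w$ first, and afterwards only vertices of $P$. A vertex of $P$ lying in $\conv(F_i,x)$ is $x$ or a vertex of $F_i$, so this cell is refined into pyramids $\conv(T',x)$ over a triangulation of $F_i$. Each has normalized volume at least $w\ge 2$.
\end{itemize}
Restricting to vertices is essential. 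With the paper's geometric pulling one may pull at every lattice point, and for a lattice polygon that always yields a unimodular triangulation.

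The genuine gap is the lattice-equivalence step of (2)$\Rightarrow$(3). Let $\Lambda$ be the lattice parallel to $\mathrm{aff}(P)$. You need the $a_i$ to generate $\Lambda^*$, so that an integral image under $\psi\colon x\mapsto(\langle a_i,x\rangle-b_i+1)_i$ forces $x$ to be a lattice point. Your stated reason is that the edge directions of a unimodular simplex form a basis of $\Lambda$ on which the $a_i$ are integral. That holds for any lattice basis and any integral functionals, so it proves nothing.

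The missing idea is to choose the simplex and the normals together.
\begin{itemize}
\item A maximal simplex $\conv(z_0,\dots,z_d)$ of a vertex-pulling triangulation comes from a flag $P=G_0\supset G_1\supset\cdots\supset G_d=\{z_d\}$. Here $z_j$ is the first pulled vertex of $G_j$, and $G_{j+1}=G_j\cap F_{i_j}$ is a facet of $G_j$ avoiding $z_j$.
\item Width one gives $\langle a_{i_j},z_j-z_d\rangle=-1$. Also $\langle a_{i_j},z_k-z_d\rangle=0$ for $k>j$, because $z_k,z_d\in G_{j+1}$.
\item So this integer matrix is triangular with determinant $\pm1$. It is the product of the matrix of $a_{i_0},\dots,a_{i_{d-1}}$ in a basis of $\Lambda^*$ with the matrix of $z_0-z_d,\dots,z_{d-1}-z_d$ in a basis of $\Lambda$. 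Hence both factors are unimodular.
\end{itemize}
This reproves (2)$\Rightarrow$(1). It also shows that these $d$ facet normals form a basis of $\Lambda^*$, which is exactly the missing surjectivity of $\psi$ onto $\psi(\mathrm{aff}(P))\cap\mathbb{Z}^m$.
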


\subsection{Gorenstein and reflexive polytopes}

The polytopes arising in this work fall within the family of Gorenstein and reflexive polytopes, defined as follows.

\begin{definition}\label{def:reflexive}
    A $d$-dimensional lattice polytope $P$ is \defn{reflexive} if it contains an interior lattice point $v$ and there exists an integer matrix $A$ such that
    \[
        P - v = \{ x \in \mathbb{R}^d: A x \leq \mathbb{1}\}.
    \]
    A lattice polytope $P$ is \defn{Gorenstein} if there exists some $r \in \mathbb{N}$ such that $r P$ is reflexive.
\end{definition}

There is a robust theory describing projections of unimodularly-triangulated Gorenstein polytopes onto reflexive polytopes with shared geometric and arithmetical properties, due to Bruns and R\"omer~\cite{BrunsRomer}.
To review this theory, we first recall the definitions of a Gorenstein point and a special simplex.
Recall that for a lattice polytope $P$, the \defn{cone over $P$} is
\[
\mathrm{cone}(P):=\mathrm{span}_{\R_\geq 0}\{(1,v): v \in P\}\, .
\]
Recall also that for a full-dimensional convex body $K\subset \R^d$, we write $K^\circ$ to denote the interior of $K$.

\begin{proposition}
    A lattice polytope $P \subseteq \mathbb{R}^{d}$ is \defn{Gorenstein} if and only if there exists a point $c \in \mathbb{Z}^{d+1}$ such that $c + \mathrm{cone}(P) \cap \mathbb{Z}^{d+1}=\mathrm{cone}(P)^\circ \cap \mathbb{Z}^{d+1}$. 
    If such a point $c$ exists, we call it the \defn{Gorenstein point} for $P$.
\end{proposition}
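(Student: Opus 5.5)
We must prove that $P$ is Gorenstein (Definition~\ref{def:reflexive}) if and only if there is a lattice point $c\in\mathbb{Z}^{d+1}$ with
\[
c+\mathrm{cone}(P)\cap\mathbb{Z}^{d+1}=\mathrm{cone}(P)^\circ\cap\mathbb{Z}^{d+1}.
\]
Throughout, write $C=\mathrm{cone}(P)$ and use the grading $\deg(t,y)=t$. We assume $P$ is full-dimensional; otherwise we work in the affine lattice it spans. Write
\[
P=\{x: \langle a_i,x\rangle\le b_i\}
\]
in its unique irredundant form, with each $a_i$ a primitive integer vector and $b_i\in\mathbb{Z}$, since facets of lattice polytopes are rational. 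Then $C$ is cut out by the primitive integral functionals $\ell_i(t,y)=b_it-\langle a_i,y\rangle\ge 0$, one for each facet. Every lattice point $z\in C$ has $\ell_i(z)\in\mathbb{Z}_{\ge0}$, and $z$ is interior exactly when $\ell_i(z)\ge1$ for all $i$.

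\textbf{Reformulation.} The first step is to show that a lattice point $c$ satisfies the displayed equation if and only if $c\in C^\circ$ and $\ell_i(c)=1$ for every $i$.

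For sufficiency, suppose $\ell_i(c)=1$ for all $i$. If $z\in C\cap\mathbb{Z}^{d+1}$, then $\ell_i(z+c)\ge1$ for all $i$, so $z+c$ is interior. Conversely, if $w$ is an interior lattice point, then $\ell_i(w-c)=\ell_i(w)-1\ge0$, so $w-c\in C$.

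For necessity, $c=c+0$ is interior, so $\ell_i(c)\ge1$ for all $i$. Suppose $\ell_j(c)\ge2$ for some $j$. Because $a_j$ is primitive and the facet $F_j$ of $C$ is a rational cone of full dimension $d$ inside the hyperplane $\ell_j=0$, we can pick a lattice point $w$ with $\ell_j(w)=1$. We can also pick a lattice point $u$ deep in the relative interior of $F_j$. Then $w+Nu$ is interior for $N\gg0$, since $\ell_j(w+Nu)=1$ and the other $\ell_i(w+Nu)$ grow with $N$. However, $\ell_j(w+Nu-c)=1-\ell_j(c)<0$, so $w+Nu\notin c+C$. This contradiction forces $\ell_j(c)=1$ for all $j$. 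The existence of $w$ and $u$ uses primitivity together with the saturation of $\mathbb{Z}^{d+1}\cap\{\ell_j=0\}$. I expect this step to need the most care.

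\textbf{Connecting to reflexivity.} Write $c=(r,v)$. The conditions $\ell_i(c)=1$ read $b_ir-\langle a_i,v\rangle=1$, that is,
\[
rP-v=\{x:\langle a_i,x\rangle\le 1\}.
\]
Since $c\in C^\circ$, we have $r\ge1$, $v\in\mathbb{Z}^d$, and $v$ lies in the interior of $rP$. Thus $rP$ is reflexive with interior point $v$ and $A$ the matrix whose rows are the $a_i$.

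Conversely, suppose $rP$ is reflexive with interior lattice point $v$ and integer matrix $A$. The description $rP-v=\{Ax\le\mathbb 1\}$ can be taken irredundant. Each row is then forced to be primitive: if a row were $ka'$ with $k\ge2$, the corresponding facet would lie on $\langle a',x\rangle=1/k$, which contains no lattice points, contradicting that facets of a lattice polytope contain lattice vertices. Hence the rows are exactly the $a_i$, the right-hand sides satisfy $rb_i-\langle a_i,v\rangle=1$, and $c=(r,v)$ satisfies $\ell_i(c)=1$ for all $i$. By the reformulation, $c$ is a Gorenstein point.
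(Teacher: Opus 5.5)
The paper gives no proof of this proposition; it is recorded as a standard background fact. In the literature it usually comes from the algebraic side: the interior lattice points of $\mathrm{cone}(P)$ span the canonical module of the Ehrhart ring, and the displayed condition says that this module is principal.

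Your argument is a correct, self-contained polyhedral proof of the equivalence with the paper's definition (some dilate $rP$ is reflexive). The key reformulation is sound: $c$ is a Gorenstein point if and only if $\ell_i(c)=1$ for every primitive facet functional $\ell_i$ of $C$. Two small simplifications in the necessity step:
\begin{itemize}
\item You do not need any saturation argument. Since $a_j$ is primitive, so is $\ell_j=(b_j,-a_j)$, and this alone produces $w$.
\item You can take $u=\sum_v (1,v)$ explicitly, summing over the vertices $v$ of the $j$-th facet of $P$. By irredundancy it is positive on every $\ell_i$ with $i\neq j$.
\end{itemize}
The translation between ``$\ell_i(r,v)=1$ for all $i$'' and ``$rP-v=\{x:\langle a_i,x\rangle\le 1\}$'' is correct. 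So is your primitivity argument for the rows of an irredundant reflexive description.

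Compared with citing the algebraic characterization, your route gives concrete information that the paper uses implicitly later:
\begin{itemize}
\item Since $C$ is pointed, the conditions $\ell_i(c)=1$ determine $c$, so the Gorenstein point is unique.
\item The first coordinate of $c$ is exactly the dilation factor $r$, matching the $r$ summands $(1,y_i)$ in Theorem~\ref{thm:brunsromer}.
\item The remaining coordinates of $c$ give the interior lattice point of $rP$.
\end{itemize}
The algebraic route, by contrast, is what ties this notion to Gorenstein rings and to the palindromicity of $h^*$ used in the proof of Proposition~\ref{prop:joinfreesumgorenstein}.

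One point deserves to be made explicit. The paper applies this to flow polytopes, which are not full-dimensional. Your reduction should therefore use the lattice $\mathrm{aff}(P)\cap\mathbb{Z}^d$, equivalently $\mathrm{lin}(C)\cap\mathbb{Z}^{d+1}$, together with relative interiors. This is what intersecting with $\mathbb{Z}^{d+1}$ in the statement implicitly does.
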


\begin{definition}
    Given a $d$-dimensional lattice polytope $P$, a simplex $\Delta$ with vertices in $P \cap \mathbb{Z}^d$ is \defn{special} if $\Delta \cap F$ is a facet of $\Delta$ for all facets $F$ of $P$.
\end{definition}

Theorem~\ref{thm:brunsromer} below is the key structural result underlying the properties we describe later in this work for faces and triangulations of $\bg$-polytopes in relation to flow polytopes.
Note that the integer decomposition theorem mentioned in the theorem statement is a property of a lattice polytope that is implied by having a regular unimodular triangulation~\cite{unimodulartriangulationssurvey}.
Thus, the flow polytopes considered in this paper have the integer decomposition property.

\begin{theorem}[Bruns and R\"omer~\cite{BrunsRomer}]\label{thm:brunsromer}

If $P$ is a Gorenstein polytope with the integer decomposition property and Gorenstein point $c$, then there must exist a sum 
\[
c=(1,y_1)+(1,y_2)+\cdots+(1,y_r)
\]
with $(1,y_i) \in 1\times P$ for every $i$.
In this case, the simplex $\Delta:=\mathrm{conv}\{y_1,\ldots,y_r\}$ is a special simplex for $P$. 
Further, if $P$ admits a unimodular triangulation $T$, then there exists a simplicial sphere $S$ in $T$ such that $P$ is unimodularly triangulated by the join $S \star conv\{y_1,\ldots,y_r\}$.
Further, the projection of $P$ along the linear subspace parallel to the affine span of $\Delta$ yields a reflexive polytope with a unimodular triangulation isomorphic to the join of $S$ and a point.
\end{theorem}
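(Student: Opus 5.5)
Bruns and R\"omer~\cite{BrunsRomer} prove this theorem in their paper, and I would follow their strategy, working throughout in the cone $C:=\mathrm{cone}(P)\subseteq\R^{d+1}$ with grading given by the first coordinate.

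\emph{Step 1: the decomposition of $c$.} Since $P$ is Gorenstein, $c$ is the unique lattice point of $C^\circ$ of minimal degree; call this degree $r$. If $c$ were on the boundary of $C$ it would lie in $c+C$, contradicting $c\in C^\circ$, so $c\in C$ holds. The integer decomposition property then writes $c$ as a sum of $r$ lattice points $(1,y_i)$ with $y_i\in P\cap\mathbb{Z}^d$.

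\emph{Step 2: $\Delta$ is a special simplex.} Let $\ell_F$ be the primitive integral linear form defining the facet $F$ of $C$. The condition $c+C\cap\mathbb{Z}^{d+1}=C^\circ\cap\mathbb{Z}^{d+1}$ forces $\ell_F(c)=1$ for every facet. Indeed, $\ell_F(c)\ge1$ because $c$ is interior. If $\ell_F(c)\ge 2$, we pick a lattice point $z\in C^\circ$ with $\ell_F(z)=1$, which exists because $\ell_F$ is primitive and $C$ is full dimensional. This $z$ lies in $c+C$, so $\ell_F(z)\ge\ell_F(c)\ge2$, a contradiction. Since each $\ell_F((1,y_i))\ge0$ is an integer and these values sum to $1$, exactly one $y_i$ is off $F$. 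Hence $\Delta\cap F$ is the facet of $\Delta$ opposite that $y_i$. Because each $y_i$ misses some facet, the $y_i$ are distinct, and a short argument shows they are affinely independent.

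\emph{Step 3: the join structure.} Given a unimodular triangulation $T$, I would pull $T$ (or rather re-triangulate compatibly) so that every maximal simplex contains $\Delta$. Because each facet of $P$ meets $\Delta$ in a facet of $\Delta$, the boundary of $P$ is covered by the cones $\Delta\cap F$ joined with faces of $F$. The set $S$ consists of the faces of $T$ lying in $\partial P$ that avoid $\Delta$ but whose join with $\Delta$ is still a face, giving $P=S\star\Delta$. This is the main obstacle. One must show that this join can be taken inside $T$ itself, or in a unimodular triangulation obtained from $T$, and that unimodularity survives. Unimodularity follows from a volume/degree count: the $h^*$-polynomial coefficients force each join simplex to have normalized volume $1$. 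The fact that $S$ is a sphere follows because $S$ is the link of $\Delta$.

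\emph{Step 4: the projection.} The projection $\pi$ along the span of $\Delta$ sends $\Delta$ to a point $v$. The facets of $P$ map to facets whose defining forms $\ell_F$, shifted so that they take the value $1$ on $c$, become height-one inequalities around the interior point $v$. This gives the reflexive description $\pi(P)-v=\{x:Ax\le\mathbb{1}\}$ of Definition~\ref{def:reflexive}. Finally, $\pi$ maps the join $S\star\Delta$ to the join $\pi(S)\star v$. Unimodularity is preserved because $\pi$ restricted to the complementary lattice is an isomorphism, and $\Delta$ is unimodular, which one checks from $\ell_F(c)=1$.
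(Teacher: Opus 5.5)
The paper gives no proof of this theorem; it is quoted from Bruns and R\"omer. Your sketch therefore has to stand on its own, and it has genuine gaps in Steps~3 and~4.

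\textbf{Steps 1 and 2.} These are essentially fine. $c=c+0$ lies in $C^\circ$ directly, and your argument for $\ell_F(c)=1$ is the standard one. Choosing, for each $i$, a facet $F_i$ missing $y_i$ gives $\ell_{F_i}((1,y_j))=\delta_{ij}$, which yields distinctness, affine independence and unimodularity of $\Delta$ at once.

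\textbf{Step 3 is left open.} You flag it as the main obstacle and do not resolve it.
\begin{enumerate}
\item Your $S$ presupposes a triangulation in which $\Delta$ is a face, but $T$ need not contain $\Delta$. For the unit square with $\Delta$ the diagonal from $(0,0)$ to $(1,1)$, let $T$ use the other diagonal.
\item ``Pulling $T$'' at the $y_i$ changes nothing. Every lattice point is already a vertex of a unimodular triangulation, and pulling a simplex at one of its vertices returns the simplex.
\item The $h^*$ count is not an argument. Showing that the join has exactly as many maximal simplices as the normalized volume of $P$ is equivalent to the unimodularity you want, so it needs its own proof.
\end{enumerate}
The missing idea is to pull the trivial subdivision $\{P\}$, not $T$, successively at $y_1,\dots,y_r$. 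A facet missing $y_i$ contains every other $y_j$ and lies at lattice distance one from $y_i$. From this one checks inductively that the resulting cells are $\conv(\Delta\cup G)$. Here $G$ runs over $(d-r)$-dimensional faces with $G\subseteq F_1\cap\cdots\cap F_r$ and $y_i\notin F_i$, so $G\cap\Delta=\emptyset$.

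Now take $S$ to be the restriction of $T$ to the union of these faces. This is a subcomplex of $T$, as the statement demands. For unimodularity, fix a maximal $\sigma\in S$ lying in such a $G$. The map $x\mapsto(\ell_{F_1}(x),\dots,\ell_{F_r}(x))$ sends $(1,y_j)$ to $e_j$. Its kernel is $\mathrm{span}(\mathrm{cone}(G))\cap\mathbb{Z}^{d+1}$. The homogenized vertices of $\sigma$ form a basis of this kernel, because $\sigma$ is a face of a unimodular simplex of $T$. Hence $\conv(\Delta\cup\sigma)$ is unimodular, and your link argument for sphericity then goes through.

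\textbf{Step 4 contains a false claim.} For $r\ge 2$, no facet of $P$ maps to a facet of $P'=\pi(P)$. Each facet $F$ contains $r-1\ge 1$ of the $y_j$, so $\pi(F)$ contains the interior point $v$. In the square example, $\pi(x,y)=x-y$ sends the facet $\{x=0\}$ onto $[-1,0]\subset P'=[-1,1]$.

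Two further problems:
\begin{enumerate}
\item A single $\ell_F$ does not descend to the quotient of $\mathbb{R}^{d+1}$ by $W=\mathrm{span}\{(0,y_i-y_j)\}$, since it is not constant on the points $(1,y_j)$.
\item Your normalization ``value $1$ on $c$'' is vacuous, because you already proved $\ell_F(c)=1$.
\end{enumerate}

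The facets of $P'$ are instead the images $\pi(G)$ of the faces $G$ from Step~3. The form that descends is $\ell_{F_1}+\cdots+\ell_{F_r}$:
\begin{enumerate}
\item it equals $1$ on every $(1,y_j)$, hence vanishes on $W$;
\item it is integral and nonnegative on $C$, and vanishes on $\mathrm{cone}(G)$;
\item it takes the value $1$ at $(1,v)$.
\end{enumerate}
This is exactly the height-one inequality. Equivalently, once the unimodular join triangulation is in hand, each $\conv(\{v\}\cup\pi(\sigma))$ is unimodular for the lattice $\pi(\mathbb{Z}^d)$. So every facet of $P'$ lies at lattice distance one from $v$, which is reflexivity.
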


Recall that the \defn{join} of two polytopes $P\subset \R^{d_P}$ and $Q\subset \R^{d_Q}$ is 
\[
P\star Q := \mathrm{conv}\{P\times\{0_Q\}\times \{0\}\cup \{0_P\}\times Q \times \{1\} \}\subset \R^{d_P+d_Q+1}
\]
and that if $P$ and $Q$ both contain the zero vector in their interior, then the \defn{free sum} of $P$ and $Q$ is
\[
P\oplus Q := \mathrm{conv}\{P\times\{0_Q\} \cup \{0_P\}\times Q \}\subset \R^{d_P+d_Q} \, .
\]
If $P$ and $Q$ are both Gorenstein with special simplices $\Delta_P$ and $\Delta_Q$, respectively, then it is straightforward to verify that $P\star Q$ is Gorenstein with special simplex $\Delta_P\star \Delta_Q$.
Note also that if $P$ and $Q$ both contain the zero vector in their interior, then the free sum $P\oplus Q$ is the projection of the join $P\star Q$ along the linear subspace generated by the unit vector in the last coordinate.
The join construction for lattice polytopes can be equivalently defined as the convex hull of two polytopes in skew affine subspaces of $\R^d$ such that those skew affine subspaces are at distance one from each other with respect to a primitive integer linear functional.
For two lattice polytopes $P$ and $Q$ in this situation that each contain a designated interior lattice point, their free sum with respect to that pair of interior lattice points is the projection of $P\star Q$ along the linear subspace parallel to the line between those two interior points. 
The join and free sum constructions defined above are special cases of the construction in this general context~\cite{hrz97,m76,ps67}.

We will need the following result regarding the Bruns-R\"omer projections for joins of Gorenstein polytopes.

\begin{proposition}\label{prop:joinfreesumgorenstein}
    Assume that $P$ and $Q$ satisfy the hypotheses of Theorem~\ref{thm:brunsromer}.
    If projecting $P$ along a special simplex $\Delta_P$ yields the reflexive polytope $P'$, and if projecting $Q$ along a special simplex $\Delta_Q$ yields the reflexive polytope $Q'$, then projecting $P\star Q$ along the special simplex $\Delta_P\star \Delta_Q$ yields the reflexive polytope $P'\oplus Q'$.
    Further, the structure of the unimodular triangulations in Theorem~\ref{thm:brunsromer} for the join and free sum is induced by those of $P$ and $Q$.
\end{proposition}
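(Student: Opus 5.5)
We set up coordinates in which the join is concrete. Put $P\subset\R^{d_P}$ and $Q\subset\R^{d_Q}$, and realize
\[
P\star Q=\conv\bigl(P\times\{0_Q\}\times\{0\}\cup\{0_P\}\times Q\times\{1\}\bigr).
\]
Since the last coordinate is a primitive functional taking values $0$ and $1$ on the two pieces, the lattice points of $P\star Q$ are exactly the lattice points of the two copies. Therefore $\mathrm{cone}(P\star Q)=\mathrm{cone}(P)\times\mathrm{cone}(Q)$ after the evident identification of the homogenizing coordinates. Its interior lattice points are pairs of interior lattice points, so the sum $c_P+c_Q$ of the Gorenstein points is a Gorenstein point for $P\star Q$. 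The decomposition $c_P+c_Q=\sum(1,y_i)+\sum(1,z_j)$ exhibits $\Delta_P\star\Delta_Q$ as a special simplex. We will also verify directly that each facet of $P\star Q$ has the form $F\star Q$ or $P\star G$ for facets $F$ of $P$ and $G$ of $Q$, and that each such facet meets $\Delta_P\star\Delta_Q$ in a facet of it.

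Next we analyze the projection. Let $L_P$ and $L_Q$ be the linear spaces parallel to $\mathrm{aff}(\Delta_P)$ and $\mathrm{aff}(\Delta_Q)$. The affine span of $\Delta_P\star\Delta_Q$ is parallel to $L_P\oplus L_Q\oplus\R w$, where $w$ is the vector joining a point of $\Delta_P\times\{0\}\times\{0\}$ to a point of $\{0\}\times\Delta_Q\times\{1\}$. We project in two stages. First we quotient by $L_P\oplus L_Q$, which sends $P\times 0\times 0$ to $P'\times 0\times 0$ and $0\times Q\times 1$ to $0\times Q'\times 1$, so $P\star Q$ maps onto $P'\star Q'$. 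The images of $\Delta_P$ and $\Delta_Q$ are the interior lattice points $p'\in P'$ and $q'\in Q'$, since Bruns–Römer projections send special simplices to the unique interior point of the reflexive polytope. Second we quotient by the line through $(p',0,0)$ and $(0,q',1)$. By the general description of free sums recalled just before the statement, this yields $P'\oplus Q'$ with respect to the designated interior points. Because $P'\oplus Q'$ is the free sum of two reflexive polytopes containing the origin in their interiors, it is again reflexive: its facet inequalities are $\langle a,x\rangle+\langle b,y\rangle\le 1$ with $a$ and $b$ ranging over the facet normals of $P'$ and $Q'$. The projection must also be a lattice projection, meaning it maps $\mathbb{Z}^{d_P+d_Q+1}$ onto the quotient lattice. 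This follows because the quotient lattices of the two stages agree with those used for $P'$ and $Q'$, and the final line is spanned by a primitive vector.

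For the triangulations, suppose $P$ has a unimodular triangulation $S_P\star\Delta_P$ and $Q$ has one $S_Q\star\Delta_Q$, as in Theorem~\ref{thm:brunsromer}. Then $(S_P\star S_Q)\star(\Delta_P\star\Delta_Q)$ triangulates $P\star Q$. This holds because a join of triangulations triangulates the join, and joins of unimodular simplices across lattice distance one are unimodular. Projecting gives $(S_P\star S_Q)\star\{0\}$, which is the free-sum triangulation of $P'\oplus Q'$ obtained from the induced triangulations $S_P\star p'$ and $S_Q\star q'$. The step I expect to be hardest is the lattice bookkeeping: checking that the composite projection along $\Delta_P\star\Delta_Q$ is lattice-surjective and agrees, on each summand, with the Bruns–Römer projections producing $P'$ and $Q'$. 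Everything else is the formal behavior of joins and free sums.
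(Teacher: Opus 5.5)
Your proof is correct. For the projection and the triangulation it follows the paper's route. You quotient first by $L_P\oplus L_Q$ to land on $P'\star Q'$, then collapse the line through the two interior points to obtain $P'\oplus Q'$. The triangulation $(S_P\star S_Q)\star(\Delta_P\star\Delta_Q)$ then descends to the cone over $S_P\star S_Q$.

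Where you genuinely differ is in showing that $P\star Q$ is Gorenstein with special simplex $\Delta_P\star\Delta_Q$. The paper invokes Stanley's palindromicity criterion together with multiplicativity of $h^*$-polynomials under joins. That certifies Gorensteinness but gives no handle on the Gorenstein point, so the paper only asserts that the special simplex is $\Delta_P\star\Delta_Q$. Your factorization $\mathrm{cone}(P\star Q)\cong\mathrm{cone}(P)\times\mathrm{cone}(Q)$ produces the Gorenstein point $c_P+c_Q$ and its splitting into the $(1,y_i)$ and $(1,z_j)$ explicitly, so it actually proves the special-simplex claim. You also check reflexivity of $P'\oplus Q'$ via the facet inequalities, and lattice-surjectivity, both of which the paper leaves implicit.

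Two small repairs:
\begin{itemize}
\item The cone identification does not follow ``therefore'' from your description of the lattice points of $P\star Q$ at height one. What you need is the unimodular change of coordinates $(t,x,y,s)\mapsto\bigl((t-s,x),(s,y)\bigr)$, which carries lattice points of the cone at every height.
\item The lattice bookkeeping you flag as hardest is immediate. Take $w$ between the vertices $(y_1,0,0)$ and $(0,z_1,1)$; it has last coordinate $1$. So any lattice vector $v$ in $L_P\oplus L_Q\oplus\R w$ satisfies $v-tw\in L_P\oplus L_Q$ with $t\in\mathbb{Z}$ its last coordinate. Hence $\mathbb{Z}^{d_P+d_Q+1}\cap(L_P\oplus L_Q\oplus\R w)=(\mathbb{Z}^{d_P+d_Q+1}\cap L_P)\oplus(\mathbb{Z}^{d_P+d_Q+1}\cap L_Q)\oplus\mathbb{Z}w$, and the two-stage quotient agrees with the one-stage quotient along $\Delta_P\star\Delta_Q$.
\end{itemize}
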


\begin{proof}
The Gorenstein condition for a lattice polytope is characterized by palindromicity of the Ehrhart $h^*$-polynomial~\cite{stanleyhilbertgraded}.
The $h^*$-polynomial of a join $P\star Q$ is the product of the $h^*$-polynomials of $P$ and $Q$~\cite{BR07}, and thus $P\star Q$ is Gorenstein since products of palindromic polynomials are palindromic.
Further, if $P$ and $Q$ have the integer decomposition property, it is a straightforward exercise to show that their join does as well; for all polytopes considered in this work, $P$ and $Q$ have regular unimodular triangulations, and the join of two regular unimodular triangulations is a regular unimodular triangulation.
Thus, the join $P\star Q$ also satisfies the hypothesis of Theorem~\ref{thm:brunsromer}, and the special simplex for $P\star Q$ is the join of the special simplices $\Delta_P$ and $\Delta_Q$ for $P$ and $Q$, respectively.
Hence, the join of the triangulated spheres $S_P$ and $S_Q$ arising in the triangulations of $P$ and $Q$, respectively, given by Theorem~\ref{thm:brunsromer} is the sphere in the decomposition of $P\star Q$ arising from that theorem.
In other words, the triangulation of $P\star Q$ given by Theorem~\ref{thm:brunsromer} is $(S_P\star S_Q)\star (\Delta_P\star \Delta_Q)$.

What remains is to prove that the projection of $P\star Q$ along the linear space parallel to the affine span of the special simplex $\Delta_P\star \Delta_Q$ yields the free sum $P'\oplus Q'$.
Observe that projecting $P\star Q$ along the sum of the affine span of $\Delta_P$ and the affine span of $\Delta_Q$ yields the join $P'\star Q'$.
However, the affine span of $\Delta_P\star \Delta_Q$ has one dimension more than the sum of the dimensions of the affine span of $\Delta_P$ and the affine span of $\Delta_Q$, and this is accounted for by the subspace parallel to the line through the unique interior points of $P'$ and $Q'$ in $P'\star Q'$.
Projecting along this one-dimensional subspace completes the quotient process, and yields the free sum $P'\oplus Q'$.
\end{proof}

We will also require the following proposition regarding pulling triangulations of free sums of reflexive polytopes.

\begin{proposition}\label{prop:freesumpulling}
If $P$ and $Q$ are reflexive polytopes with pulling triangulations $T_P$ and $T_Q$ where each of those pulling triangulations begin by pulling at the origin, then pulling in $P\oplus Q$ first at the origin and then at the points of $P$ in the order for $T_P$ followed by the points of $Q$ in the order for $T_Q$ yields a pulling triangulation of $P\oplus Q$ isomorphic to the triangulation obtained by taking the cone at the origin over the topological join $T_P|_{\partial P}\star T_Q|_{\partial Q}$.
Further, if $T_P$ and $T_Q$ are unimodular, then so is the resulting triangulation of the free sum.
\end{proposition}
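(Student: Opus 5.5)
We prove Proposition~\ref{prop:freesumpulling} by reducing everything to the cone-over-boundary description of pulling triangulations that start at an interior point. For a reflexive polytope $P$ with the origin as its unique interior lattice point, pulling first at the origin subdivides $P$ into the pyramids $\conv(F,0)$ over the facets $F$ of $P$. By Remark~\ref{remark:face_pulling}, every later pull at a boundary point $x$ acts on each cell $\conv(F,0)$ with $x\in F$ only through the base $F$: the cell becomes $\conv(\conv(f',x),0)$ for the faces $f'$ of $F$ not containing $x$. Hence $T_P$ is the cone from $0$ over the pulling triangulation $T_P|_{\partial P}$ of the boundary complex obtained by pulling its faces in the given order. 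The same holds for $T_Q$. Pulls at the origin itself after the first step do nothing, so we may assume the orders list boundary points only.

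Next we use the face structure of the free sum. Since $P$ and $Q$ contain the origin in their interiors, the proper faces of $P\oplus Q$ are exactly the joins $\conv(f\cup g)$, where $f$ is a proper face of $P$ (or empty) and $g$ is a proper face of $Q$ (or empty). Moreover, the boundary complex $\partial(P\oplus Q)$ is combinatorially the join $\partial P\star\partial Q$, and these joins are geometric because $f$ and $g$ span complementary subspaces.

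So after pulling at the origin, the cells of $P\oplus Q$ are the cones $\conv(0,F\cup G)$ over facets $F$ of $P$ and $G$ of $Q$, and it suffices to track pulling on the boundary complex $\partial P\star\partial Q$.
\begin{itemize}
\item \emph{Pulling at points $x\in\partial P$.} A face $\conv(f\cup g)$ contains $x$ if and only if $x\in f$. Its subfaces not containing $x$ are the joins $\conv(f'\cup g')$ with $x\notin f'$. Pulling therefore replaces $\conv(f\cup g)$ by the cells $\conv(\conv(f',x)\cup g)$. This is exactly the join of the pulled complex of $\partial P$ with the unchanged complex $\partial Q$.
\item \emph{Induction.} Inducing on the number of pulls, after processing the whole order for $T_P$ the boundary is $T_P|_{\partial P}\star\partial Q$.
\item \emph{Pulling at points $y\in\partial Q$.} The same argument with the roles of $P$ and $Q$ swapped, applied to the now-simplicial $P$-side, yields $T_P|_{\partial P}\star T_Q|_{\partial Q}$.
\end{itemize}
Coning from the origin gives the claimed triangulation of $P\oplus Q$.

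The main obstacle is justifying carefully that pulling commutes with the join structure, namely that pulling a point of $P$ in the complex $\partial P\star\partial Q$ is the join of the pulled $\partial P$ with $\partial Q$. This requires the face description of free sums. We would verify it by checking the face lattice: the faces of $\conv(f\cup g)$ not containing $x\in f$ are exactly the joins $\conv(f'\cup g')$ with $x\notin f'$. This holds because $\conv(f\cup g)$ is combinatorially the join of $f$ and $g$.

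For unimodularity, each maximal simplex of the resulting triangulation has the form $\conv(0,\sigma\cup\tau)$, where $\conv(0,\sigma)$ is a unimodular simplex of $T_P$ and $\conv(0,\tau)$ is a unimodular simplex of $T_Q$. The vectors of $\sigma$ form a lattice basis of $\mathbb{Z}^{d_P}$ and those of $\tau$ form one of $\mathbb{Z}^{d_Q}$. Their union is therefore a basis of $\mathbb{Z}^{d_P+d_Q}$, giving a block-diagonal determinant of $\pm1$, so $\conv(0,\sigma\cup\tau)$ is unimodular.
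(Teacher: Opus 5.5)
Your proof is correct, and its combinatorial core matches the paper's one-line argument. The paper only observes that the facets of $P\oplus Q$ are joins of facets of $P$ and $Q$. You make this precise: after the first pull at the origin, every later pull acts only on the boundary complex $\partial P\star\partial Q$, and only on the join factor containing the pulled point. For $x\in\partial P$, a cell $\conv(0,\sigma\cup\tau)$ contains $x$ if and only if $x\in\sigma$. Its facets missing $x$ are exactly the $\conv(0,\sigma'\cup\tau)$ with $\sigma'$ a facet of $\sigma$ not containing $x$. These must be codimension-one faces, not arbitrary faces as you wrote in two places, although your resulting cells $\conv(\conv(f',x)\cup g)$ are right.

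You genuinely differ from the paper on unimodularity. The paper appeals to the facets of reflexive polytopes lying at lattice distance one from the origin. This reduces unimodularity of a cone $\conv(0,\sigma\star\tau)$ to unimodularity of $\sigma\star\tau$ inside its facet. You instead check directly that the vertex vectors of $\sigma\cup\tau$ form a block-diagonal lattice basis of $\mathbb{Z}^{d_P+d_Q}$. Your route is more elementary and never uses the lattice-distance-one property; it needs only that every simplex of $T_P$ and $T_Q$ has the origin as a vertex. In your argument, reflexivity is used only to guarantee that every pull point other than the origin lies on $\partial P$ or $\partial Q$. That step tacitly assumes the pulling orders consist of lattice points, which is the intended reading.
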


\begin{proof}
    This follows by observing that the facets of $P\oplus Q$ are given by the joins of facets of $P$ with facets of $Q$ and that facets of reflexive polytopes are at lattice distance one from the origin.
\end{proof}

\subsection{Locally anti-blocking polytopes}\label{sec:locallyanti-blocking}

Our main polyhedral property of interest is the concept of a locally anti-blocking polytope, for which we need some notation.
Let $P\subset \mathbb{R}^d$ be a convex lattice polytope, let $\mathbb{R}^d_+ = \{ x \in \mathbb{R}^d: x_1,\ldots, x_d\geq0\}$ denote the positive orthant, and let $P_+= P\cap \mathbb{R}^d_+$ denote the restriction of $P$ to the positive orthant.

\begin{definition}
The polytope $P_+$ is \defn{anti-blocking} if for any $x\in P_+$ and $y\in \mathbb{R}^d$ with $0\leq y_i\leq x_i$ for all $i=1,\ldots, d$, then $y\in P_+$.
\end{definition}

\begin{definition}
Let $\sigma\in \{-1,1\}^d$.
For $x\in \mathbb{R}^d$, let $\sigma x = (\sigma_1 x_1,\ldots, \sigma_d x_d)$.
A polytope $P \subset \mathbb{R}^d$ is \defn{locally anti-blocking} if $P_\sigma := (\sigma P) \cap\mathbb{R}^d_+$ is anti-blocking for every $\sigma\in \{-1,1\}^d$.
\end{definition}

Anti-blocking polyhedra arose originally in optimization, but have been found to play important roles in geometric combinatorics as well~\cite{kohlolsensanyal, RR}.
The following proposition gives a method for checking the locally anti-blocking condition.

\begin{proposition}\label{prop:lab-iff-projections} 
A polytope $P$ is locally anti-blocking if and only if for each vertex $v\in P$ we have $\pi_I(v)\in P$ for each $I\in 2^{[d]}$, where $\pi_I$ is the projection defined by $\mathbf{e}_i\mapsto \mathbf{0}$ for $i\in I$.
\end{proposition}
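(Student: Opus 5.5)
We prove the two directions separately, using only the definitions of anti-blocking and locally anti-blocking together with convexity. The key observation is that the coordinate projection $\pi_I$ sends a point to one that is coordinatewise dominated in absolute value with the same sign pattern. Precisely, if $x\in\R^d$ and $\sigma\in\{-1,1\}^d$ is any sign vector with $\sigma_i x_i = |x_i|$ for all $i$, then $\sigma\pi_I(x)$ satisfies $0\le (\sigma\pi_I(x))_i\le (\sigma x)_i$ for every $i$.

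\emph{Forward direction.} Assume $P$ is locally anti-blocking, and let $v$ be a vertex and $I\subseteq[d]$. Choose $\sigma$ with $\sigma_i v_i=|v_i|$ for all $i$, so that $\sigma v\in P_\sigma$. By the observation above, $0\le \sigma\pi_I(v)\le \sigma v$ coordinatewise. Since $P_\sigma$ is anti-blocking, $\sigma\pi_I(v)\in P_\sigma\subseteq \sigma P$. Because $\sigma$ is an involution, this gives $\pi_I(v)\in P$.

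\emph{Reverse direction.} Assume $\pi_I(v)\in P$ for every vertex $v$ and every $I$. The first step is to upgrade this hypothesis from vertices to all points of $P$. Write $x\in P$ as $\sum_j\lambda_j v^j$, a convex combination of vertices. Since $\pi_I$ is linear, $\pi_I(x)=\sum_j\lambda_j\pi_I(v^j)$, which lies in $P$ by convexity. Hence $\pi_I(P)\subseteq P$ for all $I$.

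Next, fix $\sigma$, and take $x\in P_\sigma$ and $y$ with $0\le y\le x$ coordinatewise. We must show $y\in P_\sigma$. Let $z=\sigma x\in P$ and $I=\{i: x_i>0\}$ (coordinates with $x_i=0$ force $y_i=0$ and need no attention). For each $i\in I$ set $t_i=y_i/x_i\in[0,1]$. Then
\[
y=\sigma w,\qquad w_i=t_i z_i \text{ for } i\in I,\quad w_i=0 \text{ otherwise}.
\]
The point $w$ is obtained from $z$ by scaling each coordinate independently by a factor in $[0,1]$. This scaling map is a product of one-coordinate scalings, so it suffices to show that $P$ is closed under scaling a single coordinate $i$ by any $t\in[0,1]$. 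That closure holds because
\[
(1-t)\,\pi_{\{i\}}(z)+t\,z
\]
keeps every coordinate $j\neq i$ equal to $z_j$ and scales coordinate $i$ by $t$. It lies in $P$ by convexity, since $z\in P$ and $\pi_{\{i\}}(z)\in P$ by the previous step. Iterating over $i\in I$, and applying $\pi$ to zero out the coordinates outside $I$, gives $w\in P$. Therefore $y=\sigma w\in \sigma P$, and $y\ge 0$, so $y\in P_\sigma$.

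The only step that needs care is the passage from the vertex condition to arbitrary points, and the coordinate-by-coordinate interpolation between $\pi_{\{i\}}(z)$ and $z$. Both follow from linearity of $\pi_I$ and convexity of $P$, so I expect no genuine obstacle.
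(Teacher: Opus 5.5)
Your proof is correct and takes essentially the same route as the paper. The forward direction applies anti-blocking in the orthant containing $v$. The reverse direction uses convexity together with the segment from a point to its coordinate projection to scale one coordinate at a time; the paper interpolates at the vertices and then takes the convex combination, while you first establish $\pi_I(P)\subseteq P$ and then interpolate, which is the same argument in the other order. Your explicit bookkeeping with the sign vector $\sigma$ and the ratios $t_i=y_i/x_i$ spells out the reduction from the anti-blocking definition to coordinatewise scaling, which the paper leaves implicit.
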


\begin{proof}
If $P$ is locally anti-blocking, then for any vector in $P$, an arbitrary collection of coordinates can be set to zero and the resulting point remains in $P$. 
This establishes the forward direction.
For the reverse direction, suppose that $w\in P$.
Thus, $w$ is a convex combination of vertices of $P$.
Suppose that $w'$ is obtained from $w$ by scaling the $i$-th coordinate of $w$ by a factor $0\leq r <1$.
In the convex combination defining $w$, we can scale the $i$-th coordinate of every vertex of $P$ by $r$, and the resulting combination will yield $w'$. 
Because of the vector obtained by scaling the $i$-th coordinate of a vertex $v$ by $r$ is in line segment formed by $v$ and $v$ with the $i$-th coordinate set to $0$, our assumption yields that every such scaled vertex is in $P$.
Thus, $w'$ is also in $P$.
Iterating this process to account for any number of coordinates of $w$, we find that $P$ is locally anti-blocking.
\end{proof}

A key result motivating our work is the following characterization of locally anti-blocking reflexive polytopes, which connects the locally anti-blocking condition to the compressed condition.

\begin{theorem}[Kohl, Olsen, Sanyal~\cite{kohlolsensanyal}]\label{thm:labcompressed}
If $P$ is a locally anti-blocking polytope with the origin in the interior, then $P$ is reflexive if and only if for every $\sigma\in \{1,-1\}^d$ we have that $P_\sigma$ is compressed.
\end{theorem}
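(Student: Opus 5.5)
The plan is to reduce the statement to the orthant pieces $P_\sigma$ and to work one orthant at a time. Since $P$ is locally anti-blocking, we have $P=\bigcup_{\sigma}\sigma P_\sigma$. Each $P_\sigma$ is anti-blocking, so it has a description
\[
P_\sigma=\{x\in\R^d_+ : \langle a,x\rangle\le 1 \text{ for } a\in A_\sigma\},
\]
where every $a\in A_\sigma$ has nonnegative entries. This uses the standard structure theorem for anti-blocking polytopes, which I would quote from Fulkerson. The first step is to check that the facets of $P$ not lying in coordinate hyperplanes are exactly the sets $\{\langle \sigma a, x\rangle = 1\}$ with $a\in A_\sigma$. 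Two facts make this work. First, any facet-defining inequality $\langle b,x\rangle\le 1$ of $P$ whose facet meets the interior of the orthant $\sigma\R^d_+$ satisfies $\sigma b\ge 0$ coordinatewise; this follows because $P$ is closed under zeroing coordinates, by Proposition~\ref{prop:lab-iff-projections}. Second, the restriction of such a facet to that orthant is a facet of $\sigma P_\sigma$. Since the origin is interior, every facet of $P$ can be written in the form $\langle b,x\rangle\le 1$. With this dictionary, $P$ is reflexive if and only if every nontrivial facet normal $a\in A_\sigma$, over all $\sigma$, is an integer vector, because $P$ is already assumed to be a lattice polytope.

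Next I would relate integrality of $A_\sigma$ to compressedness of $P_\sigma$, using the width-one characterization (Santos, Ohsugi--Hibi, Sullivant) stated above.

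For the implication from compressed to reflexive, fix $a\in A_\sigma$. The point $0$ lies in $P_\sigma$, so the linear functional $\langle a,\cdot\rangle$ ranges over $[0,1]$ on $P_\sigma$. Width one with respect to this facet means that, after scaling $a$ to a primitive integer normal $\lambda a$, the lattice points of $P_\sigma$ take only two consecutive values, namely $0$ and $\lambda$. Width one then forces $\lambda=1$, so $a$ is integral.

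For the reverse implication, assume all $a\in A_\sigma$ are integral. The remaining facets of $P_\sigma$ are coordinate facets $x_i\ge 0$. On those, the width is $\max x_i$ over $P_\sigma$. Setting all other coordinates to zero stays inside $P$, and $e_i$-direction points of $P$ lie in the segment bounded by some inequality $a_i x_i\le 1$ with $a_i$ a nonnegative integer, so the maximum is at most $1$. For a nontrivial facet, integrality of $a$ and nonnegativity of all points give values in $[0,1]$, attained at $0$ and on the facet, so the width is one. Hence $P_\sigma$ is compressed.

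The main obstacle is the facet dictionary in the first step. The issue is that a facet of $P_\sigma$ of the form $\langle a,x\rangle=1$ must genuinely extend to a facet of $P$ rather than being an artifact of intersecting with the orthant. I would handle this by showing that the hyperplane $\langle \sigma a,x\rangle=1$ supports $P$ in a neighborhood of a relative interior point of the facet that lies in the open orthant. Supporting $P$ near such a point is automatic because that point is interior to $\sigma\R^d_+$. Convexity of $P$ then upgrades this local support to global support.

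A secondary subtlety is the case where $a$ has a zero coordinate, so that the facet touches the orthant's boundary. There I would still choose the witness point in the open orthant, which is possible since $P_\sigma$ is full-dimensional because $0$ is interior to $P$.
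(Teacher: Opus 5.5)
\textbf{Overall.} The paper gives no proof of this statement. It is quoted from~\cite{kohlolsensanyal} and used as a black box, so there is no internal argument to compare yours against, and I judged your proposal on its own. Your route is sound: you match the facets of $P$ with the non-coordinate facets of the orthant pieces $\sigma P_\sigma$, and then read integrality of the normals off the width-one criterion. The local-support argument for extending a facet of $\sigma P_\sigma$ to a facet of $P$ is correct, as is the sign argument using closure under zeroing coordinates. Both width computations are also correct.

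\textbf{Missing ingredient.} The implication ``reflexive $\Rightarrow$ compressed'' needs one fact you never establish: each $P_\sigma$ is a \emph{lattice} polytope. The width-one criterion applies only to lattice polytopes, and integral facet normals of an anti-blocking polytope do not force integral vertices. For example, $Q=\{x\in\R^5_+ : x_i+x_{i+1}\le 1,\ i\in\mathbb{Z}/5\mathbb{Z}\}$ has $0/1$ normals and $\max_Q x_i=1$ for every $i$, so it passes all your width checks. Yet $(\tfrac12,\dots,\tfrac12)$ is a vertex of $Q$. What excludes this in your setting is that $P$ itself is a lattice polytope, and you must transfer that to $P_\sigma$. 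Here is a short argument. Let $w$ be a vertex of $P\cap\sigma\R^d_+$ and $I=\{i : w_i=0\}$. Small perturbations of $w$ inside $\{x : x_I=0\}$ stay in the orthant, so $w$ is also a vertex of $P\cap\{x: x_I=0\}$. Local anti-blocking gives $P\cap\{x:x_I=0\}=\pi_I(P)=\conv\{\pi_I(v) : v \text{ a vertex of } P\}$, which is a lattice polytope. Hence $w$ is a lattice point.

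\textbf{Smaller points to tighten.}
\begin{enumerate}
\item No facet of $P$ lies in a coordinate hyperplane, since the origin is interior. What the direction compressed $\Rightarrow$ reflexive actually needs is that every facet of $P$ meets some \emph{open} orthant. This holds because the facet is $(d-1)$-dimensional and its affine hull avoids the origin. So the facet lies in no single coordinate hyperplane, and a convex set of this kind cannot be covered by the finitely many coordinate hyperplanes.
\item The witness point in the open orthant for a non-coordinate facet $F'$ of $P_\sigma$ does not follow from full-dimensionality of $P_\sigma$ alone. It follows because $F'$ lies in no coordinate hyperplane: average points of $F'$ having $x_i>0$, one for each $i$.
\item In the reverse direction, width one is measured with the primitive normal. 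An integral $a$ is automatically primitive, because the extended facet of $P$ contains a lattice point $v$ with $\langle \sigma a, v\rangle = 1$.
\item ``Reflexive'' in the paper's definition refers to some interior lattice point. Since a reflexive polytope has a unique interior lattice point and the origin is interior, that point is the origin. This justifies your equivalence between reflexivity and integral normals with right-hand side $1$.
\end{enumerate}
With these additions your argument is a complete proof.
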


The locally anti-blocking property is preserved by the free sum operation.

\begin{proposition}\label{prop:labfreesum}
    If $P$ and $Q$ are both locally anti-blocking polytopes with their respective origins in their interiors, then the free sum $P\oplus Q$ is locally anti-blocking.
\end{proposition}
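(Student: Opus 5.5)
The most direct route is the vertex criterion of Proposition~\ref{prop:lab-iff-projections}. I would set $P\subset\R^{d_P}$ and $Q\subset\R^{d_Q}$, and write $P\oplus Q=\conv\{P\times\{0_Q\}\cup\{0_P\}\times Q\}\subset\R^{d_P+d_Q}$. The vertices of $P\oplus Q$ are among the points $(v,0_Q)$ with $v$ a vertex of $P$, and $(0_P,w)$ with $w$ a vertex of $Q$. This holds because any vertex of a convex hull of a union lies in one of the two sets, and vertices of $P\times\{0_Q\}$ are $(v,0_Q)$ for vertices $v$ of $P$. The criterion is stated for vertices of $P\oplus Q$, but checking it on this larger candidate list is harmless.

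Take $I\subseteq[d_P+d_Q]$ and split it as $I=I_P\sqcup I_Q$, according to whether an index is one of the first $d_P$ coordinates or one of the last $d_Q$. For a candidate vertex $(v,0_Q)$, the projection setting the coordinates in $I$ to zero gives $\pi_I(v,0_Q)=(\pi_{I_P}(v),0_Q)$, since the $Q$-coordinates are already zero. By Proposition~\ref{prop:lab-iff-projections} applied to $P$, which is locally anti-blocking, $\pi_{I_P}(v)\in P$. Hence $(\pi_{I_P}(v),0_Q)\in P\times\{0_Q\}\subseteq P\oplus Q$. The case $(0_P,w)$ is symmetric, using that $Q$ is locally anti-blocking. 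The converse direction of Proposition~\ref{prop:lab-iff-projections} then shows that $P\oplus Q$ is locally anti-blocking.

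The only point needing care is that the criterion is phrased for the actual vertices of $P\oplus Q$. The proof of that proposition works for any finite generating set of $P\oplus Q$ as a convex hull, because it only uses writing a point as a convex combination of the generators and scaling coordinates. So checking the condition on all the points $(v,0_Q)$ and $(0_P,w)$ suffices, whether or not each of them is truly a vertex.

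One could alternatively argue orthant by orthant. For each sign vector $\sigma=(\sigma_P,\sigma_Q)$ one would identify $(P\oplus Q)_\sigma$ with $\conv(P_{\sigma_P}\times 0\cup 0\times Q_{\sigma_Q})$, which is anti-blocking. That identification requires a small argument that restricting to an orthant commutes with the free sum, and this is the step I would expect to be the main obstacle. The vertex criterion avoids it entirely.

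The hypothesis that the origins lie in the interiors is used only so that $P\oplus Q$ is defined as in the earlier discussion, with $0_P\in P$ and $0_Q\in Q$.
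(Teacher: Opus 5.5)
Your proof is correct and follows the paper's argument: the paper also applies the vertex criterion of Proposition~\ref{prop:lab-iff-projections} to $P\oplus Q$. It notes that the vertices of the free sum are the summands' vertices embedded as $(v,0_Q)$ and $(0_P,w)$, so the projection condition is inherited directly from $P$ and $Q$. Your observation that the criterion may be checked on any finite generating set is a slightly more careful version of the paper's assertion that the vertex set of $P\oplus Q$ is exactly the union of the summands' vertex sets.
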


\begin{proof}
Proposition~\ref{prop:lab-iff-projections} states that it is sufficient to check the locally anti-blocking condition on vertices of a polytope.
Since the vertex set of the free sum is the union of the vertices of the summands, the projection condition from Proposition~\ref{prop:lab-iff-projections} follows immediately from the projection condition being satisfied by the summands.
\end{proof}

\subsection{Flow polytopes}\label{sec:flowpolytopes}

Let $G=(V,E)$ be a finite acyclic directed graph (DAG) with vertex set $V = \{0,\ldots, n+1\}$ and edge multiset $E$.
For a vertex $v\in V$, let $\inedge(v)$ (respectively $\outedge(v)$) denote the set of edges incoming to $v$ (respectively the set of edges outgoing from $v$).
Any vertex of $G$ that is not a source vertex or a sink vertex is an \defn{inner vertex}.

\begin{definition}
A \defn{flow} $f$ on a DAG $G$ is a function $f:E \rightarrow \bbR_{\geq0}$ satisfying
\[
\sum_{e\in \inedge(v)} f(e) = \sum_{e\in \outedge(v)} f(e)
\]
for every inner vertex $v$.
The \defn{flow polytope $\calF_1(G)$} is the set of all flows on $G$ of size $1$, that is,
\[
\sum_{\genfrac{}{}{0pt}{}{\hbox{\tiny $v$ is a source}}{\hbox{\tiny $e\in \outedge(v)$}}} f(e) =1.
\]
\end{definition}

A \defn{route} of $G$ is a directed path from a source vertex to a sink vertex of $G$.  
An alternative definition of the flow polytope $\calF_1(G)$ is that it is the convex hull of indicator vectors in $\mathbb{R}^E$ of the routes of $G$.
An edge of $G$ is \defn{idle} if it is the only incoming or outgoing edge from an inner vertex.
Contracting an idle edge $e$ in $G$ yields a graph whose flow polytope is integrally equivalent to the flow polytope on $G$.
This is because the contraction of $e$ corresponds to projection along the coordinate $x_e$.
For the purposes of this article, there is no loss of generality in assuming that a DAG has no idle edges.

Danilov, Karzanov and Koshevoy~\cite{DKK12} developed a combinatorial method for constructing a collection of regular unimodular triangulations of a flow polytope.
We next describe their construction.

\begin{definition}
A \defn{framing} at an inner vertex $v$ of $G$ is a choice of linear orders $\preceq_{F,\inedge(v)}$ and $\preceq_{F,\outedge(v)}$ on the set of edges incoming to $v$ and on the set of edges outgoing from $v$.  
A \defn{framing} of a DAG $G$ is a framing at every inner vertex of $G$.
A \defn{framed graph} $(G,F)$ is a DAG $G$ with a framing $F$.
\end{definition}

Let $v$ be an inner vertex of $G$.
Let $\In(v)$ denote the set of partial routes in $G$ from a source vertex to $v$ and let $\Out(v)$ denote the set of partial routes in $G$ from $v$ to a sink vertex.
The framing of $G$ induces a total order on the sets of partial routes of $G$ at $v$ as follows.

\begin{definition}
Let $v$ be an inner vertex of $G$, and let $R, S \in \In(v)$ be partial routes which coincide on the subroutes $R'\subseteq R$ and $S'\subseteq S$ from vertex $u$ to $v$ so that the edges immediately preceding $u$ on $R$ and on $S$ are distinct, which we denote by $e_R$ and $e_S$.
Let $R \preceq_{F,\In(v)} S$ if and only if $e_R \preceq_{F, \inedge(v)} e_S$.
The definition of $\preceq_{F,\Out(v)}$ is analogous.
\end{definition}

Let $R$ be a route of $G$ that contains the inner vertex $v$.
Let $Rv$ (respectively $vR$) denote the subroute from the source of $R$ to $v$ (respectively the subroute from $v$ to the sink of $R$).
\begin{definition}
Let $R$, $S$ be routes of $G$ containing a common subroute from $u$ to $v$, denoted $[u,v]$ (where $u$ may be the same vertex as $v$).
The routes $R$ and $S$ are \defn{in conflict at $[u,v]$} if $Ru$ and $Su$ are ordered differently from $vR$ and $vS$.  
Otherwise, they are \defn{coherent at $[u,v]$}.
$R$ and $S$ are \defn{coherent} if they are coherent at every common subroute.
A route is \defn{exceptional} if it is coherent with every route of $G$.
\end{definition}

\begin{definition}
A \defn{clique} of $G$ is a collection of mutually coherent routes, with respect to the framing $F$.    
\end{definition}

\begin{theorem}[{Danilov, Karzanov, Koshevoy~\cite{DKK12}}]\label{thm:dkk-triangulation}
Let $(G,F)$ be a framed DAG.
The set of cliques of $G$ with respect to the framing $F$ forms a regular unimodular triangulation of $\calF_1(G)$.
\end{theorem}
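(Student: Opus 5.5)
The plan follows the usual proof of the DKK theorem: we realize the cliques as the cells of a regular subdivision and then check unimodularity separately. We work in $\R^E$, where $\calF_1(G)$ is the convex hull of the route indicator vectors $\mathbb{1}_R$. We rely on two standard facts.
\begin{enumerate}
\item Every flow $f\in\calF_1(G)$ admits a unique decomposition $f=\sum_i \lambda_i \mathbb{1}_{R_i}$ with $\lambda_i>0$ and the $R_i$ mutually coherent.
\item The indicator vectors of any clique are affinely independent.
\end{enumerate}
Together these say that the simplices $\Delta_C=\conv\{\mathbb{1}_R : R\in C\}$, for maximal cliques $C$, cover $\calF_1(G)$, have disjoint relative interiors, and meet along common faces, since faces of $\Delta_C$ are subcliques. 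Hence they form a triangulation.

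For existence in (1), we take $f$ (first rational, then argue by density/compactness) and decompose it greedily. At each inner vertex $v$, the framing orders $\inedge(v)$ and $\outedge(v)$. We "stack" the incoming flow as intervals of $[0,\,\text{flow through } v)$ in the order $\preceq_{F,\inedge(v)}$, and likewise the outgoing flow in the order $\preceq_{F,\outedge(v)}$. A point $t\in[0,1)$ then traces a route $R_t$ by following, at each vertex, the interval containing its local height. This requires propagating consistently, with heights determined recursively through the induced orders on $\In(v)$ and $\Out(v)$. The finitely many distinct routes $R_t$ arise with weights equal to interval lengths, and by construction two such routes are ordered the same way before and after any common subroute, so they are coherent. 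For uniqueness, we show that any coherent decomposition must coincide with this stacking. Coherence forces the routes through $v$ to be linearly ordered compatibly on both sides, which pins down the intervals.

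For affine independence in (2), we order a clique totally using the stacking order; this is the main step. We define $R\prec S$ if, at the first vertex where they diverge, or equivalently at any common vertex, $R$ lies below $S$. Coherence makes this well defined and transitive, so the clique is a chain. The differences $\mathbb{1}_{R_{i+1}}-\mathbb{1}_{R_i}$ are then supported on distinct "first new edges", which gives triangularity and hence independence. The same chain structure gives unimodularity. A clique simplex has dimension equal to the clique size minus one, and a maximal clique has $|E|-|V|+2$ routes, matching $\dim\calF_1(G)+1$. Expressing the lattice $\mathbb{Z}^E\cap\mathrm{aff}$ via integral flows, the integral version of (1) shows that every integral flow of strength $k$ decomposes with integral coefficients in the cone over $\Delta_C$. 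That integer decomposition on every cell is exactly unimodularity.

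For regularity, we would exhibit a height function, for example $h(R)=\sum$ over pairs of edges of $R$ of a weight penalizing conflicts. More concretely, we take a strictly convex function of local "heights", in the style of DKK's use of a convex functional on the stacking intervals. We then show that the lower envelope cells are exactly the cliques, by checking that replacing two conflicting routes by their swaps at the conflict strictly decreases $h$. Swapping conflicting routes $R,S$ at $[u,v]$ yields $R',S'$ with $\mathbb{1}_R+\mathbb{1}_S=\mathbb{1}_{R'}+\mathbb{1}_{S'}$. So a convex-position height with $h(R)+h(S)>h(R')+h(S')$ for every conflicting pair certifies that conflicting pairs never lie in a common lower cell.

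The main obstacle is constructing such an $h$ explicitly. I would first try $h(R)=\sum_{v\in R} \phi(\mathrm{pos}_{\In(v)}(R))\cdot\psi(\mathrm{pos}_{\Out(v)}(R))$ with a suitable supermodular product form, and then verify the swap inequality vertex by vertex.
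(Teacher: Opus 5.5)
The paper does not prove this statement; it quotes it from \cite{DKK12}, so I am comparing your proposal with the standard argument. Your existence and uniqueness of coherent decompositions via stacking is sound. Uniqueness rests on the fact that at each inner vertex $v$ the pairs $(Rv,vR)$ of a clique form a chain in the product order. Your unimodularity argument, that integral stacking yields integral coefficients, is also sound.

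\textbf{The proof of (2) is wrong as written.} A clique is \emph{not} in general a chain under local comparisons. Take inner vertices $a,b,c$ and edges $sa,sb,ab,ac,bc,bt,ct$, with framing $ac\prec ab$ at $a$; $ab\prec sb$ and $bt\prec bc$ at $b$; and $bc\prec ac$ at $c$. The routes $R=sact$, $S=sabt$, $T=sbct$ are pairwise coherent: the only common subroute with both endpoints inner is $\{b\}$, shared by $S$ and $T$, where the in- and out-orders agree. Yet $R$ is below $S$ at $a$, $S$ is below $T$ at $b$, and $T$ is below $R$ at $c$. You do not need a chain, however. An affine dependence among indicator vectors of a clique can be written as $\sum_{i\in I}\lambda_i\mathbb{1}_{R_i}=\sum_{j\in J}\mu_j\mathbb{1}_{R_j}$ with $I\cap J=\emptyset$ and positive coefficients; the two sums of coefficients are equal, by comparing strengths. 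This gives one flow with two different coherent decompositions, contradicting (1).

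\textbf{The genuine gap is regularity.} Your reduction is correct. A height with $h(R)+h(S)>h(R')+h(S')$ for every conflicting pair and its swap forbids a conflicting pair in a common lower cell. So every lower cell is a clique simplex, and by (1)--(2) the regular subdivision equals the clique triangulation. But you never produce $h$, and your candidate $\sum_{v\in R}\phi(\cdot)\psi(\cdot)$ cannot be verified vertex by vertex. Write $R=P_1MQ_1$ and $S=P_2MQ_2$ with $M$ the conflict segment $[u,v]$, so that $R'=P_1MQ_2$ and $S'=P_2MQ_1$. The swap changes the suffix $wR$ at every vertex $w$ of $P_1$ and $P_2$, and the prefix $Rw$ at every vertex of $Q_1$ and $Q_2$. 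Those off-segment terms do not cancel, and their net sign is uncontrolled; for instance, it depends on the lengths of $P_1$ versus $P_2$.

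What works is a bilinear form over \emph{pairs of edges}:
\begin{itemize}
\item Assign nonnegative edge weights $\alpha$ in topological order with rapidly growing scales: the $k$-th in-edge of $u$ gets $kN_u$, where $N_u$ exceeds every $\alpha$-sum of a partial route into an in-neighbor of $u$. Then partial routes into $u$ ending in in-edges $e\prec e'$ have $\alpha$-sums in the same order.
\item Define $\beta$ dually for the out-orders, in reverse topological order.
\item Set $h(R)=-\sum_{e\text{ before }e'\text{ on }R}\alpha(e)\beta(e')$.
\end{itemize}
Splitting the pairs of edges into blocks gives $h(R)+h(S)-h(R')-h(S')=-(\alpha(P_1)-\alpha(P_2))(\beta(Q_1)-\beta(Q_2))$. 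This is positive exactly because $R$ and $S$ conflict at $[u,v]$.
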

This is the \defn{DKK triangulation of $\calF_1(G)$ corresponding to the framing $F$}, and we denote it by $\DKK(G,F)$.
In this article we are mainly concerned with DKK triangulations arising from the special class of ample framings. 

\begin{definition}\label{def:ampleframing}
A framing is \defn{ample} if the set of exceptional routes it induces is not contained in any facet of the positive cone of flows on $G$.
\end{definition}

The definition of an ample framing was motivated by the problem of determining when the reduced fan of the DKK triangulation of the cone of flows on $G$ is complete, see~\cite{DKK12, PPS} for a more in-depth discussion.
\begin{theorem}[{\cite[Proposition 5]{DKK12}}]
Let $(G,F)$ be a framed graph.  
The following are equivalent:
\begin{enumerate}
    \item $F$ is an ample framing,
    \item the quotient of the cone of nonnegative flows on $G$ by the linear span of the exceptional routes of $(G,F)$ is a complete fan,
    \item each non-idle edge of $G$ belongs to an exceptional route for $F$.
\end{enumerate}
\end{theorem}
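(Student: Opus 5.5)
The plan is to prove (1)$\Leftrightarrow$(3) by describing the facets of the cone of flows, and then (1)$\Leftrightarrow$(2) by a covering argument in the quotient space. Throughout, write $C\subseteq \R^E$ for the cone of nonnegative flows on $G$ and $W$ for its linear span. Assume, as the paper allows, that $G$ has no idle edges except those we explicitly discuss, and that every edge lies on some route (edges on no route can be discarded). Let $\mathcal{E}$ denote the set of exceptional routes and $L=\mathrm{span}\,\mathcal{E}$.

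\emph{Step 1: the facets of $C$.} $C$ is cut out of $W$ by the inequalities $x_e\ge 0$. I will show that $x_e\ge 0$ is facet-defining exactly when $e$ is non-idle. The dimension of $W$ is $|E|$ minus the number of inner vertices, since the conservation equations are independent on a DAG.
\begin{itemize}
\item If $e$ is non-idle, then deleting $e$ leaves every inner vertex with at least one incoming and one outgoing edge. Hence every remaining edge still lies on a route, and the face $C\cap\{x_e=0\}$ is the flow cone of $G\setminus e$. Its dimension is exactly one less than $\dim W$, so it is a facet.
\item If $e$ is idle, then $x_e$ equals the sum of the flows on the edges at the other side of the relevant inner vertex. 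So $x_e\ge 0$ is implied by the other inequalities and defines no new facet.
\end{itemize}
Consequently, a set of routes lies in no facet of $C$ if and only if every non-idle edge is used by one of them. Applied to $\mathcal{E}$, this gives (1)$\Leftrightarrow$(3) directly from Definition~\ref{def:ampleframing}.

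\emph{Step 2: setting up the quotient fan.} By Theorem~\ref{thm:dkk-triangulation}, the cones over cliques form a simplicial fan $\Sigma$ triangulating $C$. Every exceptional route is coherent with every route, so each maximal clique contains $\mathcal{E}$. Hence each maximal cone of $\Sigma$ has the form $\mathrm{cone}(\mathcal{E})+\mathrm{cone}(K)$ for some clique $K$ disjoint from $\mathcal{E}$.

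Replacing each such cone $\sigma$ by $\sigma+L$ yields a collection of cones with common lineality space $L$. I must check that this collection is still a fan, i.e.\ that the cones $\sigma+L$ meet properly. The key point is that every face of $\Sigma$ can be enlarged by $\mathcal{E}$ while staying a clique. With this, the images in $W/L$ form a fan, namely the quotient fan in (2). This verification is the step I expect to be the main obstacle. I would handle it by showing that two cones $(\mathrm{cone}(K)+L)$ and $(\mathrm{cone}(K')+L)$ intersect in $\mathrm{cone}(K\cap K')+L$. To do so, I would use unimodularity and the linear independence of the routes in $K\cup K'\cup\mathcal{E}$ restricted appropriately, working locally near a relative-interior point of $\mathrm{cone}(\mathcal{E})$, where $\Sigma$ looks like its star.

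\emph{Step 3: completeness is equivalent to ampleness.} The union of the quotient cones is the image of $C+L$ in $W/L$. So the fan is complete if and only if $C+L=W$.
\begin{itemize}
\item Suppose $\mathcal{E}$ lies in no facet. Then the sum $p$ of the exceptional routes lies in the relative interior of $C$. For any $w\in W$, the vector $w+tp$ lies in $C$ for large $t$, so $w\in C+L$. Hence the fan is complete.
\item Conversely, suppose $\mathcal{E}$ lies in the facet $\{x_e=0\}$ for some non-idle edge $e$. Then $L\subseteq\{x_e=0\}$, so $C+L\subseteq\{x_e\ge 0\}\cap W$. This is a proper subset of $W$, because $W$ contains vectors with $x_e<0$ (for instance $-\chi_R$ for a route $R$ through $e$). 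Hence the fan is not complete.
\end{itemize}
This proves (1)$\Leftrightarrow$(2) and completes the plan.
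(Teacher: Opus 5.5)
The paper gives no proof of this statement; it is quoted from \cite[Proposition 5]{DKK12}. Your proposal therefore has to stand on its own, and in the paper's standing setting of DAGs without idle edges it does. Two simplifications are worth noting.

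First, Step 3 never uses Step 1. If $\mathcal{E}$ lies in any facet $C\cap\{\ell=0\}$ with $\ell\ge 0$ on $C$, then $C+L\subseteq\{\ell\ge 0\}\subsetneq W$. So (1)$\Leftrightarrow$(2) holds with no hypothesis on idle edges at all.

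Second, Step 2 is just the usual quotient (star) fan of a cone lying in every maximal cone. It needs neither unimodularity nor linear independence of $K\cup K'\cup\mathcal{E}$, which fails in general. Let $p$ be a relative-interior point of $\tau=\mathrm{cone}(\mathcal{E})$. If $x=y+z$ with $y\in\sigma$ and $z\in L$, then $z+tp\in\tau$ for $t\gg 0$, so $x+tp\in\sigma$. Applying this to both $\sigma$ and $\sigma'$ gives $(\sigma+L)\cap(\sigma'+L)=(\sigma\cap\sigma')+L$. Since faces of $\sigma$ containing $\tau$ correspond to faces of $\sigma+L$, the fan property follows.

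The step that is actually wrong is the idle-edge bullet in Step 1. It cannot be repaired, because the statement as written fails in that generality. Suppose an inner vertex $v$ has in- and out-degree one. Then both of its edges are idle, and each of the two inequalities is implied only by the other, so your redundancy argument is circular. Together they can cut out a genuine facet: already for $s\to v\to t$ together with $s\to t$, the face $\{x_{sv}=0\}$ is a facet.

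Here is a framed example where (3) holds but (1) fails.
\begin{itemize}
\item Take inner vertices $a,v,b$ and edges $e_1,e_2\colon s\to a$, $h\colon a\to b$, $c\colon a\to t$, $d\colon s\to b$, $f_1,f_2\colon b\to t$, plus the idle path $g\colon a\to v\to b$.
\item Frame by $e_1\prec e_2$ and $h\prec(a,v)\prec c$ at $a$, and by $h\prec(v,b)\prec d$ and $f_1\prec f_2$ at $b$.
\item The routes $e_1hf_1$, $e_2c$, $df_2$ are exceptional and cover every non-idle edge.
\item No route $e_igf_j$ is exceptional: $e_1gf_1$ conflicts with $e_2hf_2$ at $a$, $e_2gf_2$ conflicts with $e_1c$ at $a$, and $e_1gf_2$ conflicts with $e_2gf_1$ along $g$.
\item Hence $\mathcal{E}$ lies in the face where the flow through $v$ vanishes. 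That face is the flow cone of $G-v$, of dimension $\dim C-1$, so it is a facet.
\end{itemize}

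So drop the bullet and prove (1)$\Leftrightarrow$(3) under the paper's convention that $G$ has no idle edges, where your Step 1 is correct. Alternatively, assume only that no inner vertex has in- and out-degree one. Then every idle coordinate is a sum of non-idle ones, obtained by propagating forward from an in-degree-one head or backward from an out-degree-one tail. With that, your conclusion that facets correspond to non-idle edges goes through.
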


Bell et al.~\cite[Section 3]{Polytopes2} characterized the class of DAGs that admit ample framings, which prompted the following definition. 

\begin{definition}
A DAG $G$ is \defn{full} if for each of its inner vertices $v$, we have $\indeg(v)= \outdeg(v) = 2$.
\end{definition}

The following proposition follows from a result of Bell et al.~\cite{Polytopes2}, namely that the flow polytope for a DAG $G$ is Gorenstein if and only if the in-degree and out-degree of $G$ is equal at each inner vertex of $G$.

\begin{proposition}\label{prop:fullgorenstein}
    The flow polytope for a full DAG is Gorenstein.
\end{proposition}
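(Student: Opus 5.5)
The plan is to derive Proposition~\ref{prop:fullgorenstein} directly from the characterization of Bell et al.~\cite{Polytopes2} quoted just before the statement. That result says $\calF_1(G)$ is Gorenstein exactly when $\indeg(v)=\outdeg(v)$ at every inner vertex $v$ of $G$. A full DAG has $\indeg(v)=\outdeg(v)=2$ at each inner vertex by definition, so the degree condition holds and the conclusion follows. The proof is therefore a one-line specialization, and the only care needed is to check that the hypotheses of the cited characterization match our conventions.

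Concretely, there are two places where the conventions could differ.

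\textbf{Idle edges.} The cited result may be stated for graphs without idle edges, or up to contraction of idle edges. A full DAG has no idle edges, since every inner vertex has in- and out-degree $2$. Moreover, contracting an idle edge is an integral equivalence of flow polytopes, and the Gorenstein property is invariant under lattice equivalence. So the setup of this section is compatible with that of~\cite{Polytopes2}.

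\textbf{Sources and sinks.} Our DAGs are allowed several sources and sinks, while~\cite{Polytopes2} may assume a single source and a single sink. If so, I would adjoin a new source $s$ with an edge to each old source and a new sink $t$ with an edge from each old sink. Strength-one flows on $G$ correspond bijectively and linearly to strength-one flows on the augmented graph: the new edge values are determined by, and determine, the old source outflows and sink inflows. This correspondence is unimodular on the relevant lattices. The old sources and sinks become inner vertices of degree $(1,k)$ or $(k,1)$. Those with $k=1$ give idle edges, which we contract. This is the step I expect to be the main obstacle: the remaining ones have unequal in- and out-degree, so the reduction would break the degree hypothesis. If the cited theorem really only covers the one-source, one-sink case, I would instead adopt that as the standing convention for full DAGs, which is how~\cite{Polytopes2} frames ample framings.

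As an independent sanity check, which also gives the Gorenstein point, consider the all-ones vector $c$ on $E$. At each inner vertex of a full DAG it satisfies flow conservation, since $2=2$. Its strength is the number of edges leaving the sources, say $r$. Thus $c$ lies in the interior of $r\,\calF_1(G)$, and it is a lattice point. The facets of the flow cone are $\{x_e=0\}$ (no idle edges), and every interior lattice point has all coordinates at least $1$. Subtracting $c$ keeps conservation, so interior lattice points are exactly $c$ plus lattice points of the cone. This is the Gorenstein condition with Gorenstein point $(r,c)$, confirming the proposition without relying on the cited theorem.
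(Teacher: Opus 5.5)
Your main line is the paper's own: the paper gives no separate argument and simply specializes the Bell et al.\ criterion ($\indeg(v)=\outdeg(v)$ at every inner vertex) to full DAGs.

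Your closing ``sanity check'' is a genuinely different and fully correct self-contained proof, and it should be the lead argument rather than an afterthought. It only needs that the conservation subspace $L\subseteq\R^E$ contains the strictly positive lattice vector $c$. That alone makes the relative interior of the flow cone $L\cap\R^E_{\geq 0}$ equal to $L\cap\R^E_{>0}$, so the facet description you invoke is not needed. After that, $x\mapsto x-c$ is the required bijection of lattice points. Here the cone over $\calF_1(G)$ is identified with the flow cone via $(t,x)\mapsto x$, since strength is an integral linear functional.

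This argument never looks at how many sources and sinks there are. So the super-source reduction you flagged as the main obstacle is simply unnecessary; you correctly noted it would create unbalanced vertices. In effect you have reproved the ``if'' direction of the Bell et al.\ criterion for arbitrary DAGs, which is all this proposition uses. The citation additionally supplies the converse, which is not needed here.

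Your argument also makes the Gorenstein point explicit. For any ample framing, every edge lies on exactly one exceptional route. Hence $c$ is the sum of the exceptional routes and $r$ is their number. This is precisely the Bruns--R\"omer decomposition whose special simplex the paper later projects along to define the $\bg$-polytope.
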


It was shown in~\cite[Corollary 3.14]{Polytopes2} that a DAG with no idle edges admits an ample framing if and only if $G$ is full.
Furthermore, an ample framing of $G$ induces a labeling of the edges of a full DAG $G$ by $\{1,2\}$ so that at each inner vertex, the two incoming edges have distinct labels, and the two outgoing edges have distinct labels (see~\cite[Corollary 3.11]{Polytopes2}).
Conversely, any edge labeling of $G$ by $1$ and $2$ having the property that at every inner vertex the incoming (and outgoing) edges have distinct labels induces an ample framing of $G$.
In particular, a route in $G$ is exceptional with respect to the ample framing $F$ if and only if the edges in the route have constant labels.

\begin{example}\label{ex:ampleframingpath}
Figure~\ref{fig:framing_edge_labels} shows a full DAG that we refer to as $\Path(3,4,2)$ on the vertex set $\{0,\ldots, 11\}$ (the motivation for this naming convention is explained in Section~\ref{sec:pathcycledags}).
The edge labeling of $G$ by $\{1,2\}$ depicted in the figure has the property that at every inner vertex, the pair of incoming edges (and the pair of outgoing edges) has distinct labels.  
This induces an ample framing $F$ of $G$ such that at each inner vertex of $G$, the incoming and outgoing framing orders are given by the edge labels of the incident edges. 
\end{example}

\begin{figure}
    \centering
    \includegraphics{figures/lab_path_framing_G.tex}
    \caption{The edge labeling of $\Path(3,4,2)$ induced from an ample framing, as discussed in Example~\ref{ex:ampleframingpath}.}
    \label{fig:framing_edge_labels}
\end{figure}

\begin{definition}
Given a DAG $G$, the directed subgraph that is induced by the inner vertices of $G$ is its \defn{inner graph}.
An \defn{inner route} is a maximal directed path of the inner graph.
\end{definition}

\begin{example}
The inner graph of the DAG in Figure~\ref{fig:framing_edge_labels} is shown with bolded edges.
Viewed as an undirected graph, the inner graph is an undirected path from vertex $1$ to vertex $10$.
\end{example}

When the inner graph of a DAG is disconnected, the resulting flow polytope is a join.

\begin{theorem}\label{thm:dagdisconnectjoin}
    If the inner graph of a DAG $G$ is disconnected and the edge set of $G$ is the union of the edge sets of the DAGs $G_1, G_2, \ldots, G_t$ where each $G_i$ has inner graph corresponding to a connected component of the inner graph of $G$, then 
    \[
\calF_1(G) = \calF_1(G_1)\star \calF_1(G_2)\star \cdots \star \calF_1(G_t) \, .
    \]
\end{theorem}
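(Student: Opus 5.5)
The plan is to compare the vertex descriptions of both sides. By the alternative description in Section~\ref{sec:flowpolytopes}, $\calF_1(G)$ is the convex hull of the indicator vectors of routes of $G$ in $\R^E$. Likewise each $\calF_1(G_i)$ is the convex hull of indicator vectors of routes of $G_i$ in $\R^{E_i}$, where $E_i$ is the edge set of $G_i$. I will take the $E_i$ to be pairwise disjoint. This is implicit in the hypothesis, since an edge incident to an inner vertex determines the component containing that vertex, and an edge between two inner vertices lies inside a single component. A source-to-sink edge with no inner endpoint can be assigned to any one $G_i$, and I would note this degenerate case separately. Then $\R^E=\R^{E_1}\oplus\cdots\oplus\R^{E_t}$ as coordinate subspaces, and each $\calF_1(G_i)$ embeds by extending by zero.

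The first step is to show that the routes of $G$ are exactly the disjoint union of the routes of the $G_i$. A route of $G$ runs source, then inner vertices $v_1\to\cdots\to v_k$, then sink. Its inner portion is a directed path in the inner graph, so all $v_j$ lie in one connected component $C_i$. Every edge of the route is incident to a vertex of $C_i$, so the route is a route of $G_i$. Conversely, a route of $G_i$ is a route of $G$. Therefore
\[
\calF_1(G)=\conv\bigl(\calF_1(G_1)\cup\cdots\cup\calF_1(G_t)\bigr),
\]
with the pieces lying in complementary coordinate subspaces.

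The second step is to verify the join structure, by induction on $t$. For $t=2$, write $P=\calF_1(G_1)$ and $Q=\calF_1(G_2)$, and let $\lambda(x)=\sum x_e$, where the sum is over edges of $E_1$ leaving a source. Then $\lambda$ is a primitive integer functional with $\lambda\equiv 1$ on $P$ and $\lambda\equiv 0$ on $Q$.

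Next I check that the affine spans of $P$ and $Q$ are skew. Their direction spaces lie in $\R^{E_1}$ and $\R^{E_2}$, so they meet only in $0$. Moreover, the strength-one functional of $G_2$ is identically $1$ on $\mathrm{aff}(Q)$, while the strength-one functional of $G_1$ is identically $1$ on $\mathrm{aff}(P)$. Hence neither affine span contains the origin of its coordinate block. It follows that any nonzero vector of $\mathrm{aff}(P)$ and any nonzero vector of $\mathrm{aff}(Q)$, together with the two direction spaces, are linearly independent.

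Concretely, fix $p_0\in P$ and $q_0\in Q$ lattice points, and consider the map $x\mapsto\bigl(x|_{E_1}-\lambda(x)p_0,\;x|_{E_2}-(1-\lambda(x))q_0,\;\lambda(x)\bigr)$ on $\mathrm{aff}(P\cup Q)$. This is an integral affine isomorphism, with integral inverse, onto the ambient space of $(P-p_0)\star(Q-q_0)$ as defined in the paper, and it sends $P\cup Q$ to the two layers of that join. This gives the equivalent skew-subspaces description of the join mentioned after Theorem~\ref{thm:brunsromer}.

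For general $t$, apply the same argument to $G_1$ and the union $G_2\cup\cdots\cup G_t$. The second graph is a DAG whose inner graph has components $C_2,\dots,C_t$, so induction gives $\calF_1(G)=\calF_1(G_1)\star\bigl(\calF_1(G_2)\star\cdots\star\calF_1(G_t)\bigr)$. Associativity of the join then yields the statement.

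The main obstacle I expect is bookkeeping rather than substance. One must make precise that the identification is a lattice equivalence and not merely an affine one. This amounts to checking that $\lambda$, together with the coordinate projections to $\R^{E_1}$ and $\R^{E_2}$, generates the integer points of the affine lattice spanned by route vectors. That holds because route indicator vectors are $0/1$ vectors supported in a single block, and $\lambda$ is a $0/1$ coordinate sum that takes the value $1$ on every route of $G_1$ and $0$ on every route of $G_2$.
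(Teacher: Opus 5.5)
Your proposal is correct and follows essentially the same route as the paper: you split $\R^E$ into the coordinate blocks $\R^{E_i}$, use the source-edge functional $\lambda_i$ to place the $\calF_1(G_i)$ in skew affine subspaces at lattice distance one, and then invoke the skew-subspace description of the join. Beyond the paper's argument, you make explicit that the routes of $G$ are exactly the disjoint union of the routes of the $G_i$, you write down an integral affine isomorphism with integral inverse, and you handle general $t$ by induction and associativity rather than treating all blocks at once; these are harmless refinements.
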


\begin{proof}
    We have that 
    \[
    \R^{E(G)}=\R^{E(G_1)}+\R^{E(G_2)}+\cdots +\R^{E(G_t)} \, .
    \]
    Let $\lambda_i$ denote the linear functional on $\R^{E(G)}$ given by the sum of the flow values on the edges incident to the source in $G_i$.
    Each flow polytope $\calF_1(G_i)$ is contained in the affine subspace defined by 
    \[
    V_i:=\{x:\lambda_i(x)=1\}\bigcap \cap_{e\notin G_i}\{x:x_e=0\} \, .
    \]
    These are skew affine subspaces in $\R^{E(G)}$ and $V_i$ is at lattice distance one from any other $V_j$ via the form $\lambda_i$. 
    The result follows.
\end{proof}

\subsection{\texorpdfstring{$\bg$}{g}-polytopes of framed DAGs}

We next define $\bg$-vectors and $\bg$-polyhedra in the setting of framed DAGs.

\begin{definition} \label{def:gvector}
Let $R$ be a route of an amply framed $(G,F)$.
The \defn{$\bg$-vector} of $R$, denoted $\bg(R) = (a_v)_{v} \in \mathbb{Z}^n$, is defined by
\[
a_v = \begin{cases}
    -1, & \hbox{if $R$ has $\stackrel{1}{\longrightarrow} v \stackrel{2}{\longrightarrow}$,} \\
    1, & \hbox{if $R$ has $\stackrel{2}{\longrightarrow} v \stackrel{1}{\longrightarrow}$,} \\
    0, & \hbox{otherwise.}     
\end{cases}
\]
The \defn{$\bg$-polytope} of $(G,F)$ is the convex hull of the $\bg$-vectors of $(G,F)$, which we denote by $\bg(G,F)$. 
\end{definition}

As described by Berggren~\cite{Berggren} and by Braun and Cornejo~\cite{BraunCornejo}, the $\bg$-vector construction agrees with a projection map on the flow polytope of $(G,F)$ along a linear subspace parallel to the affine span of the simplex formed by the exceptional routes, hence this map is of the type given by Theorem~\ref{thm:brunsromer}.
It therefore follows from either Proposition~\ref{prop:fullgorenstein} combined with Theorem~\ref{thm:brunsromer}, or independently from Remark~\ref{rem:gentleg}, that $\bg(G,F)$ is reflexive.
Observe that $\bg(R) =\mathbf{0}$ if and only if $R$ is an exceptional route of $(G,F)$.
Any other route has a $\bg$-vector which is a vertex of the $\bg$-polyhedron, as proved in Corollary~\ref{cor:vertices}.

\begin{remark}\label{rem:gentleg}
The definitions above are special cases of definitions in the more general setting of turbulence polyhedra for gentle algebras~\cite{Berggren}.
The connection is that given an amply framed DAG $G$, we can construct a quiver related to $G$ to obtain a gentle algebra $\Lambda(G)$~\cite{Polytopes2}. 
We may then consider the $\bg$-vector of a route of $G$ to be the $\bg$-vector of the corresponding route of the blossoming algebra of $\Lambda(G)$, and we may consider the triangulated $\bg$-polytope to be a quotient of the flow polytope $\mathcal F_1(G)=\mathcal F_1(\Lambda(G))$.
It was proved in~\cite{AHIKM,Berggren} that the $\bg$-polytopes of representation-finite gentle algebras, and hence also of amply framed DAGs, are reflexive.
\end{remark}

An immediate consequence of Theorem~\ref{thm:brunsromer} is that a DKK triangulation of the flow polytope for an amply framed DAG induces a unimodular triangulation of the $\bg$-polytope.
We record this formally in the following proposition.

\begin{proposition}\label{prop:dkkgpolytope}
    Given an amply framed DAG, for each maximal clique $M$ we define the \defn{$\bg$-simplex} $\bg(M)$ whose vertices are the $\bg$-vectors of routes of $M$. 
    The set of $\bg$-simplices forms a unimodular triangulation of the $\bg$-polytope of $G$, which we call the \defn{DKK triangulation}.
\end{proposition}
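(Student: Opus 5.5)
The plan is to deduce Proposition~\ref{prop:dkkgpolytope} from Theorem~\ref{thm:brunsromer}, applied to $P=\calF_1(G)$ with its DKK triangulation $T=\DKK(G,F)$. The flow polytope is Gorenstein by Proposition~\ref{prop:fullgorenstein}, since an amply framed DAG without idle edges is full. By Theorem~\ref{thm:dkk-triangulation}, $\DKK(G,F)$ is a regular unimodular triangulation, so $\calF_1(G)$ has the integer decomposition property and all hypotheses of Theorem~\ref{thm:brunsromer} hold.

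The first step is to identify the special simplex. I take $\Delta$ to be the convex hull of the indicator vectors of the exceptional routes. In an ample framing these are exactly the routes with constant label $1$ or constant label $2$. I will check directly that every non-idle edge lies in exactly one exceptional route: at each inner vertex the two outgoing edges carry distinct labels, and likewise the two incoming edges, so each edge extends uniquely to a constant-label route. Consequently the sum of the exceptional indicator vectors is the all-ones flow. Its strength is the number $r$ of exceptional routes, and it lies in the interior of the cone, because the facets of the cone are the coordinate hyperplanes $x_e=0$. I will then verify that $(r,\mathbb{1})$ is the Gorenstein point $c$. One could cite \cite{Polytopes2}, but it also follows from the observation that shifting by $\mathbb{1}$ sets up a bijection between lattice flows and strictly positive lattice flows. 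This gives $c=\sum_i (1,y_i)$ with the $y_i$ the exceptional routes.

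The second step is to match the projection. As stated after Definition~\ref{def:gvector}, citing \cite{Berggren,BraunCornejo}, the map $R\mapsto \bg(R)$ is the linear projection along the subspace parallel to $\mathrm{aff}(\Delta)$. In particular it sends every exceptional route to $\mathbf{0}$, and the image of $\calF_1(G)$ is $\bg(G,F)$. I would justify this by checking that the linear map $x\mapsto (x_{e}$ summed appropriately$)$, defined by $a_v = x_{2\text{-in}}-x_{1\text{-in}}$ restricted to the outgoing side, vanishes exactly on the span of the exceptional route differences. A dimension count then finishes this step, since $\dim\calF_1(G)-(r-1)=n$, the number of inner vertices.

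The third step, which I expect to be the main obstacle, is to show that $\DKK(G,F)$ is compatible with the join structure of Theorem~\ref{thm:brunsromer}. Every exceptional route is coherent with all routes, so it belongs to every maximal clique. Hence every maximal simplex of $T$ has the form $\Delta\star\sigma$, where $\sigma$ is spanned by the non-exceptional routes of the clique, and these $\sigma$ form the sphere $S$. Theorem~\ref{thm:brunsromer} then says the projection carries $T$ to a unimodular triangulation whose maximal cells are the cones from $\mathbf{0}=\bg(\text{exceptional})$ over the images of the $\sigma$. These cells are exactly $\mathrm{conv}(\bg(M))$, so the $\bg$-simplices $\bg(M)$ form the claimed unimodular triangulation. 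The delicate point is that this step needs the triangulation in the theorem to be the given $T$ rather than merely some triangulation. I will handle this directly: unimodularity of $\Delta\star\sigma$, together with $\Delta$ sitting at lattice distance one, implies that the projected simplex is unimodular in the quotient lattice. Each cell of $T$ meets the interior of $\Delta$'s fibres, so the images cover $\bg(G,F)$ without overlap.
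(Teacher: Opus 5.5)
Your proposal is correct and follows the paper's route. The paper records this proposition as an immediate consequence of Theorem~\ref{thm:brunsromer}, using that the $\bg$-map is the projection along the special simplex of exceptional routes; your added observation that every exceptional route lies in every maximal clique, so that $\DKK(G,F)$ already has the join form $\Delta\star S$, is exactly what makes that one-line deduction legitimate. Two small repairs: the coordinate formula should read $a_v = x_{\mathrm{in}_2(v)} - x_{\mathrm{out}_2(v)}$, and the non-overlap of the projected cells is best justified by noting that the barycenter $\frac{1}{r}\mathbb{1}$ of $\Delta$ lies in the relative interior of $\calF_1(G)$, so the link of $\Delta$ projects to a complete fan.
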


A description of the families of routes that lie on a common facet of the $\bg$-polytope is the following, which for the general case of Gorenstein flow polytopes was proved by Braun and Cornejo~\cite{BraunCornejo}; for the special case of amply framed DAGs, this was independently proved by Berggren~\cite{Berggren}.

\begin{proposition}\label{prop:dagfacesgpolytope}
A set of routes $\mathcal{S}$ lie on a common facet of the $\bg$-polytope of an amply framed DAG $(G,F)$ if and only if for every exceptional route $R$ in $(G,F)$, there is an edge $e_R$ in $R$ such that $e_R$ is not in any of the routes in $\mathcal{S}$.
In other words, there exists a transversal for the exceptional routes that is avoided by $\mathcal{S}$.
\end{proposition}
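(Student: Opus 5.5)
We would prove Proposition~\ref{prop:dagfacesgpolytope} by passing through the flow polytope and the projection of Theorem~\ref{thm:brunsromer}. Let $\pi:\calF_1(G)\to \bg(G,F)$ be the projection along the affine span of the simplex $\Delta$ spanned by the exceptional routes; on indicator vectors it sends a route $R$ to $\bg(R)$. Since $\Delta$ is special, every facet of $\calF_1(G)$ meets $\Delta$ in a facet of $\Delta$. This should give a bijection between facets of $\bg(G,F)$ and the faces of $\calF_1(G)$ that are maximal among those disjoint from $\Delta$: such a face is the preimage of a facet of the reflexive polytope, and its vertices are exactly the routes whose $\bg$-vectors lie on that facet. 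So we would first establish the reduction that a set of routes $\mathcal S$ lies on a common facet of $\bg(G,F)$ if and only if the indicator vectors of $\mathcal S$ lie on a common face of $\calF_1(G)$ containing no exceptional route. The forward direction is immediate from preimages. For the reverse we use that $\Delta\star S$ is the join structure of Theorem~\ref{thm:brunsromer}, so any face missing $\Delta$ lies in the boundary sphere part and projects into a proper face.

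Next we translate faces of $\calF_1(G)$ into edge data. Faces of a flow polytope are cut out by the equations $x_e=0$. Hence the minimal face containing $\mathcal S$ is $\{x\in\calF_1(G): x_e=0 \text{ for all } e\notin \bigcup_{R\in\mathcal S}R\}$, and its vertices are the routes using only edges appearing in routes of $\mathcal S$. This face avoids every exceptional route exactly when each exceptional route $R$ uses some edge $e_R$ lying in no route of $\mathcal S$. That is the transversal condition. Combining this with the first step gives both directions: if a transversal exists, the minimal face of $\mathcal S$ misses $\Delta$ and so maps into a facet. Conversely, if $\mathcal S$ lies on a facet, its minimal face lies in the preimage face, which misses $\Delta$, so every exceptional route must use an edge outside $\bigcup\mathcal S$.

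The main obstacle is making the reduction rigorous, namely that a face of $\calF_1(G)$ disjoint from $\Delta$ projects into a proper face of $\bg(G,F)$. We would handle this via the Gorenstein point $c=\sum_i (1,y_i)$, with the $y_i$ the exceptional routes. Each facet of $\mathrm{cone}(\calF_1(G))$ has a primitive inequality $\ell_F$ with $\ell_F(c)=1$, so exactly one exceptional route violates equality on each facet. Given a face $\Phi$ missing all $y_i$, we form $\ell=\sum_{F\supseteq\Phi}\ell_F$ restricted appropriately. Alternatively, we choose facets $F_1,\dots,F_r$ containing $\Phi$ with $y_i\notin F_i$, and take the functional $\sum_j \ell_{F_j}$. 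This functional vanishes on $\Phi$, takes value $1$ on each $y_i$, hence is constant on $\Delta$ and descends to the quotient, where it defines a supporting hyperplane of the reflexive polytope at distance one. We expect this gluing of facet inequalities to be the delicate point. If needed, we would instead cite the explicit facet description for Gorenstein flow polytopes from \cite{BraunCornejo} or \cite{Berggren}.
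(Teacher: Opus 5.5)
Your argument is correct. It is not the paper's route, though: the paper gives no proof and imports the statement from Braun--Cornejo (general Gorenstein flow polytopes) and from Berggren (turbulence polyhedra of gentle algebras). You instead give a self-contained polyhedral derivation. Its only external input is the paper's assertion that $\bg$ is the projection of $\calF_1(G)$ along the direction $U$ of $\mathrm{aff}(\Delta)$. Your pullback argument for the forward direction is sound, and so is your identification of the minimal face containing $\mathcal S$ with the routes contained in $E_{\mathcal S}$.

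The step you call delicate is easy here, because the exceptional routes partition $E$. For a transversal $\{e_R\}$, set $L(x)=\sum_R x_{e_R}$. Then $L$ is nonnegative on $\calF_1(G)$, vanishes on every route inside $E_{\mathcal S}$, and equals exactly $1$ on each exceptional route. Hence $L$ vanishes on $U$ and descends to an affine function on $\bg(G,F)$ that is $0$ on $\bg(\mathcal S)$ and $1$ at $\bzero$. Its zero set is therefore a proper face containing $\bg(\mathcal S)$. This is exactly your second choice $\sum_j\ell_{F_j}$ with $F_j=\{x_{e_{R_j}}=0\}$. Some clean-up:
\begin{itemize}
\item Discard the first suggestion $\sum_{F\supseteq\Phi}\ell_F$. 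Its value on $y_i$ is the number of facets containing $\Phi$ that miss $y_i$, which in general varies with $i$, so it does not descend.
\item Drop the appeal to the join $\Delta\star S$, which is not needed.
\item State that the descent is affine rather than linear: the functional is $1$, not $0$, on $\Delta$.
\end{itemize}

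As for what each approach buys: your argument uses only that the routes spanning $\Delta$ are edge-disjoint and cover $E$. So it applies verbatim to any flow polytope projected along such a simplex, and it makes the statement checkable without the gentle-algebra or general Bruns--R\"omer machinery. The cited proofs instead place the result inside frameworks that the paper reuses elsewhere, for instance in Proposition~\ref{prop:faces-edge-sets}.
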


\begin{figure}
    \centering
    \begin{tikzpicture}
    \begin{scope}[xshift=0, yshift=0, scale=1.25]    
        \vertex[fill](a1) at (1,0) {};
        \vertex[fill](a2) at (2,0) {};
        \vertex[fill](a3) at (3,0) {};
        \vertex[fill](a4) at (4,0) {};

        \draw[-stealth, thick] (a1) to[out=60,in=120]  (a2) node[xshift=-5, yshift=12] {\tiny $2$};
        \draw[-stealth, thick] (a1) to[out=0,in=180] (a2) node[xshift=-10, yshift=4] {\tiny $1$};
        \draw[-stealth, thick] (a1) to[out=60,in=120]  (a3) node[xshift=-3, yshift=11] {\tiny $1$};
        
        \draw[-stealth, thick] (a2) to[out=0,in=180] node[yshift=4] {\tiny $2$} (a3) ;
        \draw[-stealth, thick] (a2) node[xshift=8, yshift=-4.5] {\tiny $1$} to[out=-60,in=-120] (a4);

        \draw[-stealth, thick] (a3) node[xshift=6, yshift=4] {\tiny $1$} to[out=0,in=180] (a4);
        \draw[-stealth, thick] (a3) node[xshift=10, yshift=-4] {\tiny $2$} to[out=-60,in=-120]  (a4);
    \end{scope}

    \begin{scope}[xshift=130, yshift=0, scale=1.25]    
        \vertex[fill](a1) at (1,0) {};
        \vertex[fill](a2) at (2,0) {};
        \vertex[fill](a3) at (3,0) {};
        \vertex[fill](a4) at (4,0) {};

        \draw[-stealth, thick] (a1) to[out=60,in=120]  (a2) node[xshift=-5, yshift=12] {\tiny $2$};
        \draw[-stealth, thick,dashed] (a1) to[out=0,in=180] node[yshift=-6] {} (a2) node[xshift=-10, yshift=4] {\tiny $1$};
        \draw[-stealth, thick,dashed] (a1) to[out=60,in=120] node[yshift=6] {}  (a3) node[xshift=-3, yshift=11] {\tiny $1$};
        
        \draw[-stealth, thick] (a2) to[out=0,in=180] node[yshift=4] {\tiny $2$} (a3) ;
        \draw[-stealth, thick] (a2) node[xshift=8, yshift=-4.5] {\tiny $1$} to[out=-60,in=-120] (a4);

        \draw[-stealth, thick] (a3) node[xshift=6, yshift=4] {\tiny $1$} to[out=0,in=180] (a4);
        \draw[-stealth, thick,dashed] (a3) node[xshift=10, yshift=-4] {\tiny $2$} node[yshift=-10] {} to[out=-60,in=-120]  (a4);
    \end{scope}

    \begin{scope}[xshift=260, yshift=0, scale=1.25]    
        \vertex[fill](a1) at (1,0) {};
        \vertex[fill](a2) at (2,0) {};
        \vertex[fill](a3) at (3,0) {};
        \vertex[fill](a4) at (4,0) {};

        \draw[-stealth, thick] (a1) to[out=60,in=120]  (a2) node[xshift=-5, yshift=12] {};
        \draw[-stealth, thick,dashed] (a1) to[out=0,in=180] node[yshift=-6] {$e_{R_1}$} (a2) node[xshift=-10, yshift=4] {};
        \draw[-stealth, thick,dashed] (a1) to[out=60,in=120] node[yshift=6] {$e_{R_3}$}  (a3) node[xshift=-3, yshift=11] {};
        
        \draw[-stealth, thick] (a2) to[out=0,in=180] node[yshift=4] {} (a3) ;
        \draw[-stealth, thick] (a2) node[xshift=8, yshift=-4.5] {} to[out=-60,in=-120] (a4);

        \draw[-stealth, thick] (a3) node[xshift=6, yshift=4] {} to[out=0,in=180] (a4);
        \draw[-stealth, thick,dashed] (a3) node[xshift=10, yshift=-4] {} node[yshift=-10] {$e_{R_2}$} to[out=-60,in=-120]  (a4);
    \end{scope}

\end{tikzpicture}
    \caption{The framed DAG discussed in Example~\ref{ex:transversal}, including  a collection of dashed edges $\{e_{R_1},e_{R_2},e_{R_3}\}$ that give a transversal of the exceptional routes.}
    \label{fig:facettransversal}
\end{figure}

\begin{example}\label{ex:transversal}
The left side of Figure~\ref{fig:facettransversal} depicts an amply framed DAG.
The middle picture is an example of a transversal of the exceptional routes of that DAG. 
The right side of the figure has dashed edges where the two routes formed by the solid edges lie on a common facet of the $\bg$-polytope since they avoid the edges given by $\{e_{R_1},e_{R_2},e_{R_3}\}$.
\end{example}

In the case where an amply framed DAG has disconnected inner graph, the $\bg$-polytope of $G$ decomposes as a free sum of other $\bg$-polytopes as follows.

\begin{theorem}\label{thm:dagdisjointgfreesum}
       If the inner graph of an amply framed DAG $(G,F)$ is disconnected and the edge set of $G$ is the union of the edge sets of the DAGs $G_1, G_2, \ldots, G_t$ where each $G_i$ has inner graph corresponding to a connected component of the inner graph of $G$ and framing $F_i$ induced by $F$, then 
    \[
\bg(G,F) = \bg(G_1,F_1)\oplus \bg(G_2,F_2)\oplus \cdots \oplus \bg(G_t,F_t) \, .
    \]
\end{theorem}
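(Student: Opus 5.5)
The plan is to combine Theorem~\ref{thm:dagdisconnectjoin}, which identifies $\calF_1(G)$ with the join $\calF_1(G_1)\star\cdots\star\calF_1(G_t)$, with Proposition~\ref{prop:joinfreesumgorenstein}. That proposition says that projecting a join along the join of special simplices yields the free sum of the individual reflexive projections. By induction on $t$ it suffices to treat $t=2$, since both join and free sum are associative.

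\textbf{Step 1: each $G_i$ is amply framed.} An inner vertex $v$ of $G$ lies in exactly one component of the inner graph, hence in exactly one $G_i$. All edges incident to $v$ lie in $G_i$, and so $v$ is an inner vertex of $G_i$ with the same in- and out-edges. Therefore $F$ restricts to a framing $F_i$ of $G_i$, and $G_i$ is full. The induced $\{1,2\}$-labeling still has distinct labels at each inner vertex, so $F_i$ is ample.

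\textbf{Step 2: routes, exceptional routes, and $\bg$-vectors decompose.} Every route of $G$ starts at a source and passes only through inner vertices of a single component, so it is a route of exactly one $G_i$. Conversely, every route of $G_i$ is a route of $G$. A route is exceptional exactly when its labels are constant, and this is a property of the route alone. Hence the exceptional routes of $G$ are the disjoint union of the exceptional routes of the $G_i$. Moreover, $\bg(R)$ has nonzero coordinates only at inner vertices of the component containing $R$. This gives $\R^n=\R^{n_1}\oplus\cdots\oplus\R^{n_t}$, with $\bg_G(R)=(0,\ldots,\bg_{G_i}(R),\ldots,0)$.

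\textbf{Step 3: identify the convex hull.} The vertices of $\bg(G,F)$ are therefore the union of the embedded vertex sets of the $\bg(G_i,F_i)$, and each of these polytopes contains the origin in its interior because it is reflexive. The convex hull of $\bigcup_i \{0\}\times\cdots\times\bg(G_i,F_i)\times\cdots\times\{0\}$ is by definition $\bg(G_1,F_1)\oplus\cdots\oplus\bg(G_t,F_t)$. This direct argument already proves the equality. Alternatively, one can invoke Proposition~\ref{prop:joinfreesumgorenstein} with the special simplices spanned by the exceptional routes, whose join is the exceptional simplex of $G$ by Step 2.

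The only delicate point is Step 2's claim that no route of $G$ meets two components. This holds because consecutive inner vertices on a route are joined by an edge of the inner graph.
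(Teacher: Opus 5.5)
Your proof is correct, but its main line differs from the paper's. The paper argues structurally, in three moves:
\begin{enumerate}
\item It writes $\calF_1(G)$ as a join via Theorem~\ref{thm:dagdisconnectjoin}.
\item It treats the $\bg$-vector map as the Bruns--R\"omer projection along the exceptional simplex. It notes that, since $\calF_1(G)$ sits at lattice distance one from the origin, projecting along the linear span of that simplex agrees with projecting along its affine span.
\item It applies Proposition~\ref{prop:joinfreesumgorenstein}: projecting a join along the join of special simplices gives the free sum of the projections.
\end{enumerate}
This is exactly the ``alternative'' you mention at the end of Step~3.

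You instead work directly from Definition~\ref{def:gvector}. Every route meets inner vertices of only one component, so its $\bg$-vector is supported on that component's coordinate block and equals there its $\bg$-vector in $(G_i,F_i)$. Hence $\bg(G,F)$ is literally the convex hull of the coordinate-embedded copies of the $\bg(G_i,F_i)$, which is the free sum by definition.

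What your route buys:
\begin{itemize}
\item It gives an honest equality of subsets of $\R^n$ with no appeal to the Gorenstein/special-simplex machinery. Reflexivity is used only to certify that $\bzero$ is interior, so that $\oplus$ is defined.
\item It makes the reduction to $t=2$ unnecessary.
\item Step~3 really only needs that the generating sets (all $\bg$-vectors of routes) decompose. Your statement about vertex sets is true, but it relies on the later Corollary~\ref{cor:vertices}.
\end{itemize}

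What the paper's route buys: through Proposition~\ref{prop:joinfreesumgorenstein} it also shows that the unimodular triangulations induced from the flow polytopes are compatible with the join/free-sum decomposition. Your coordinate argument does not address this, but the statement does not require it.
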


\begin{proof}
    This is an immediate consequence of applying to Definition~\ref{def:gvector} both Proposition~\ref{prop:joinfreesumgorenstein} and Theorem~\ref{thm:dagdisconnectjoin}.
    Note that in this case, since the flow polytope of $G$ is contained in an affine subspace at lattice distance one from the origin, projecting along the linear subspace parallel to the affine span of the special simplex in Theorem~\ref{thm:brunsromer} is equivalent to projecting along the linear span of the special simplex.
\end{proof}

\section{Characterizing amply framed DAGs with locally anti-blocking \texorpdfstring{$\bg$}{g}-polytopes}\label{sec:labdags}

Throughout this section, we let $G$ be a full DAG with no idle edges, let $F$ be an ample framing of $G$, and let $P$ be its $\bg$-polytope. 
We also assume without loss of generality that the inner graph of $G$ is connected, which is sufficient due to Theorems~\ref{thm:dagdisconnectjoin} and~\ref{thm:dagdisjointgfreesum}.
We say that $F$ is a \defn{locally anti-blocking framing} of $G$ if $P$ is locally anti-blocking.
In this section, we will characterize the full DAGs that admit a locally anti-blocking framing.
We begin by setting up needed notation, then proceed to prove our characterization theorem.

\subsection{Path and Cycle DAGs}\label{sec:pathcycledags}

Recall from Section~\ref{sec:flowpolytopes} that an ample framing of a DAG induces a labeling on the edges of $G$ by $\{1,2\}$ such that the pair of incoming edges (and the pair of outgoing edges) have distinct labels.
We will frequently need to consider ample framings where the framing labels are constant on each inner route, leading to the following definition.

\begin{definition}
Let $G$ be an amply framed DAG. 
A \defn{leg} of $G$ is an inner route whose framing labels are constant.
A leg whose framing labels are $1$ is called a \defn{$1$-leg}, while a leg whose framing labels are $2$ is called a \defn{$2$-leg}.
An exceptional route in $G$ is called \defn{long} if it contains a leg of $G$, and \defn{short} otherwise.
\end{definition}

See Example~\ref{eg:legs} for examples.
As a consequence of Theorem~\ref{thm:lab-dag-characterization}  and Lemma~\ref{lem:lab-iff-<3switch} below, in a DAG with a locally anti-blocking framing, the intersection of a route with the inner graph is contained in exactly one leg. 
In this way each route has a single leg on which it resides, leading to the following definition.

\begin{definition}
Given a locally anti-blocking amply framed DAG, we will refer to a route as being \defn{on a $1$-leg} or \defn{on a $2$-leg} if the route resides on a $1$-leg or $2$-leg, respectively.
\end{definition}

We next define path and cycle DAGs, which will play a key role in our classification.

\begin{definition}\label{def:dag-labeling}
Given a sequence $\bk = (k_1, \dots, k_r)$ of positive integers, let \defn{$\Path(\bk)$} denote the full DAG whose inner graph consists of $r$ inner routes of lengths $k_1,\ldots, k_r$ concatenated so that when the inner graph is viewed as an undirected graph, it is an undirected path of length $k_1+\cdots +k_r$.
We note that the special case $\Path(n-1)$ is the \defn{caracol graph} on the vertex set $\{0,\ldots, n+1\}$ with edge multiset $\{(i,i+1) : i=0,\ldots, n \} \cup \{(0,i), (i,n+1)  : i=1,\ldots, n \}$.

Similarly, let \defn{$\Cycle(\bk)$} denote the full DAG whose inner graph consists of $r$ inner routes of lengths $k_1,\ldots, k_r$ concatenated so that when the inner graph is viewed as an undirected graph, it is an undirected cycle of length $k_1+\cdots + k_r$.
Note that $r$ is necessarily even in the definition of $\Cycle(\bk)$.
Also, if $\bk$ is cyclically equivalent to $\bk'$ by a cyclic shift of even length, then $\Cycle(\bk) = \Cycle(\bk')$.
\end{definition}

We will require later in this work a naming convention for edges that are adjacent to the source or sink in the case where every inner route is a leg.
For a DAG where every inner route is a leg, if the source/sink edge is on a short exceptional route and is incident to vertex $i$, we name this edge $i$.
If the source/sink edge is on a long exceptional route and is incident to vertex $i$, then its label is as follows:
\begin{itemize}
    \item $i^-$, for a source edge with framing $2$;
    \item $i^+$, for a sink edge with framing $2$;
    \item $i^+$, for a source edge with framing $1$;
    \item $i^-$, for a sink edge with framing $1$.
\end{itemize}
With this convention, two edges adjacent to the source (or two edges adjacent to the sink) will never have the same label.
However, the two edges on a short exceptional will have the same label.
See Figure~\ref{fig:legs} for an example of our edge-naming conventions.

\begin{example}\label{eg:legs}
Let $\Path(3,4,2)$ have the ample framing given in Figure~\ref{fig:framing_edge_labels}.
Observe that every inner route for this framing is a leg.
We depict $\Path(3,4,2)$ using the same ample framing in Figure~\ref{fig:legs}, where we omit the framing labels and instead include the source- and sink-edge names shown in red using our convention.
Further, an example of a route on a $1$-leg is depicted as a closely dotted line.
An example of a long exceptional route with framing label $2$ is depicted as a solid black line.
An example of a short exceptional route with framing label $1$ is depicted as a loosely dashed line.
\end{example}

\begin{figure}
    \centering
    \includegraphics{figures/legs.tex}
    \caption{The DAG from Example~\ref{eg:legs}.
    The source-adjacent and sink-adjacent edge labeling of $G$ is shown in red.}
    \label{fig:legs}
\end{figure}

The purpose of distinguishing the long exceptional routes of $G$ is that each such route $R$ functions much like the spine of the graph one gets by restricting $G$ to the subgraph induced by $R$ and all edges incident to $R$. 
The terminology \emph{long} and \emph{short} comes from an equivalent way to define these routes in the locally anti-blocking case which follows from results in the next subsection; in particular, an exceptional route is short if it has exactly two edges, and it is long otherwise.

We next describe two conventions for drawing and labeling $\Path(\bk)$ and $\Cycle(\bk)$.
We call the first convention the \defn{horizontal layout}, and examples are shown in Figure~\ref{fig:lab-dags-path-horizontal}.
For the horizontal layout, we draw the inner path using vertex labels $1,2,\ldots$ where the inner routes are drawn from top to bottom, going left-to-right for the first inner route and switching horizontal direction at each new inner route.
In the resulting drawing, all edges are directed from left to right.
We then add a source visualized on the left and a sink visualized on the right, with source- and sink-edges added as needed to make the resulting DAG full.
For our representations of path DAGs, we will omit the source and sink and only draw half-edges to depict the edges entering and leaving the inner vertices.

We will also introduce the following convention for framing a path DAG in the horizontal layout, which we call the \defn{lab framing}.
Frame the edges in the first, third, fifth, etc.~inner routes, i.e., those with negative slope in our picture, with a $2$.
Frame the edges in the second, fourth, sixth, etc.~inner routes, i.e., those with positive slope in our picture, with a $1$.
This labeling can be completed to an ample framing by labeling the source and sink edges so that when going around any vertex, the labels alternate as shown in Figure~\ref{fig:lab-dags-path-horizontal}.

For the horizontal layout of a cycle DAG $\Cycle(k_1,\ldots, k_r)$, we use an almost identical process, though the final inner route of length $k_r$ must be drawn connecting the lower-right vertex with the upper-left vertex in the figure.
We use an identical framing method for cycle DAGs as for path DAGs.
One may visualize $\Cycle(\bk)$ as if it were embedded on a cylinder so that the slope-interpretation of the labelings of inner edges are consistent with the framing.
See Figure~\ref{fig:lab-dags-path-horizontal} for an example.

\begin{figure}
    \centering
    \includegraphics[height=2.3in]{figures/lab_path_framing_horizontal.tex}
    \hspace{1.2cm}
    \includegraphics[height=2.3in]{figures/lab_cycle_horizontal.tex}
     \caption{
    The figure on the left shows the inner graph of $\Path(3,4,2)$ with a horizontal layout, and the figure on the right shows the inner graph of $\Cycle(3,4,2,3)$ with a horizontal layout.
    Both figures show our standard convention for the framing edge labels, which (as we will see) are a locally anti-blocking framing.
    }
    \label{fig:lab-dags-path-horizontal}
\end{figure}

We call the second convention for representing path and cycle DAGs the \defn{circular layout}; examples are shown in Figure~\ref{fig:lab-dags-path-circular}.
For the circular layout, we draw the inner path using vertex labels $1,2,\ldots$ along a circle placed in a counter-clockwise direction (we circle the vertex names to distinguish them from framing and edge labels).
We direct the edges along the first inner route of length $k_1$ from $i$ to $i+1$, and we reverse the direction of the edges each time we switch from one inner route to another in the path or cycle DAG.
We visualize the source as being the center point of the circle, and the sink as being a point at infinity, with source- and sink-edges added as needed to make the resulting DAG full.
As with the horizontal layout of path and cycle DAGs, we will omit the source and sink and only draw half-edges to depict the edges entering and leaving the inner vertices.

We use the same lab framing as previously defined for a path or cycle DAG in the circular layout.
To obtain the lab framing in the circular layout, frame the edges in the first, third, fifth, etc.~inner routes, where we move counter-clockwise starting from vertex $1$, with the label $2$.
Frame the edges in the second, fourth, sixth, etc.~inner routes with a $1$.
This labeling can be completed to an ample framing by labeling the source and sink edges so that when going around any vertex the labels alternate, as shown in Figure~\ref{fig:lab-dags-path-circular}.

\begin{figure}
    \centering
    \includegraphics[height=2.3in]{figures/lab_path_framing_circular.tex}
    \hspace{1.2cm}
    \includegraphics[height=2.3in]{figures/lab_cycle_circular.tex}
     \caption{
    The figure on the left shows the inner graph of $\Path(3,4,2)$ with a circular layout, and the figure on the right shows the inner graph of $\Cycle(3,4,2,3)$ with a circular layout.
    Vertex names are circled to distinguish them from the framing labels.
    Both figures show our standard convention for the framing edge labels, which (as we will see) are a locally anti-blocking framing.
    }
    \label{fig:lab-dags-path-circular}
\end{figure}

\subsection{Characterizing DAGs that admit a locally-anti-blocking framing}

The following theorem gives the main result of this section, that the DAGs $\Path(\bk)$ and $\Cycle(\bk)$ are precisely those that produce locally anti-blocking $\bg$-polytopes.

\begin{theorem}\label{thm:lab-dag-characterization} 

If $G$ is a full DAG with no idle edges and connected inner graph, then $G$ admits a locally anti-blocking framing if and only if $G$ is equal to $\Path(\bk)$ or $\Cycle(\bk)$.
\end{theorem}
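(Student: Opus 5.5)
The proof will run through the vertex criterion of Proposition~\ref{prop:lab-iff-projections}. The vertices of $P$ are the nonzero $\bg$-vectors of routes, so $P$ is locally anti-blocking if and only if, for every route $R$ and every set $I$ of inner vertices, zeroing the coordinates of $\bg(R)$ indexed by $I$ gives a point of $P$. Call an inner vertex $v$ on $R$ a \emph{switch} of $R$ if the label changes at $v$, that is, $\bg(R)_v=\pm1$. Then $\bg(R)$ is a $\{0,\pm1\}$-vector supported on the switches of $R$.

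The first step is to show that local anti-blocking forces every route to have at most two switches, with opposite signs if there are two. This is the content I expect from the upcoming Lemma~\ref{lem:lab-iff-<3switch}. Reflexivity does much of the work. The facets of $P$ are at lattice distance one from the origin, and by Proposition~\ref{prop:dagfacesgpolytope} they are described by transversals of the exceptional routes. Each facet inequality is then a linear functional $\ell$ with $\ell\le 1$ on $P$, and its coefficients can be read off from which labeled edges at each vertex lie in the transversal. Now suppose $R$ has switches $v_1,\dots,v_m$ in order. Consecutive switches have opposite signs, so if $m\ge 3$ there are switches of the same sign at $v_i$ and $v_{i+2}$. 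I would zero out $v_{i+1}$ (and possibly more coordinates) and exhibit a transversal functional taking value $2$ on the resulting point, which is a contradiction.

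For the converse direction of this step, suppose every route has at most two switches. Zeroing coordinates of $\bg(R)$ then gives either $0$, which is in $P$ as the $\bg$-vector of an exceptional route, or a single $\pm e_v$. So it suffices to show that $\pm e_v$ is the $\bg$-vector of some route. This is easy: take the route that enters $v$ along the label-$1$ (respectively label-$2$) exceptional route and leaves along the other one. Since the inner vertices have in- and out-degree $2$, the exceptional routes through $v$ exist.

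The second step translates "every route has at most two switches" into the shape of $G$. A route whose label changes at $u$, runs along an inner route, and changes again at $w$ produces switches. If some inner vertex $v$ had three or more inner neighbors, or the inner graph contained a vertex at which labels could be combined into a walk with three alternations, we would get a route with at least three switches. Inner vertices have total degree $4$, with two in-edges and two out-edges. Having at most two inner neighbors (one in, one out, or two in, or two out), together with connectivity, forces the underlying undirected inner graph to have maximum degree $2$. It is therefore a path or a cycle, and the maximal directed segments are the $k_i$, so $G=\Path(\bk)$ or $\Cycle(\bk)$.

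For the reverse implication of the theorem, I would take the lab framing described above, in which every inner route is a leg. Any route enters the inner graph, travels along a portion of a single leg (it can only pass between legs at a vertex where both inner edges point in or both point out, and this is impossible for a directed path), and leaves. Its label can change only at its entry and exit vertices, giving at most two switches with opposite signs, so the converse of the first step applies. The main obstacle is the forward part of the first step: constructing the explicit violating transversal from a route with three switches, and, in the second step, ruling out inner vertices of undirected degree $3$ or $4$ by producing such a route for every ample framing rather than just one.
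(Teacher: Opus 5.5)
\textbf{Gap: the forward half of your first step is not proved.} Your overall structure matches the paper's. It uses the intermediate criterion ``every route switches labels at most twice,'' realizes $\pm e_v$ by single-switch routes for the converse, and observes that when every inner route is a leg a route can only switch at its entry and exit vertices. But the decisive direction, locally anti-blocking implies at most two switches, is missing. You say you would exhibit a transversal functional taking value $2$ on a zeroed-out vector. However, Proposition~\ref{prop:dagfacesgpolytope} only says which routes share a facet, not what the facet's equation is, and you neither derive the functional nor choose the transversal. You flag this yourself as the main obstacle, and without it nothing forces the shape of $G$.

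\textbf{The paper's fix.} The paper uses Lemma~\ref{lem:daggpolytopeintegerpoints}: the only lattice points of $P$ are $\mathbf{0}$ and the $\bg$-vectors of routes. This follows from reflexivity together with the Bruns--R\"omer description of the boundary. With it, no facet equation is needed. Suppose $R$ has switches $v_1,\dots,v_m$ with $m\ge 3$, and zero the coordinate at $v_2$. The result is a nonzero lattice point. Read its nonzero entries in the order of $R$; since $G$ is acyclic, this is the only order in which any route can visit those vertices. Two consecutive entries then have equal sign, so the point is not a $\bg$-vector, hence not in $P$, and Proposition~\ref{prop:lab-iff-projections} finishes.

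\textbf{Completing your own approach instead.} Let $\lambda$ be the total flow out of the source, and set $\psi_T(x)=\lambda(x)-\sum_{e\in T}x_e$. This vanishes on every exceptional route, because a transversal meets each exceptional route exactly once. The $\bg$-map is the projection along the span of the exceptional routes, so $\psi_T$ descends to $\bg$-space with $\psi_T(\bg(R))=1-|R\cap T|\le 1$. Now take two switches $v_i,v_{i+2}$ of the same sign $\epsilon$, and write $\epsilon(e_{v_i}+e_{v_{i+2}})=\bg(R_1)+\bg(R_3)$ with single-switch routes spliced from exceptional routes. You would then have to check that $E_{\{R_1,R_3\}}$ contains no exceptional route. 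Given that, some transversal $T$ avoids both routes and $\psi_T=2$. This check is the entire content of the step, and it is not in your proposal.

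\textbf{Second step.} A smaller version of the same problem appears here. You assert, without argument, that an inner vertex with three inner edges yields a route with three switches under an arbitrary ample framing. The argument is short:
\begin{enumerate}
\item Suppose $v$ has two inner in-edges and one inner out-edge (the other case is dual). Ampleness gives the two in-edges distinct labels, so one of them has label opposite to the out-edge.
\item Extend this two-edge path to an inner route, meaning a maximal directed path in the inner graph.
\item By maximality and acyclicity, its first vertex has both in-edges from the source, carrying labels $1$ and $2$. Likewise its last vertex has both out-edges to the sink, carrying both labels.
\item Choose these source and sink edges to create switches at both ends. This gives at least three switches for any ample framing.
\end{enumerate}
This is how the paper's two supporting lemmas proceed. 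The rest of your proposal is correct and essentially the paper's argument: the converse via single-switch routes, and the reverse implication via the lab framing.
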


\begin{proof}
By Lemma~\ref{lem:<3switch-iff-indeg<3} below, $G$ admits a locally anti-blocking framing $F$ if and only if each inner route in $G$ has constant edge labels induced by $F$.
By Lemma~\ref{lem:lab-iff-<3switch} below, each inner route in $G$ has constant edge labels induced by $F$ if and only if its inner graph is a path or a cycle. 
\end{proof}

We next proceed to prove the lemmas invoked in the proof of Theorem~\ref{thm:lab-dag-characterization}.

\begin{lemma}\label{lem:daggpolytopeintegerpoints}
    Let $(G,F)$ be an amply framed DAG and let $P$ be the $\bg$-polytope of $(G,F)$. 
    The only integer points in $P$ are the origin and the vertices of $P$.
\end{lemma}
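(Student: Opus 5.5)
We would prove this by pulling the question back to the flow polytope and using the structure supplied by Theorem~\ref{thm:brunsromer} and the DKK triangulation. Let $\pi:\R^E\to\R^n$ be the linear projection sending the indicator vector of a route $R$ to $\bg(R)$; as recalled after Definition~\ref{def:gvector}, $\pi$ is the Bruns--R\"omer projection along the span of the special simplex $\Delta$ formed by the exceptional routes. The plan has two steps. First we show that every nonzero lattice point of $P$ lies on the boundary of $P$. Second we show that every boundary lattice point is a vertex.

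For the first step we use reflexivity. By Definition~\ref{def:reflexive} and the fact that $\bg(G,F)$ is reflexive with interior point $\bzero$ (recorded before Remark~\ref{rem:gentleg}), we can write $P=\{x: Ax\le \mathbb{1}\}$ with $A$ integral. A lattice point $x\neq \bzero$ that is interior would satisfy $Ax<\mathbb{1}$, so $Ax\le \bzero$ because $A$ and $x$ are integral. Since $P$ is bounded, the recession cone $\{x: Ax\le \bzero\}$ is $\{\bzero\}$, which gives a contradiction. Hence $\bzero$ is the unique interior lattice point, and every other lattice point lies on some facet of $P$.

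For the second step we use the unimodular DKK triangulation of Proposition~\ref{prop:dkkgpolytope}. Its simplices are $\bg(M)$ for maximal cliques $M$, and since each such $M$ contains all exceptional routes, each contains the vertex $\bzero$. Thus every maximal simplex is a cone from $\bzero$ over a boundary simplex spanned by $\bg$-vectors of non-exceptional routes. Let $x$ be a lattice point on a facet $\mathcal{F}$ of $P$. Then $x$ lies in some boundary simplex $\sigma$ of this triangulation contained in $\mathcal{F}$. The simplex $\conv(\bzero,\sigma)$ is unimodular, so its only lattice points are its vertices. Therefore $x=\bg(R)$ for some non-exceptional route $R$.

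It then remains to show that $\bg(R)$ is a vertex of $P$ for every non-exceptional route $R$. This is the step we expect to be the main obstacle; the paper states it as Corollary~\ref{cor:vertices}, proved later, and we would argue it independently as follows. The indicator vector $\mathbb{1}_R$ is a vertex of $\calF_1(G)$. Suppose $\bg(R)$ were not a vertex of $P$. Then $\bg(R)$ would be a convex combination of $\bg$-vectors of other routes, and lifting through $\pi$ would express $\mathbb{1}_R$ as a combination of other route vectors modulo the span of $\Delta$. To rule this out, we pick a facet of $P$ containing $\bg(R)$. By Proposition~\ref{prop:dagfacesgpolytope}, such a facet corresponds to a transversal $T$ of the exceptional routes avoided by $R$, and the routes on this facet are exactly those avoiding $T$. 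We then construct a linear functional on $\R^E$ that is maximized on the routes avoiding $T$ and, among those, uniquely by $R$; it should be built from indicator functions of the edges of $R$ and of $T$. Because this functional can be arranged to be constant on the span of $\Delta$, it descends through $\pi$ to $P$, and this exhibits $\bg(R)$ as a vertex.

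Alternatively, one could argue directly from the $\pm1$ structure of $\bg$-vectors. Here we would show that $\bg(R)$ is uniquely maximized over all route $\bg$-vectors by the functional $\langle \bg(R),\cdot\rangle$ adjusted appropriately, and we would use fullness of $G$ to prevent two distinct routes from having the same $\bg$-vector.
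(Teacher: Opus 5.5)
Your argument is correct, but it is organized differently from the paper's.

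\textbf{The paper's route.} The paper works through the Bruns--R\"omer correspondence. The boundary of $P$ has the same integer-point structure as the equatorial complex of $\calF_1(G)$. The lattice points of that complex are route vectors, hence vertices, since $\calF_1(G)$ is a $0/1$-polytope. Reflexivity disposes of the interior.

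\textbf{Your route.} You instead use the unimodular DKK triangulation of Proposition~\ref{prop:dkkgpolytope}. Every lattice point of $P$ lies in some unimodular simplex $\bg(M)$, so it is one of that simplex's vertices, i.e.\ $\bzero$ or some $\bg(R)$. This applies verbatim to interior lattice points as well. So your first step (reflexivity) is unnecessary, as is the claim that the maximal simplices are cones over boundary simplices.

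\textbf{What your route does not give.} It does not give vertex-ness, because a vertex of a triangulation need not be a vertex of $P$ (the origin is an example). The paper gets vertex-ness from the face-preserving nature of the Bruns--R\"omer isomorphism; you must supply it separately. Citing Corollary~\ref{cor:vertices} is legitimate and not circular, since it rests only on the cited Proposition~\ref{prop:faces-edge-sets}, not on this lemma.

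\textbf{Completing your independent sketch.} Your sketch works once the weights are made explicit. Choose $e_X\in X\setminus R$ for each exceptional route $X$, and set
\[
\ell(x)=\sum_{e\in R}x_e-\sum_{X}\bigl(|X\cap R|+1\bigr)\,x_{e_X}.
\]
Because exceptional routes are pairwise edge-disjoint, $\ell(X)=-1$ for every exceptional $X$. Also $\ell(R)=|R|$. For every other route $S$ we have $\ell(S)<|R|$: either $S$ avoids all the $e_X$ and $|S\cap R|<|R|$, or $S$ incurs a penalty of at least $1$. A uniform large weight on the transversal would not work, because it is not constant on the exceptional routes. Since $\ell$ is constant on $\Delta$, it descends to an affine functional on $P$ uniquely maximized at $\bg(R)$. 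This also shows that $\bg$ is injective on non-exceptional routes, so no separate fullness argument is needed.

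\textbf{Your last alternative fails as stated.} The functional $\langle \bg(R),\cdot\rangle$ does not isolate $\bg(R)$. In the caracol graph $\Path(n-1)$ with $n\ge 3$, take the two routes $0\to 1\to 2\to n+1$ that enter vertex $1$ by the horizontal edge and by the arc, respectively. Their $\bg$-vectors are $\mathbf{e}_2$ and $-\mathbf{e}_1+\mathbf{e}_2$, which have the same inner product with $\mathbf{e}_2$.
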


\begin{proof}
    Observe that $P$ is the image of the flow polytope for $(G,F)$ under the map projecting out the linear span of the exceptional routes.
    Bruns and R\"omer~\cite{BrunsRomer} proved that the integer-point structure of the boundary of $P$ is isomorphic to that of the equatorial complex of the flow polytope (see~\cite{BrunsRomer} for details).
    As every lattice point in the equatorial complex of the flow polytope is a route, it follows that every lattice point in the boundary of $P$ is the image of a route, i.e., a $\bg$-vector.
    Since $P$ is reflexive, the only lattice points in $P$ are the origin and boundary points.
    Since the flow polytope for $G$ is a $0/1$-polytope, every route is a vertex of the flow polytope, hence every integer point in the equatorial complex is a vertex, and hence every non-origin $\bg$-vector in $P$ is a vertex.
\end{proof}

\begin{lemma}\label{lem:lab-iff-<3switch} 
Let $(G,F)$ be an amply framed full DAG without idle edges having $\bg$-polytope $P$.
The following are equivalent.
    \begin{enumerate}[label=(\roman*)]
        \item $P$ is locally anti-blocking.
        \item Each route in $(G,F)$ has edge labels that switch at most twice.
        \item Each inner route in $G$ is a leg.
    \end{enumerate}
\end{lemma}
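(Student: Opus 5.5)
The plan is to prove (i)$\Leftrightarrow$(ii) geometrically, using the sign pattern of $\bg$-vectors, and (ii)$\Leftrightarrow$(iii) combinatorially, by extending inner paths to routes. The basic observation is the following. Order the vertices of a route $R$ along $R$. By Definition~\ref{def:gvector}, the nonzero coordinates of $\bg(R)$ are exactly the inner vertices where the label of $R$ switches. Because the labels take only the two values $1,2$, consecutive switches along $R$ are of opposite type, so consecutive nonzero coordinates of $\bg(R)$ alternate in sign.

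\emph{(iii)$\Rightarrow$(ii).} Let $R$ be a route. It consists of a source edge, then a (possibly empty) directed path in the inner graph, then a sink edge. The inner portion is contained in some maximal directed path of the inner graph, i.e.\ in an inner route. By (iii) that inner route is a leg, so all inner edges of $R$ carry the same label. Hence switches can occur only at the first and last inner vertices of $R$, which gives at most two switches.

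\emph{(ii)$\Rightarrow$(iii).} Suppose some inner route $Q$, running from $u$ to $w$, is not a leg. Then it has a switch at some vertex $v$ strictly between $u$ and $w$. Since $Q$ is maximal and the inner graph is acyclic, $u$ has no inner in-edge. As $G$ is full, both in-edges of $u$ therefore come from the source, and they carry the labels $1$ and $2$. Choose the one whose label differs from that of the first edge of $Q$. Similarly, at $w$ choose the sink edge whose label differs from that of the last edge of $Q$. The resulting route switches at $u$, $v$ and $w$, contradicting (ii).

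\emph{(ii)$\Rightarrow$(i).} Under (ii), the alternation observation shows that every vertex $\bg(R)$ has at most two nonzero coordinates, and if there are two they have opposite signs. Hence the projections $\pi_I(\bg(R))$ are among $\bg(R)$ itself, $\mathbf 0$, $e_v$ and $-e_v$ for inner vertices $v$. Since $P$ is reflexive, $\mathbf 0\in P$, so it remains to show $\pm e_v\in P$. To realize $-e_v$, take the in-edge of $v$ labelled $1$ and the out-edge of $v$ labelled $2$. By ampleness (each non-idle edge lies on an exceptional route), the first edge lies on a constant-$1$ route and the second on a constant-$2$ route. Concatenate the prefix of the first route ending at $v$ with the suffix of the second route starting at $v$. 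This is a route whose only switch is at $v$, so its $\bg$-vector is $-e_v$. The vector $e_v$ is realized in the same way with the labels swapped. Proposition~\ref{prop:lab-iff-projections} then gives (i).

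\emph{(i)$\Rightarrow$(ii).} This is the step needing the most care. Suppose a route $R$ has at least three switches, and let $u_1,u_2,u_3$ be three consecutive ones along $R$, with signs $s,-s,s$. Let $x=\pi_{\{u_2\}}(\bg(R))$. If (i) holds, then $x\in P$ by Proposition~\ref{prop:lab-iff-projections}. Since $x$ is a nonzero lattice point, Lemma~\ref{lem:daggpolytopeintegerpoints} gives $x=\bg(S)$ for some route $S$. The support of $\bg(S)$ equals the support of $x$, so $S$ passes through $u_1$ and $u_3$. Acyclicity forces $S$ to visit them in the same order as $R$ does. Moreover, $S$ has no switch strictly between $u_1$ and $u_3$, since $x$ has no nonzero coordinate there. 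Thus $u_1$ and $u_3$ are consecutive switches of $S$ with equal signs $s,s$, contradicting alternation. I expect the main point to verify is this identification of $x$ with a $\bg$-vector, which rests entirely on Lemma~\ref{lem:daggpolytopeintegerpoints}.
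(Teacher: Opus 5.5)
Your proposal is correct and follows essentially the same route as the paper. For (i)$\Leftrightarrow$(ii) it uses the alternating sign pattern of $\bg$-vectors together with Lemma~\ref{lem:daggpolytopeintegerpoints} and Proposition~\ref{prop:lab-iff-projections}, and for (ii)$\Leftrightarrow$(iii) it attaches switching source/sink edges to a non-constant inner route. The differences are minor: you realize $\pm e_v$ by splicing pieces of exceptional routes where the paper follows a fixed label back to the source, and you use acyclicity to make explicit that switches occur in the same order along any route through them, which the paper's ``does not alternate'' leaves implicit.
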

\begin{proof}
We first show that (i) implies (ii) by a contrapositive argument.
Suppose $G$ has a route $R$ that switches edge labels three or more times. 
By Definition~\ref{def:gvector} it follows that the nonzero coordinates of the $\bg$-vector of a route alternate in sign, hence $\bg(R)$ has at least three nonzero alternating coordinates. 
Towards showing $P$ is not locally anti-blocking, let $i$ be some position of $\bg(R)$ that is not the first nor the last nonzero coordinate and consider the projected vector $\pi_i(\bg(R))$, defined by Proposition \ref{prop:lab-iff-projections}. 
By Lemma \ref{lem:daggpolytopeintegerpoints}, the only integer points of $P$ are the origin or vertices. 
Since the projected vector $\pi_i(\bg(R))$ is not the origin, as it has a nonzero entry, and does not alternate, it cannot be the $\bg$-vector of a route, hence this vector is not in $P$.
Thus, Proposition \ref{prop:lab-iff-projections} implies that the polytope $P$ is not locally anti-blocking.

We next show that (ii) implies (i).
Suppose each route in $(G,F)$ switches edge labels at most twice.
Then each vertex in $P$ has at most two nonzero coordinates.
Let $v$ be such a vertex, with nonzero coordinates indexed by $i$ and $j$ with $i<j$.
We can find a route whose $\bg$-vector is $\pi_i(v)$ by simply following the edge label incoming to vertex $j$ back from $j$ to the source.
Similarly, if we follow the label of the outgoing edge from $i$ until we reach the sink, we have a route whose $\bg$-vector is $\pi_j(v)$.
Thus, along with the origin, we have $\pi_I(v)\in P$ for each $I\in 2^{\{i,j\}}$.
This extends to each $I\in 2^{[n]}$ since the other coordinates are already 0.
Therefore, $P$ is locally anti-blocking by Proposition \ref{prop:lab-iff-projections}.

Lastly, we show that (ii) and (iii) are equivalent.
Suppose $G$ has an inner route that switches labels at least once. 
Since the initial vertex of this route must have two edges incident to the source of $G$, and the final vertex has two edges incident to the sink of $G$, we are free to choose the edge in each case that makes the route switch labels.
This results in a route that switches labels at least three times.
In fact, if every inner route is a leg, the only places a route can switch labels are at the source-adjacent or sink-adjacent vertex, resulting in a maximum of two label switches.
\end{proof}

\begin{example}
Figure~\ref{fig:framing_edge_labels} shows a DAG with edge labels induced by a locally anti-blocking framing.
\end{example}

\begin{lemma}\label{lem:<3switch-iff-indeg<3}
Suppose $G$ is a full DAG with no idle edges and having a connected inner graph.
There is an ample framing of $G$ in which each inner route is a leg if and only if $G$ is equal to $\Path(\bk)$ or $\Cycle(\bk)$.
\end{lemma}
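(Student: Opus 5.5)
The plan is to reduce the statement to a purely local condition on the inner graph and then read off the global shape. Every inner vertex $v$ of a full DAG has exactly two incoming and two outgoing edges. Write $\mathrm{in}_I(v)$ and $\mathrm{out}_I(v)$ for the numbers of inner edges into and out of $v$; each lies in $\{0,1,2\}$. Recall from Section~\ref{sec:flowpolytopes} that an ample framing is the same as a labeling of the edges by $\{1,2\}$ such that the two incoming edges at each inner vertex get distinct labels, and likewise the two outgoing edges. Requiring every inner route to be a leg means that whenever an inner edge enters $v$ and another inner edge leaves $v$, the two carry the same label.

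For the forward direction, suppose such a framing exists. First I will show that every inner vertex has total inner degree $\mathrm{in}_I(v)+\mathrm{out}_I(v)\le 2$. Suppose instead $v$ has at least three inner edges; then, say, $\mathrm{in}_I(v)=2$ and $\mathrm{out}_I(v)\ge 1$, or the reverse. In the first case the two inner incoming edges carry labels $1$ and $2$, and an inner outgoing edge $e$ continues a maximal inner path through each of them. Since every inner route is a leg, the route through the label-$1$ incoming edge forces $e$ to have label $1$, and the route through the label-$2$ incoming edge forces label $2$, a contradiction. The reverse case is symmetric.

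Hence the inner graph, which is connected, has maximum (undirected) degree at most $2$, so it is a path or a cycle (possibly a single vertex, which is $\Path(0)$ or is handled as a degenerate case). Decomposing the underlying path or cycle into its maximal directed segments, i.e.\ the inner routes, identifies $G$ with $\Path(\bk)$ or $\Cycle(\bk)$. Here $\bk$ is the sequence of segment lengths, and the source and sink edges are determined by fullness.

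For the converse, I will use the lab framing of Section~\ref{sec:pathcycledags} and check that it is ample. Consecutive inner routes meet at a vertex that is either a common start, where the two inner edges are both outgoing, or a common end, where both are incoming. In either case the two inner edges receive opposite labels, since odd-indexed routes are labeled $2$ and even-indexed ones $1$, and that is exactly what the constraint at the vertex requires. The remaining source or sink edges at each vertex are then forced to take the complementary labels. At an interior vertex of a route, the one inner in-edge and one inner out-edge share a label, and the source and sink edges take the other label.

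The one subtle point is the cycle case: the alternation must close up around the cycle. This works because in $\Cycle(\bk)$ the number $r$ of inner routes is even, as noted in Definition~\ref{def:dag-labeling}, so routes $r$ and $1$ receive opposite labels. With this, every inner route is a leg by construction. Finally, by the characterization in \cite[Corollary 3.11]{Polytopes2} recalled in Section~\ref{sec:flowpolytopes}, this labeling is an ample framing. I expect the degree argument in the forward direction to be the main step, though it is short; the parity check for cycles is the only other potential obstacle.
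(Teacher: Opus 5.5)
Your proof is correct and takes essentially the same approach as the paper. The forward direction uses the same local argument (two incoming inner edges with distinct labels plus an outgoing inner edge force a label switch along some inner route), and the converse alternates labels on consecutive inner routes and completes them with source and sink edges. Your version is slightly more careful in two places the paper leaves loose: bounding inner degree by counting inner edges rather than inner neighbors correctly handles parallel inner edges (as in $\Cycle(1,1)$), and you make explicit that the alternation closes up around a cycle because $r$ is even.
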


\begin{proof}
First note that since the inner graph of $G$ is connected then it is a path or cycle if and only if each inner vertex is adjacent to at most two other inner vertices of $G$.

Suppose $G$ has an ample framing in which each of its inner routes is a leg, and suppose for contradiction there is an inner vertex $v$ that is adjacent to at least three other inner vertices.
Considering three of the inner edges at $v$, suppose that two are incoming and one is outgoing (which must be the case since $G$ is full) as the other case is symmetric.
Hence, we are free to choose the incoming edge that has the opposite label to the outgoing edge.
Since the three vertices in this two-edge sequence are all inner, this can be extended to an inner route that switches labels, which is not possible.

Now suppose the inner vertices of $G$ induce an undirected path or cycle.
Frame the edges of the inner graph so that each inner route is a leg and so that if two inner routes intersect at a vertex, then those routes receive different labels.
This guarantees that there are distinct labels for two edges incoming to a common vertex, and for two edges outgoing from a common vertex.
Each source and sink edge $e$ then is of one of two types.
If $e$ is the only edge connecting its endpoints, then the framing label on $e$ is uniquely determined in order for the framing to be ample.
If $e$ is one of a pair of parallel edges connecting its endpoints, then the pair of edges can be assigned labels $1$ and $2$, resuling in an ample framing.
\end{proof}

Our previous results yield the following characterization of the ample framings that yield a locally anti-blocking $\bg$-polytope.

\begin{corollary}\label{cor.2framings}
If $G$ is $\Path(\bk)$ or $\Cycle(\bk)$, then an ample framing of $G$ is locally anti-blocking if and only if each inner route of $G$ is a leg.
\end{corollary}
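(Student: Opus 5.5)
This corollary follows by combining the two lemmas just proved, read at the level of a fixed framing rather than a fixed graph. Let $G$ be $\Path(\bk)$ or $\Cycle(\bk)$ and let $F$ be an ample framing of $G$. By definition, $F$ is a locally anti-blocking framing exactly when the $\bg$-polytope $P=\bg(G,F)$ is locally anti-blocking. The argument is to check that $G$ meets the hypotheses of Lemma~\ref{lem:lab-iff-<3switch} and then apply the equivalence (i)$\Leftrightarrow$(iii) of that lemma to $(G,F)$.

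First I verify the hypotheses. By Definition~\ref{def:dag-labeling}, $\Path(\bk)$ and $\Cycle(\bk)$ are full DAGs, and $F$ is ample by assumption. The remaining point is that $G$ has no idle edges. Every inner vertex of a full DAG has in-degree and out-degree $2$, so no inner vertex has a unique incoming or outgoing edge. Hence no edge is idle. Lemma~\ref{lem:lab-iff-<3switch} therefore applies to $(G,F)$.

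Applying it, statement (i) says $P$ is locally anti-blocking and statement (iii) says each inner route of $G$ is a leg of $(G,F)$. Their equivalence is exactly the claim. The argument is pointwise in the framing $F$: it does not choose a particular framing, so it covers every ample framing and not only the lab framings. No use of Lemma~\ref{lem:<3switch-iff-indeg<3} is needed, except to note that the corollary is not vacuous, since that lemma guarantees such framings exist on $\Path(\bk)$ and $\Cycle(\bk)$.

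I do not expect a real obstacle. The only thing to be careful about is confirming that the no-idle-edge hypothesis, stated separately in Lemma~\ref{lem:lab-iff-<3switch}, is automatic for full DAGs, and that the lemma's proof of (ii)$\Leftrightarrow$(iii) used only fullness, which holds here.
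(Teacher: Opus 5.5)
Your proposal is correct and matches the paper's implicit derivation, which the paper states without proof ("Our previous results yield\ldots"). The corollary is exactly the equivalence (i)$\Leftrightarrow$(iii) of Lemma~\ref{lem:lab-iff-<3switch} applied to the given ample framing, and your observation that full DAGs have no idle edges is the right check that the lemma's hypotheses hold.
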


\section{DKK triangulations of \texorpdfstring{$\bg$}{g}-polytopes for locally anti-blocking amply framed DAGs are pulling triangulations}\label{sec:pulling}

For the remainder of this article, we may now assume without loss of generality that all locally anti-blocking DAGs are of the form $\Path(\bk)$ or $\Cycle(\bk)$, each having a lab framing where the first inner route of length $k_1$ has edges labeled by $2$, as in Figure~\ref{fig:lab-dags-path-horizontal}.

Let $P$ be the $\bg$-polytope of a locally anti-blocking amply framed full DAG $(G,F)$ whose inner graph has at least one edge.
By Theorem~\ref{thm:labcompressed} and the reflexivity of $P$, the intersection of every orthant with $P$ is compressed.
Since the DKK triangulation of $P$ (as described in Proposition~\ref{prop:dkkgpolytope}) is unimodular and, as we will see, refines the subdivision of $P$ induced by pulling $P$ at the origin, it is natural to ask if the DKK triangulation of $P$ can be obtained as a pulling triangulation.
We give an affirmative answer to this question in Theorem~\ref{thm:dkkpulling}.
More generally, we provide an algorithm for producing an irredundant pulling order, and using this algorithm determine all possible pulling orders for constructing the DKK triangulation of $P$.

\subsection{Minimal faces}\label{sec:minimalfaces}

In light of the considerations in Section~\ref{sec:pulling-triangulations}, if we want to realize the DKK triangulation of the $\bg$-polytope via a pulling order on its vertices, then we need to ensure that whenever we pull a route, by which we mean pull the $\bg$-vector of the route, it is coherent with every route that it is cell-neighboring in the subdivision.
This means that the first route that we pull has to be coherent with all other routes.
In other words, we must start by pulling $\bzero$, which is the $\bg$-vector of the exceptional routes of $(G,F)$.
This subdivides the $\bg$-polytope into the cells $\conv(F,\bzero)$ for each facet $F$ of $P$.

Following this initial pull at the origin, Theorem \ref{thm:pulling1}  characterizes when pulling a point separates two vertices and emphasizes that we must investigate what the minimal face containing any two routes in conflict looks like.
The following characterization of the faces of the $\bg$-polytope, which is a restatement of a special case of results due to Braun and Cornejo~\cite[Theorem 4.6]{BraunCornejo} and Berggren~\cite{Berggren}, allows us to do just that.

\begin{definition}
    Let $Y$ be a subset of the routes of given DAG. We define 
    \[
    E_Y :=\cup_{R\in Y}\{e\in R\} \, .
    \]
\end{definition}

\begin{proposition}[Braun and Cornejo~\cite{BraunCornejo}, Berggren~\cite{Berggren}]\label{prop:faces-edge-sets}
    The faces of the $\bg$-polytope are in bijection with edge sets of the form $E_Y$ that do not contain all edges of any exceptional route, where $Y$ is a subset of the routes of $G$.
    Specifically, given such an $E_Y$, the convex hull of the indicator vectors of all routes contained in $E_Y$ forms a face $Q_Y$ of the flow polytope. 
    Further, the map $\phi$ is an integral equivalence between $Q_Y$ and the face $\phi(Q_Y)$ of the $\bg$-polytope corresponding to $E_Y$.
\end{proposition}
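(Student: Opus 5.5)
The plan is to deduce the statement from the face structure of $\calF_1(G)$ combined with the Bruns--R\"omer description (Theorem~\ref{thm:brunsromer}) of how faces of a Gorenstein polytope pass to its reflexive projection. Throughout, $\phi$ denotes the $\bg$-vector map, viewed as the projection along the span of the special simplex $\Delta$ whose vertices are the exceptional routes.

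\textbf{Step 1: faces of the flow polytope.} Since $\calF_1(G)$ is the $0/1$-polytope cut out by the flow equations and the inequalities $x_e\ge 0$, every face has the form $\{x\in\calF_1(G): x_e=0 \text{ for } e\notin E\}$. Its vertices are exactly the routes contained in $E$. Replacing $E$ by the union of those routes does not change the face, so we may take $E=E_Y$. Conversely, distinct sets of the form $E_Y$ give distinct faces, since $E_Y$ is recovered as the union of the vertices of the face. Hence faces of $\calF_1(G)$ are in bijection with edge sets $E_Y$, via $E_Y\mapsto Q_Y$. Moreover, $E_Y$ contains all edges of an exceptional route $R$ if and only if $R\in Q_Y$. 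So the condition in the statement is exactly that $Q_Y$ contains no vertex of $\Delta$.

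\textbf{Step 2: passing to the $\bg$-polytope.} I would work at the level of cones. Lift to $\mathrm{cone}(\calF_1(G))$ and quotient by the linear span $L$ of the lifted exceptional routes. Since $\Delta$ is special, every facet of $\calF_1(G)$ contains all but exactly one vertex of $\Delta$: the Gorenstein point $c=\sum(1,y_i)$ has height one on each facet, so exactly one $y_i$ has height one there and the rest have height zero. Theorem~\ref{thm:brunsromer}, equivalently the complete-fan statement of \cite[Proposition 5]{DKK12}, then gives that the faces of the cone containing no ray of $\Delta$ map bijectively onto the cones of a complete fan. This fan is the face fan of $P$, and each such face $Q$ maps onto $\conv(\bzero,\phi(Q))$ with $\phi(Q)$ a proper face of $P$. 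Through this correspondence, the faces $Q_Y$ from Step~1 that avoid all exceptional routes correspond bijectively to the proper faces $\phi(Q_Y)$ of $P$. Lemma~\ref{lem:daggpolytopeintegerpoints} confirms that the lattice points match: every boundary lattice point of $P$ is the image of a route.

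\textbf{Step 3: integral equivalence.} It remains to show that $\phi|_{Q_Y}$ is a lattice isomorphism onto its image. First, $\phi|_{Q_Y}$ is injective on the affine span of $Q_Y$. Indeed, $Q_Y$ lies in a facet of $\calF_1(G)$ on which some affine height function $h$ is identically $0$, while every vertex of $\Delta$ except one, say $y_j$, also has height $0$. Hence a nonzero element of $\mathrm{aff}(Q_Y)-\mathrm{aff}(Q_Y)$ lying in $L$ would force $Q_Y$ to meet $\mathrm{aff}(\Delta)$ within $\{h=0\}$, and this should contradict that $Q_Y$ avoids all vertices of $\Delta$. Second, unimodularity: the DKK triangulation restricted to $Q_Y$ consists of unimodular simplices. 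Each such simplex joined with the appropriate face of $\Delta$ is a unimodular cell of $\calF_1(G)$, so its image under $\phi$ is unimodular in $\mathbb{Z}^n$. Consequently $\phi$ carries $\mathrm{aff}(Q_Y)\cap\mathbb{Z}^E$ onto the lattice of $\mathrm{aff}(\phi(Q_Y))$, which gives the integral equivalence.

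I expect Step 3 to be the main obstacle. The injectivity in particular is where the special-simplex property has to be used carefully rather than just cited, and if the direct height-function argument is awkward I would fall back on the combinatorial description of \cite{BraunCornejo,Berggren}.
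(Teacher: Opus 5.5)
\textbf{Step 2 has a genuine gap.} The paper does not prove this proposition; it quotes it from Braun--Cornejo and Berggren. So I judged your argument on its own. Your Step~1 is correct. Step~2, however, only asserts that the faces of the cone avoiding the rays of $\Delta$ map bijectively onto the face fan of $P$. Your citation of Theorem~\ref{thm:brunsromer} ``equivalently'' \cite[Proposition~5]{DKK12} does not support this.

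\begin{itemize}
\item The complete fan in DKK is the image of the clique fan, which is a \emph{triangulation} of the face fan.
\item Knowing that clique cones tile the quotient does not show that the clique cones inside a given $E_Y$ form the cone over a face of $P$. Nor does it show that nothing outside $Q_Y$ maps into that face.
\item Theorem~\ref{thm:brunsromer} as stated supplies only the special simplex and the join decomposition.
\end{itemize}

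That face correspondence is the heart of the proposition. What is missing is an explicit supporting functional. The key input is that the exceptional routes partition the edge set: each edge $e$ lies on exactly one of them, call it $R(e)$. Take $E_Y$ containing no exceptional route. Set $m_R=|R\setminus E_Y|\ge 1$ and $\psi(x)=\sum_{e\notin E_Y}x_e/m_{R(e)}-\lambda(x)$, where $\lambda$ is the flow strength.

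\begin{itemize}
\item For every exceptional route $R$ we get $\psi(\chi_R)=0$. So $\psi$ vanishes on $\mathrm{lin}(\Delta-\Delta)$, is constant on the fibers of $\phi$, and descends to $P$.
\item On $\calF_1(G)$ we have $\psi\ge -1$, with equality exactly on $Q_Y$ by your Step~1. Hence $\phi(Q_Y)$ is a face of $P$ whose full preimage in $\calF_1(G)$ is $Q_Y$. This gives injectivity of $E_Y\mapsto\phi(Q_Y)$.
\item Conversely, the preimage in $\calF_1(G)$ of a proper face of $P$ is a face of $\calF_1(G)$. It contains no exceptional route, because $\bzero$ lies in the interior of $P$.
\end{itemize}

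For a single transversal, $\psi$ is exactly the facet functional behind Proposition~\ref{prop:dagfacesgpolytope}.

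\textbf{Step 3's injectivity argument fails as written.} There are three problems.

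\begin{itemize}
\item One facet containing $Q_Y$ is too weak. It contains all but one vertex of $\Delta$, so it cannot exclude directions such as $y_1-y_2$.
\item The inference that a common direction with $L$ ``forces $Q_Y$ to meet $\mathrm{aff}(\Delta)$'' is false. Sharing a direction does not make two affine spaces intersect.
\item Even meeting $\mathrm{aff}(\Delta)$ would not contradict $Q_Y$ avoiding the vertices of $\Delta$.
\end{itemize}

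You need the whole transversal at once. Every vector in $\mathrm{lin}(Q_Y-Q_Y)$ is supported on $E_Y$. A combination $\sum_i c_i\chi_{R_i}$ of exceptional routes, which have pairwise disjoint supports, is supported on $E_Y$ only if $c_i=0$ whenever $R_i\not\subseteq E_Y$, that is, for every $i$. Hence $\mathrm{lin}(Q_Y-Q_Y)\cap L=0$. Falling back on Braun--Cornejo or Berggren here would be circular, since that is the statement being proved.

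The unimodularity half of Step~3 is essentially fine. Make explicit that the $\bg$-vectors of the non-exceptional routes of a maximal clique form a basis of $\mathbb{Z}^n$. So a clique inside $E_Y$ maps to a simplex that is unimodular in its own affine lattice. A single maximal simplex of the triangulation of $Q_Y$ then generates the lattice of $\mathrm{aff}(Q_Y)$.
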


It is an immediate corollary that every non-exceptional route is a face, hence a vertex.

\begin{corollary}
    \label{cor:vertices}
    Let $P$ be the $\bg$-polytope of an amply framed full DAG $(G,F)$.
    The $\bg$-vector for every non-exceptional route in $G$ is a vertex of $P$.
\end{corollary}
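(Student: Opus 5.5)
The plan is to apply Proposition~\ref{prop:faces-edge-sets} with $Y=\{R\}$ for a single non-exceptional route $R$. Then $E_Y$ is simply the edge set of $R$.

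First, I check that $E_{\{R\}}$ contains no exceptional route entirely. In an amply framed DAG, a route is exceptional if and only if its edge labels are constant. Suppose the edges of $R$ contained all edges of some exceptional route $R'$. Since both are directed paths from the source to the sink, and $R'$ uses only edges of $R$, the edges of $R'$ would form a source-to-sink path inside the path $R$. Such a path must be $R$ itself, so $R=R'$, contradicting that $R$ is non-exceptional.

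Hence $E_{\{R\}}$ determines a face $Q_{\{R\}}$ of the flow polytope, namely the convex hull of the indicator vectors of routes contained in $E(R)$. By the argument just given, the only such route is $R$, so $Q_{\{R\}}$ is the single point given by the indicator vector of $R$.

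By Proposition~\ref{prop:faces-edge-sets}, $\phi$ maps $Q_{\{R\}}$ integrally equivalently onto a face of the $\bg$-polytope. This face is the single point $\bg(R)$, which is therefore a $0$-dimensional face, i.e., a vertex of $P$.

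The only mildly delicate step is the claim that a route contained in the edge set of another route must equal it. This holds because $G$ is acyclic: starting from the source, each vertex of $R$ has exactly one outgoing edge within $E(R)$, so any route inside $E(R)$ is forced to follow $R$. I do not expect further obstacles.
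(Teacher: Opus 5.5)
Your proposal is correct and matches the paper's argument, which applies Proposition~\ref{prop:faces-edge-sets} with $Y=\{R\}$ to conclude that $\bg(R)$ is a face and hence a vertex. You additionally supply the details the paper leaves implicit: $E_{\{R\}}$ contains no exceptional route, and $R$ is the only route inside its own edge set.
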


Note that in Proposition~\ref{prop:faces-edge-sets}, the bijection is not with the set of routes $Y$ but rather with the edges in the union.
Thus, different sets of routes $Y$ could index the same set of edges $E_Y$ corresponding to a face.
Proposition~\ref{prop:faces-edge-sets} allows us to work with indicator vectors of routes in the flow polytope when considering faces of the $\bg$-polytope, as the following corollary demonstrates.

\begin{corollary}\label{cor:components-and-minimal-face}
    Let $P$ be the $\bg$-polytope of an amply framed full DAG $(G,F)$. Suppose $R$ and $S$ are two routes in $G$ where $E_{\{R,S\}}$ does not contain an exceptional route, and let $\kappa$ be the number of connected components of $R\cap S$, where a component might consist of only a single vertex. 
    Then the minimal face of $P$ containing both $R$ and $S$ is integrally equivalent to a $(\kappa-1)$-cube.
\end{corollary}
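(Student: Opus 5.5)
By Proposition~\ref{prop:faces-edge-sets}, I would replace the minimal face of $P$ containing $\bg(R)$ and $\bg(S)$ with the face $Q_{\{R,S\}}$ of the flow polytope $\calF_1(G)$. The argument has three steps.

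\textbf{Step 1: identify the minimal face.} Faces of $P$ correspond to edge sets $E_Y$ containing no exceptional route, and a face contains $R$ and $S$ exactly when $E_Y \supseteq E_{\{R,S\}}$. Larger edge sets give larger faces. So the minimal such face is the one indexed by $E_{\{R,S\}}$ itself, which is admissible by hypothesis. Under the integral equivalence $\phi$ it corresponds to
\[
Q_{\{R,S\}} = \conv\{\mathbb{1}_T : T \text{ a route with } T \subseteq R\cup S\}.
\]
It therefore suffices to show that $Q_{\{R,S\}}$ is integrally equivalent to a $(\kappa-1)$-cube.

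\textbf{Step 2: describe the subgraph $H = R\cup S$.} Both routes start at the source and end at the sink, so the source and sink lie in $R\cap S$. Write the components of $R\cap S$, in the order they occur along $R$, as $C_1,\dots,C_\kappa$, with $C_1$ containing the source and $C_\kappa$ the sink.

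I claim $S$ meets them in the same order. If not, there would be a directed path from a later component back to an earlier one, and together with $R$ this would give a directed cycle, contradicting acyclicity.

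Hence $H$ is a concatenation: the common segment $C_1$, then two internally disjoint parallel segments, one from $R$ and one from $S$, then $C_2$, and so on. There are exactly $\kappa-1$ places where $R$ and $S$ diverge, giving $\kappa-1$ "diamonds".

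\textbf{Step 3: the polytope of $H$ is a cube.} A route in $H$ is determined by choosing the $R$-branch or the $S$-branch independently in each of the $\kappa-1$ diamonds. So there are $2^{\kappa-1}$ routes, and $\calF_1(H)$ is a product of $\kappa-1$ unit segments. In flow coordinates, let $t_j$ be the flow on the first edge of the $R$-branch in diamond $j$. A flow on $H$ is determined affinely and integrally by $(t_1,\dots,t_{\kappa-1})$:
\begin{itemize}
\item every edge in a common segment carries $1$;
\item every edge on the $R$-branch of diamond $j$ carries $t_j$;
\item every edge on the $S$-branch of diamond $j$ carries $1-t_j$.
\end{itemize}
This is a lattice-preserving affine isomorphism between $Q_{\{R,S\}} = \calF_1(H)$ and $[0,1]^{\kappa-1}$, whose inverse is a coordinate projection. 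Composing with $\phi$ from Proposition~\ref{prop:faces-edge-sets} gives the corollary.

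I expect the main obstacle to be the ordering argument in Step 2. It is the only place acyclicity is used, and care is needed when a component is a single vertex, or when the two routes use distinct parallel edges between the same pair of vertices; in that case the "diamond" consists of two single edges, which the argument still covers. The rest is bookkeeping.
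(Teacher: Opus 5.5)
Your proposal is correct and follows essentially the same route as the paper. You reduce via Proposition~\ref{prop:faces-edge-sets} to the face of $\calF_1(G)$ indexed by $E_{\{R,S\}}$, split $R\cup S$ into the common components and $\kappa-1$ pairs of parallel segments, and project onto one edge of each $R$-segment to get a unit cube in $\R^{\kappa-1}$; your acyclicity argument that the components occur in the same order along $S$ as along $R$ fills in a point the paper leaves implicit.
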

\begin{proof}
    The edge union $E_{\{R,S\}}$ is the unique minimal edge set by containment that contains the routes $R$ and $S$. Hence, by Proposition~\ref{prop:faces-edge-sets}, the minimal face of $P$ containing $R$ and $S$ is equivalent to the convex hull of the indicator vectors of all the routes contained in $E_{\{R,S\}}$. 
    Suppose $R\cap S$ has components $C_1,C_2,\ldots,C_\kappa$ (where the source is in $C_1$ and the sink in $C_{\kappa}$) with segments $R_i$ and $S_i$ connecting $C_i$ to $C_{i+1}$ along $R$ and $S$, respectively.
    Fix an edge $h_i\in R_i$ for each $i$.
    Then a route in $E_{\{R,S\}}$ is uniquely determined by assigning a value of $0$ or $1$ to each $h_i$, and completing the indicator vector for a route from this labeling.
    Further, any point in the face defined by $E_{\{R,S\}}$ is uniquely determined by assigning each edge $h_i$ a value $\lambda_i\in [0,1]$.
    Thus, the face defined by $E_{\{R,S\}}$ injectively projects onto the subspace defined by the coordinates indexed by the $h_i$'s, and the projection forms a unit cube in $\R^{\kappa-1}$.
\end{proof}

In the following lemma, we consider faces of the $\bg$-polytope in the locally anti-blocking case.

\begin{lemma}\label{lem:number-components-of-route-intersection}
    Let $G$ be $\Path(\bk)$ or $\Cycle(\bk)$ for some sequence $\bk = (k_1, \dots, k_r)$ of positive integers. Suppose $R$ and $S$ are distinct routes in $G$ and let $\kappa$ be the number of connected components of $R\cap S$. Then:
    \begin{enumerate}[label=(\roman*)]
        \item We have $2\leq \kappa\leq 4$.
        \item We have $\kappa=4$ if and only if $G = \Cycle(k_1, k_2)$ and $E_{\{R,S\}} = E_{\{L_1, L_2\}}$, where $L_1$ and $L_2$ are the two long exceptional routes. 
        \item If $E_{\{R,S\}}$ does not contain an exceptional route, then $\kappa\leq 3$. 
        Further:
        \begin{enumerate}
        \item If $R$ and $S$ are in conflict, then $\kappa=3$.
        \item If $R$ and $S$ are coherent, then $\kappa=2$.
        \end{enumerate}
    \end{enumerate}
\end{lemma}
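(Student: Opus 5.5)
The plan is to exploit the very rigid shape of routes in $G=\Path(\bk)$ or $\Cycle(\bk)$ with a lab framing. By Lemma~\ref{lem:lab-iff-<3switch}, every inner route is a leg. Hence every route $R$ consists of a source edge into some inner vertex $a$, a directed subpath $[a,b]$ of a single leg (possibly $a=b$), and a sink edge out of $b$. I will write $R=(s_R,[a_R,b_R],t_R)$. Two distinct routes always share the source and the sink, so $\kappa\geq 2$. Their remaining intersection is determined by the intersection of their inner segments together with possible coincidences of source or sink edges.

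For the upper bound in (i), I split into two cases.

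If $R$ and $S$ lie on the same leg, their inner segments are intervals of one directed path. Their intersection is then a single (possibly empty) interval, and it merges with the source or sink component whenever the entering or exiting edges coincide. This gives $\kappa\leq 3$.

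If $R$ and $S$ lie on different legs, the two legs meet in at most one vertex when the inner graph is a path or a cycle with $r\geq 3$ legs. The only exception is $\Cycle(k_1,k_2)$, where the two legs share both endpoints. So in general the inner intersection has at most one component, giving $\kappa\leq 3$. In $\Cycle(k_1,k_2)$ the two legs meet at their common initial vertex and their common final vertex. Since each leg is directed, these shared vertices are the source-adjacent and sink-adjacent ends. A fourth component therefore requires both $R$ and $S$ to contain both shared vertices, each as an interior point of the route, not merged with the source or sink component. Each route would then have to traverse its whole leg from the shared start to the shared end, so $R$ and $S$ are the long exceptional routes $L_1$ and $L_2$. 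Up to the choice of the parallel source/sink edges, this gives $E_{\{R,S\}}=E_{\{L_1,L_2\}}$. Checking the converse is direct, and that proves (ii).

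For (iii), first suppose $E_{\{R,S\}}$ contains no exceptional route. By (ii), $\kappa\neq 4$, so $\kappa\leq 3$. I then classify by the switch pattern. A route switches labels only at $a_R$ (when $s_R$ has the label opposite to its leg) and at $b_R$. Coherence of $R$ and $S$ can fail only at a common subroute $[u,v]$ that is an interior component, meaning neither the source nor the sink component. So if $\kappa=2$, the routes are coherent, since the only components are the source and sink components, at which coherence holds vacuously. If $\kappa=3$, let $C=[u,v]$ be the middle component. At $u$ the routes enter by different edges and at $v$ they leave by different edges, with both orders read from the labels. Here I must show that the hypothesis that $E_{\{R,S\}}$ contains no exceptional route forces the two routes to be in conflict at $C$. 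Suppose instead they were coherent there. Then one route enters $u$ on the $1$-edge and leaves $v$ on the $1$-edge, and the other does likewise with $2$. In the leg structure, the constant-$1$ path through $C$ then assembles from edges of $R\cup S$ into an exceptional route, contradicting the hypothesis. This gives (a) and (b) together, since $\kappa\in\{2,3\}$ and the two cases are exhaustive.

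The main obstacle is this last step. I must verify carefully, across the cases of same leg, adjacent legs, and routes with $a=b$, that the pieces of $R$ and $S$ before $u$ and after $v$ with matching labels really do concatenate into a constant-label route. This uses the fact that exceptional routes are exactly the label-constant routes, and that the source and sink edges at leg endpoints carry the labels forced by ampleness.
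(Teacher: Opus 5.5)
Your arguments for (i), (ii) and (iii)(a) are sound and follow the paper's lines: each route's inner part lies in a single leg; two legs share at most one vertex except in $\Cycle(k_1,k_2)$; and a conflict needs an interior common component. One small correction in (ii): $R$ and $S$ need not be $L_1$ and $L_2$ themselves. Keeping the two shared vertices as separate components forces the routes to use opposite edges of both parallel pairs and opposite legs, and that is exactly $E_{\{R,S\}}=E_{\{L_1,L_2\}}$.

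The genuine gap is the step you flagged, and no argument can fill it. Under the hypothesis that $E_{\{R,S\}}$ contains no exceptional route, $\kappa=3$ does not force a conflict, so (iii)(b) is false as stated. Take the caracol $\Path(6)$ of Example~\ref{eg.caracol6} (horizontal edges labelled $2$, arcs labelled $1$), with $R=(2,5)$ and $S=(3,4)$.
\begin{itemize}
\item $R\cap S$ is the source, the edge $3\to 4$, and the sink, so $\kappa=3$.
\item At vertex $3$, $S$ enters on an arc (label $1$) and $R$ on $2\to 3$ (label $2$). At vertex $4$, $S$ leaves on an arc (label $1$) and $R$ on $4\to 5$ (label $2$). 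So $R$ and $S$ are coherent.
\item $E_{\{R,S\}}$ contains no exceptional route. A short one $(i,i)$ needs the source arc and the sink arc at the same vertex, but here the source arcs are at $2,3$ and the sink arcs at $4,5$. The long one $(1^-,7^+)$ needs the horizontal edges at $1$ and $7$.
\end{itemize}
Your assembly breaks because matching labels at $u$ and $v$ say nothing about the label of $C$ itself or about the far ends of the routes. Here $C$ carries label $2$, so no constant-$1$ path runs through it. The constant-$2$ path through $C$ needs $1^-$, $1\to 2$, $5\to 6$, $6\to 7$ and $7^+$, none of which lies in $R\cup S$.

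This failure is systematic. Let $T=P_1CQ_1$ and $U=P_2CQ_2$ be any conflicting pair as in (iii)(a). Swapping tails gives $P_1CQ_2$ and $P_2CQ_1$, which are coherent, have the same three components, and have the same edge union. They form the coherent diagonal of the square face, which is the $\NE(R)\cup\SW(R)$ case of Lemma~\ref{lem:thecoherencelemma}(ii)(b); in the example, $T=(2,4)$ and $U=(3,5)$. So (iii)(b) fails whenever (iii)(a) is non-vacuous.

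The paper's proof of (iii)(b) is only a bare assertion that coherent routes overlap only in the source and sink components, so it has the same hole, and the error carries into Corollary~\ref{cor:incoherence-square}(ii)(b). What your method does establish is (iii)(a) plus the converse direction: under the hypothesis, $\kappa=2$ implies coherence, while coherent pairs can have $\kappa\in\{2,3\}$.
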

\begin{proof}
    Since $R$ and $S$ are distinct and intersect at the source and sink, it follows that $\kappa\geq 2$. It is a direct consequence of Lemma~\ref{lem:lab-iff-<3switch}~\emph{(ii)} that $\kappa\leq 4$. 
    However, $\Cycle(k_1,k_2)$ is the only locally anti-blocking framed DAG in which there is a pair of inner vertices with two different paths between them, namely the vertices 1 and $k_1$ having exactly two paths between them. 
    Therefore, the only way $R\cap S$ can have four components is when the components are the single vertices $s$, $1$, $k_1$, and $t$. 
    Between each of these components there are exactly two paths, and if $R\cap S = \{s,1,k_1,t\}$, then $R$ and $S$ take different paths between each vertex. Hence, $E_{\{R,S\}}$ is equal to the union of all six of these paths, equivalently stated in \emph{(ii)}.
    In this case $E_{\{R,S\}}$ contains an exceptional route, thus when $E_{\{R,S\}}$ does not contain an exceptional route, we have $\kappa\leq 3$. 
    If $R$ and $S$ are in conflict, then $\kappa$ must be equal to $3$, since two routes cannot conflict on the source- or sink-containing component.
    If the two routes are coherent, then they overlap on only the two components containing the source and sink, and thus $\kappa=2$.
\end{proof}

The following corollary shows that for locally anti-blocking framed full DAGs, the minimal face of the $\bg$-polytope containing a pair of conflicting routes is either the $\bg$-polytope or a quadrilateral.

\begin{corollary}\label{cor:incoherence-square}
    Let $(G,F)$ be a locally anti-blocking framed full DAG, let $P$ be its $\bg$-polytope, and suppose $R$ and $S$ are routes.
    Let $f$ be the minimal face of the $\bg$-polytope containing their $\bg$-vectors. 
    Then:
    \begin{enumerate}
    [label=(\roman*)]
        \item If $E_{\{R,S\}}$ contains the edge set of an exceptional route, then $f$ is the entire polytope $P$ (the routes lie on different facets).
        \item Otherwise:
        \begin{enumerate}
        \item If $R$ and $S$ are in conflict, then $E_{\{R,S\}}$ contains four distinct routes and $f$ is the quadrilateral face that is the convex hull of their $\bg$-vectors.
        \item If $R$ and $S$ are coherent, then $E_{\{R,S\}}$ contains two distinct routes and $f$ is the edge formed by $R$ and $S$.
        \end{enumerate}
    \end{enumerate}
\end{corollary}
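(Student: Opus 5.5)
We would derive this corollary by combining Proposition~\ref{prop:faces-edge-sets}, Corollary~\ref{cor:components-and-minimal-face}, and Lemma~\ref{lem:number-components-of-route-intersection}, treating the two cases in the statement separately.

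\emph{Case (i).} Suppose $E_{\{R,S\}}$ contains the edge set of an exceptional route. Any face of $P$ containing $\bg(R)$ and $\bg(S)$ corresponds, via Proposition~\ref{prop:faces-edge-sets}, to an edge set $E_Y$ containing both routes, so $E_Y \supseteq E_{\{R,S\}}$. Then $E_Y$ contains an exceptional route, so it corresponds to no proper face. Equivalently, by Proposition~\ref{prop:dagfacesgpolytope}, no transversal of the exceptional routes is avoided by both $R$ and $S$, so they lie on no common facet. Hence $f=P$.

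\emph{Case (ii).} Suppose $E_{\{R,S\}}$ contains no exceptional route. By Corollary~\ref{cor:components-and-minimal-face}, $f$ is integrally equivalent, via $\phi$, to the face $Q_{\{R,S\}}$ of the flow polytope, which is a $(\kappa-1)$-cube. Its vertices are the routes contained in $E_{\{R,S\}}$: one route for each choice, independently for each $i$, of the segment $R_i$ or $S_i$ between consecutive components $C_i$ and $C_{i+1}$. Lemma~\ref{lem:number-components-of-route-intersection}(iii) now settles the count.
\begin{enumerate}
\item[(a)] If $R$ and $S$ conflict, then $\kappa=3$. So $E_{\{R,S\}}$ contains exactly $2^{2}=4$ routes, namely $R$, $S$, and the two swaps, and $f$ is a square. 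Since $\phi$ is an integral equivalence on this face, the four $\bg$-vectors are distinct and are the vertices of the quadrilateral $f$.
\item[(b)] If $R$ and $S$ are coherent, then $\kappa=2$. So $E_{\{R,S\}}$ contains exactly $R$ and $S$, and $f$ is the segment joining their $\bg$-vectors.
\end{enumerate}

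\emph{Main obstacle.} The one step needing care is the claim that the routes contained in $E_{\{R,S\}}$ are exactly the $2^{\kappa-1}$ segment choices. This means ruling out a route that leaves $E_{\{R,S\}}$ or wanders between segments. It follows because $R_i$ and $S_i$ meet only at their endpoints in $C_i$ and $C_{i+1}$, and $G$ is acyclic, which is the content of the proof of Corollary~\ref{cor:components-and-minimal-face}. The distinctness of the $\bg$-vectors then follows from $\phi$ being injective on $Q_{\{R,S\}}$.
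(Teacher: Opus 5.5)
Your proposal is correct and follows the paper's own proof. Case (i) is the paper's appeal to Proposition~\ref{prop:faces-edge-sets}: no edge set containing $E_{\{R,S\}}$ avoids an exceptional route, so no proper face contains both routes. Case (ii) combines Lemma~\ref{lem:number-components-of-route-intersection}(iii) with Corollary~\ref{cor:components-and-minimal-face}, just as the paper does; your remark that the routes in $E_{\{R,S\}}$ are exactly the $2^{\kappa-1}$ segment choices only makes explicit what that corollary's proof already uses.
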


\begin{proof}
    Suppose that $E_{\{R,S\}}$ contains an exceptional route. Then there is no edge set containing the edges of $R$ and $S$ that does not contain the edges of an exceptional route. By Proposition~\ref{prop:faces-edge-sets}, there is no proper face of $P$ containing $R$ and $S$, implying that the minimal face of $P$ containing them is $P$ itself.
    Now we may assume that $E_{\{R,S\}}$ does not contain an exceptional route. The proof is completed by applying Lemma \ref{lem:number-components-of-route-intersection} followed by Corollary \ref{cor:components-and-minimal-face} to arrive at \textit{(ii)}.
\end{proof}

\subsection{Route Pairs}\label{sec:routepairs} 

In order to use our preceding results to identify pulling orders yielding the DKK triangulation, we will need a combinatorial model that encodes when routes in a locally anti-blocking framed DAG are coherent.
We begin by introducing a notational convention.
Since the inner graph of a locally anti-blocking framed DAG $G$, as described in Section~\ref{sec:labdags}, forms a path or cycle, each route of $G$ is completely determined by its source-incident edge, sink-incident edge, and the label of the inner edges (if any) used by the route.

\begin{definition}\label{def:routepairlabels}
Let $G$ be a locally anti-blocking framed DAG.
For a short exceptional route $S$ passing through the inner vertex $i$, the \defn{route pair defining $S$} is the ordered pair $(i,i)$.
Let any other route $R$ have source edge $a$ and sink edge $b$.
If $R$ is on a $2$-leg, the \defn{route pair defining $R$} is the ordered pair $(a,b)$. 
If $R$ is on a $1$-leg, the \defn{route pair defining $R$} is the ordered pair $(b,a)$.
\end{definition}

\begin{example}\label{ex:routepairlabels}
    Consider the DAG shown in Figure~\ref{fig:legs}, using the lab framing.
    The route following the vertex sequence $[0,2,3,11]$ is labeled by the pair $(2,3)$.
    The route following the vertex sequence $[0,3,11]$ is labeled by the pair $(3,3)$.
    The route following the vertex sequence $[0,7,5,11]$ is labeled by the pair $(5,7)$.
    The route following the vertex sequence $[0,8,7,6,5,4,11]$ using the edges $8^-$ and $4^+$ is labeled by the pair $(4^+,8^-)$.
\end{example}

Observe that when a DAG is presented with the circular layout, then the route pair lists the source/sink edges in the order they appear when traversing the inner DAG in a counterclockwise manner, as shown in the next example.

\begin{example}
    \label{ex:routepaircircular}
    Consider the right DAG in Figure~\ref{fig:lab-dags-path-circular}, which is identical to the right DAG in Figure~\ref{fig:lab-dags-path-horizontal}.
    The route from the source to vertices $7$ then $6$ then to the sink is denoted by the pair $(6,7)$.
    This agrees with the order of vertices $6$ and $7$ in the counter-clockwise order in the circular layout.
    On the other hand, the route from the source to vertex $2$ to vertex $4$ to the sink is denoted by the pair $(2,4^+)$.
    This again has the order of the underlying vertices for the edge labels in the pair appear in counter-clockwise order.
\end{example}

For most locally anti-blocking framed DAGs, it is sufficient to name only the unordered pair of source- and sink-edge to uniquely identify a route.
However, the convention above allows us to differentiate between the routes in $\Cycle(k_1,k_2)$ that have the same source and sink edges.
For instance, in $\Cycle(k_1,k_2)$, the pair $(1^+, k_1^-)$ refers to the route lying on a $2$-leg with these source and sink edges while $(k_1^-, 1^+)$ refers to the route lying on a $1$-leg with the same source and sink edges.

\begin{example}
\label{ex:smallcyclelabels}
Consider the framed DAG given by $\Cycle(3,2)$ in Figure~\ref{fig:cycletwolabels}.
We will use the source and sink edge labeling conventions given in Subsection~\ref{sec:pathcycledags}, though only two of these edge labels ($1^+$ and $4^-$) are listed in the figure.
Note that the unordered pair of edge labels $\{2,3\}$ uniquely determines the route that goes from the source to vertex $2$, followed by vertex $3$, followed by the sink.
However, the unordered pair of edge labels $\{1^+,4^-\}$ is ambiguous in this case, as it is not clear whether the route should enter along edge $1^+$ and traverse the upper or lower path to exit at $5^-$.
Thus, our route pair labeling convention allows us to designate $(1^+,4^-)$ as the route that traverses the upper inner route with framing label $2$, while $(4^-,1^+)$ is the route that traverses the lower inner route with framing label $1$.
\end{example}

\begin{figure}
    \centering
\begin{tikzpicture}
\begin{scope}[every node/.style={circle,fill,draw,above}]
    \node[label=right:\textcircled{$1$}] (s) at (2,0) {};
    \node[label=right:\textcircled{$2$}] (u1) at (3/2,2.1) {};
    \node[label=above:\textcircled{$3$}] (u2) at (-3/2, 2.1) {};
    \node[label=right:\textcircled{$4$}] (t) at (-2,0) {};    
    \node[label=left:\textcircled{$5$}] (l2) at (0,-2) {};
\end{scope}
    
\begin{scope}
    \node[label=left:$1^+$] (ss) at (1.3,.9) {};
    \node[label=left:$4^-$] (tt) at (-2.5,.9) {};
\end{scope}

\begin{scope}[every edge/.style={draw,very thick}]
    \foreach \u/\v in {s/u1, u1/u2, u2/t}
        \path [-{Latex[width=3mm]}] (\u) edge node {\large 2} (\v);
    \foreach \u/\v in {s/l2, l2/t}
        \path [-{Latex[width=3mm]}] (\u) edge node {\large 1} (\v);
\end{scope}

\begin{scope}[every edge/.style={draw,very thick},
              opacity=.3]
    \path [{Latex[width=3mm]}-] (s) edge node[style=above] {\large 1} ++(-1,.7);
    \path [{Latex[width=3mm]}-] (s) edge node[style=above left] {\large 2} ++(-1,-.5);
    \path [-{Latex[width=3mm]}] (u1) edge node[style=above left] {\large 1} ++(.7,.7);
    \path [{Latex[width=3mm]}-] (u1) edge node[style=left] {\large 1} ++(-.7,-.7);
    \path [{Latex[width=3mm]}-] (u2) edge node[style=below left] {\large 1} ++(.7,-.7);
    \path [-{Latex[width=3mm]}]  (u2) edge node[style=below left] {\large 1} ++(-.7,.7);
    \path [-{Latex[width=3mm]}]  (t) edge node[style=below right] {\large 2} ++(-1,-.6);
    \path [-{Latex[width=3mm]}]  (t) edge node[style=above right] {\large 1} ++(-1,.6);
    \path [-{Latex[width=3mm]}] (l2) edge node[style=below right] {\large 2} ++(0,-1);
    \path [{Latex[width=3mm]}-]  (l2) edge node[style=left] {\large 2} ++(0,1); 
\end{scope}
\end{tikzpicture}

    \caption{The cycle $\Cycle(3,2)$ with a lab framing.}
    \label{fig:cycletwolabels}
\end{figure}

Lemma \ref{lem:number-components-of-route-intersection} makes it clear that when the inner graph is a cycle consisting of two legs, as in Figure~\ref{fig:cycletwolabels}, we must make special considerations for the eight routes contained in the edge union of the two long exceptional routes. 
We will occasionally have to make an exception for this case in the statements of our results, hence the following definition.

\begin{definition}
    In a locally anti-blocking DAG with inner DAG $\Cycle(k_1,k_2)$, the eight routes contained in the edge union of the two long exceptional routes are called \defn{anomalous routes}.
\end{definition}

\begin{example}\label{ex:anomalousroutes}
    In Figure~\ref{fig:cycletwolabels}, the eight routes given by the ordered pairs in
    \[
    \{1^+,1^-\}\times \{4^+,4^-\}\bigcup \{4^+,4^-\}\times \{1^+,1^-\}
    \]
    are the anomalous routes.
\end{example}

The order of the names given to the source/sink edges plays a role in the combinatorial descriptions to come. 
Thus we must impose an order on the superscripted integers amid the usual total order on the unmarked integer labels, which we do as follows, where the inner vertices have labels ranging from $1$ to $n$:    
\begin{equation}\label{eqn:order}
1^- < 1 < 1^+ < \cdots< (i-1)^+ < i^- < i < i^+ < (i+1)^- < \cdots < n^+ \quad .
\end{equation}
Note that in a typical locally anti-blocking DAG, only a subset of these edge labels will be used.
What is important is to impose upon them this relative order.

When $G=\Cycle(\bk)$, we consider this to be a cyclic order.
In most cases, we will only be comparing the order of indices that are on the same leg or adjacent legs, and hence we can restrict to the linear order on that sequence of legs. 
However, in the case where the inner graph is a cycle with only two inner legs it becomes necessary to take into account the cyclic nature of the order.
This will be addressed further as it arises.

Note that, by the structure of a locally anti-blocking DAG and our previous labeling conventions of vertices and edges, for almost any route $(a,b)$ we have $a\leq b$ when we restrict the order to the source/sink edges that are incident to the leg on which the route lies.
The exception to this is routes involving the edge labels $1^-$ and $1^+$ that reside on the final leg of a cycle, where we have the labels with a $1$ occurring in the second position.

\subsection{Coherence Diagrams}\label{sec:coherencediagrams}

We can represent the set of routes of $G$ using a diagram that, as we will see, encodes the coherence relation for pairs of routes.

\begin{definition}\label{defn:coherence_diagram}
Let $G$ be a locally anti-blocking DAG with inner DAG of the form $\Path(\bk)$ or $\Cycle(\bk)$.
The \defn{coherence diagram} of $G$ is an array of boxes defined as follows, whose rows are indexed top-to-bottom by the labels of the source-adjacent edges of $G$ using the linear order~\eqref{eqn:order} and whose columns are indexed left-to-right by the labels of the sink-adjacent edges using the linear order~\eqref{eqn:order}.
Let $G$ be a locally anti-blocking framed DAG with inner DAG given by $\Path(\bk)$.
For each route $R$ in $G$, if $R$ is on a $2$-leg, then we include the box in row $a$ and column $b$ where $(a,b)$ is the route pair for $R$.
If $R$ is on a $1$-leg, then we include the box in row $b$ and column $a$ where $(a,b)$ is the route pair for $R$.

When $G$ is a locally anti-blocking framed DAG with inner DAG given by $\Cycle(\bk)$, the coherence diagram is constructed similarly, however we include two additional rows at the bottom of the diagram, where these rows are indexed by the labels of the two source-incident edges at vertex $1$ (repeating the order of the labeling of the top two rows).
For a route supported on a $1$-leg with source edge labeled by $1^+$ or $1^-$, we place the box corresponding to this route in the bottom two rows.
We then identify the left-most boundary edges in the top two rows with the right-most boundary edges in the bottom two rows, concatenating the top two rows with the bottom two rows.
\end{definition}

The following three examples of coherence diagrams provide clarification regarding the correspondence between row/column indices for boxes and the entries of the route pairs.
One way to visualize the coherence diagram, as seen in Figures~\ref{fig:path342diagram},~\ref{fig:cycle3423diagram}, and~\ref{fig:cycle32diagram} and formalized in Lemma~\ref{lem:outercorners}, is that each row/column of the diagram contains precisely one box corresponding to an exceptional route.
This follows from the facts that we have an ample framing and that every inner vertex has in- and out-degree $2$, and hence every edge lies on a unique exceptional route~\cite{Polytopes2}.
The set of exceptional routes with coordinates $(i,i)$ form lower-left and upper-right staircases in the coherence diagram.
If a box is to the right or above a lower-left staircase, then it corresponds to a route pair on a route lying on a $2$-leg.
If a box is to the left or below an upper-right staircase, then it corresponds to a route pair on a route lying on a $1$-leg.
The long exceptional routes form isolated corners of the diagram on the ``opposite'' side from each staircase.

\begin{example}
Figure~\ref{fig:path342diagram} is the coherence diagram for $\Path(3,4,2)$.  
Observe that there is a row and a column indexed by $6$, as $6$ is an inner vertex connected to both the source and the sink.
However, there is not a column labeled $8^+$, since vertex $8$ is not connected to the sink.
Further, the box in row $1$ and column $3$ corresponds to the unique route that passes through the source edge labeled $1$ and the sink edge labeled $3$, which is given by the route pair $(1,3)$.
The box in row $7$ and column $5$ corresponds to the unique route that passes through the source edge labeled $7$ and the sink edge labeled $5$, which is given by the route pair $(5,7)$.
\end{example}

\begin{example}
Figure~\ref{fig:cycle3423diagram} is the coherence diagram for $\Cycle(3,4,2,3)$.
We have indicated the identification of the upper-left pair of vertical edges with the lower-right pair of vertical edges by highlighting them in magenta.
Thus, for example, in the center figure we consider the box in row $1^-$ and column $2$, which is given by the route pair $(1^-,2)$, to be adjacent along an edge to the box in row $1^-$ and column $12$, which is given by the route pair $(12,1^-)$.
\end{example}

\begin{example}
Figure~\ref{fig:cycle32diagram} is the coherence diagram for the DAG given in Figure~\ref{fig:cycletwolabels}, i.e., for $\Cycle(3,2)$.
Observe that in this coherence diagram, there are two boxes corresponding to row $1^-$ and column $4$.
The upper box with these coordinates is given by the route pair $(1^-,4)$, as that route sits on a $2$-leg, while the lower box with these coordinates is given by the route pair $(4,1^-)$, as that route sits on a $1$-leg.
This use of repeated coordinates for boxes in the coherence diagram of $\Cycle(k_1,k_2)$ is the primary motivation for our ordered pair conventions for routes.
In Figure~\ref{fig:cycle32diagram}, we have indicated the boxes corresponding to anomalous routes with diamonds, which are the eight anomalous routes identified in Example~\ref{ex:anomalousroutes}.
\end{example}

\begin{figure}
    \centering
    \includegraphics{figures/path342diagram.tex}
    \caption{The coherence diagram corresponding to $\Path(3,4,2)$.
    Boxes with a dot inside indicate exceptional routes.
    }
    \label{fig:path342diagram}
\end{figure}

\begin{figure}
    \centering
    \includegraphics{figures/cycle3423diagram.tex}
    \caption{The coherence diagrams corresponding to $\Cycle(3,4,2,3)$.
    Boxes with a dot inside indicate exceptional routes.
    }
    \label{fig:cycle3423diagram}
\end{figure}

\begin{figure}
    \centering
    \includegraphics{figures/cycle32diagram.tex}
    \caption{The coherence diagram corresponding to $\Cycle(3,2)$.
    Boxes with a dot inside indicate exceptional routes.
    Boxes containing a diamond indicate anomalous routes.
    }
    \label{fig:cycle32diagram}
\end{figure}

\begin{remark}
The idea of the coherence diagram previously appeared in~\cite{BGMY23} as the collection of lattice points above a lattice path for a family of DAGs called $\nu$-caracol graphs.
The case of the classical caracol $\Path(n)$ appearing in this article corresponds to the case $\nu=(1,\ldots,1)$, and any collection of coherent boxes in the coherence diagram of $\Path(\bk)$ corresponds to a collection of tree-compatible points in the setting of~\cite{BGMY23}.
Thus, a maximal collection of coherent boxes in the coherence diagram of $\Path(\bk)$ is a grid representation of a binary tree, see~\cite{CPS19}.
\end{remark}

\begin{remark}\label{remk:ar-quiver}
    The work~\cite{Polytopes2} associates to an amply framed DAG $(G,F)$ a gentle algebra $\Lambda(G,F)$ whose $\tau$-tilting theory describes the DKK triangulation of $(G,F)$.
    When $G$ is a locally anti-blocking DAG with the lab framing $F$, the coherence diagram for $G$ can be thought of as an extended Auslander-Reiten quiver of the corresponding gentle Nakayama algebra $\Lambda(G)$. Here boxes without a dot, i.e., those corresponding to non-exceptional routes, are in bijection with indecomposable $\Lambda(G,F)$-modules and shifted projective $\Lambda(G,F)$-modules.   The irreducible morphisms, or arrows of the Auslander-Reiten quiver, move directly right and directly down between adjacent boxes of the coherence diagram.   Moreover, applying the Auslander-Reiten translation $\tau$ corresponds to moving up-left along a diagonal in the coherence diagram.
\end{remark}

We next introduce several definitions that will be needed throughout this work.

\begin{definition}\label{def:outercorner}
We say a box in the coherence diagram is an \defn{outer corner} if it is the top right or bottom left box for some maximal (under containment) rectangle in the diagram, including rectangles that cross the identified boundary line in the cycle case.    
\end{definition}

\begin{definition}\label{def:boundingstrip}
Given a locally anti-blocking DAG with inner DAG either $\Path(\bk)$ or $\Cycle(\bk)$, let $I_s$ denote the set of edges adjacent to the source $s$ that are part of a parallel pair of edges between $s$ and an inner vertex.
Define $I_t$ similarly for edges adjacent to the sink $t$ that are part of a parallel pair of edges.
We define the \defn{bounding strip}, denoted by $\lambda_{s,t}$, to be the subdiagram of the coherence diagram for $G$ given by the union of the rows indexed by $I_s$ and the columns indexed by $I_t$.
\end{definition}

If $e, e'\in I_s$ are adjacent to a common vertex, then the rows indexed by $e$ and $e'$ form a $2\times m$ rectangle in this subdiagram, where $m$ is the number of routes in $G$ that contain $e$.
Similarly, such a pair of edges in $I_t$ forms an $m\times 2$ rectangle.
Note that in the case of a cycle, the rows corresponding to $1^-$ and $1^+$ are split between the top and bottom rows of the diagram.

For example, in Figure~\ref{fig:path342diagram}, the columns indexed by $4^-$ and $4^+$ form a $9\times 2$ rectangle and the rows indexed by $8^-$ and $8^+$ form a $2\times 8$ rectangle.
In the center diagram corresponding to $\Cycle(3,4,2,3)$, the rows for $1^-$ and $1^+$ are split into two $2\times 4$ rectangles, one occurring at the top of the diagram and one at the bottom of the diagram; these correspond to routes supported on the two legs of length $3$ that contain vertex $1$.

As the following lemma shows, the coherence diagram is a skew Ferrers diagram (using the French convention).

\begin{lemma}\label{lem:outercorners}
The coherence diagram for $G$ is a skew Ferrers diagram with outer corners in bijection with the exceptional routes of $G$.
\end{lemma}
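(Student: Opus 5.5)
The plan is to describe the diagram leg by leg and then check the skew-shape condition one row at a time. The first step is to record, for each row index (a source edge $a$), which column indices $b$ give a box. Each route is determined by its source edge, its sink edge, and the leg on which it lies.

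Consider a source edge $a$ incident to an inner vertex $i$ and a route on a $2$-leg $[p,q]$, traversed in increasing vertex order. The route enters at $i$, follows the leg, and exits at some vertex $j\ge i$ in the same leg. So the boxes in row $a$ coming from $2$-legs are exactly the columns of the sink edges at vertices $i,\dots,q$. Under the order~\eqref{eqn:order}, these form an interval of consecutive column labels beginning at the label at $i$ (or $i^\pm$) and ending at the last sink label on that leg, namely $q^+$ or $q$.

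Routes on $1$-legs are recorded with the roles of source and sink swapped. A symmetric analysis shows that the boxes they contribute to a given row also form an interval. This interval ends where the $2$-leg interval begins, so the two pieces together make one contiguous interval in each row.

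The second step is to show that the left and right endpoints of these row intervals are weakly monotone in the French convention. Going down the rows, the left and right endpoints should both move weakly to the right. The left endpoint of a row is the start of the $1$-leg part, which is controlled by the previous leg boundary. The right endpoint is the end of the $2$-leg part, which is controlled by the next leg boundary. Both boundaries are nondecreasing in $i$. This makes the diagram a skew shape: the region between two lattice paths, with rows and columns convex. For the cycle, the same argument applies after cutting the cyclic order at vertex $1$. The two extra bottom rows, together with the identification of the magenta boundary edges, are exactly what is needed to keep the monotonicity valid across the cut.

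The third step is to identify the outer corners, using the description given before the lemma. Every row and every column contains exactly one exceptional route, because each source or sink edge lies on a unique exceptional route~\cite{Polytopes2}.

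Short exceptional routes $(i,i)$ occur at interior vertices of legs, where the row interval switches from its $1$-leg part to its $2$-leg part. I will check that each such box is simultaneously the leftmost box of its column and the bottommost box of its row segment, which makes it a bottom-left corner of a maximal rectangle on a staircase. The upper-right staircase case is symmetric.

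Long exceptional routes run along an entire leg from an endpoint pair $i^\pm$ to $q^\pm$. Their boxes sit at the extreme top-right or bottom-left of the $2\times m$ and $m\times 2$ blocks of the bounding strip $\lambda_{s,t}$, and I will check that these are also outer corners.

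For the converse, I will note that a skew Ferrers diagram has its outer corners exactly at the convex corners of its two boundary paths. I will then show that each such convex corner occurs at a leg boundary or at a vertex with a source and sink edge, and that the box there is the exceptional route through that vertex or leg. Because each row and column contains a unique exceptional box, this correspondence is a bijection.

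I expect the main obstacle to be the cycle case. The first hurdle is verifying the skew property across the identified boundary. The second is the two-leg cycle $\Cycle(k_1,k_2)$, where boxes repeat coordinates and the anomalous routes appear; there, rectangles that wrap around must be handled by direct inspection of the eight anomalous boxes. I would first treat $\Path(\bk)$ completely, then reduce general cycles to paths by cutting at vertex $1$, and finally handle $\Cycle(k_1,k_2)$ as an explicit special case.
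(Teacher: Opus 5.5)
Your plan is sound, but it is organized differently from the paper's proof. The paper does not analyze rows. It writes the diagram as the union of the bounding strip $\lambda_{s,t}$ with staircase diagrams on the inner side of each turn of the strip, namely the routes on a single leg that use no edge of $I_s\cup I_t$. It reads the long exceptional routes off the corners of $\lambda_{s,t}$ and the short ones off the outer corners of the staircases, and asserts that the maximal-rectangle condition follows.

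You instead verify the skew shape directly: each row is an interval of columns, both endpoints weakly increase going down, and cycles are cut at vertex $1$. You then use the general fact that the outer corners of a skew shape are exactly the convex corners of its boundary paths. These are the boxes that are last in their row and highest in their column, or first in their row and lowest in their column.

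The paper's decomposition is shorter, and its strip/staircase dichotomy is reused in the proof of the Coherence Lemma. Your version actually proves the skew property and the corner characterization, which the paper only asserts, and it treats all corners uniformly. The cycle case closes as you expect. Only the rows $1^\pm$ are glued, and the maximal rectangle crossing the seam has its corners at the two long exceptional routes through vertex $1$. For $\Cycle(k_1,k_2)$, the boxes with repeated coordinates lie in different copies of the rows $1^\pm$, so no separate inspection of anomalous routes is needed for this lemma.

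Some descriptive claims in your outline are false as stated and need repair, although none breaks the method.

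\emph{Step 2.} Most rows do not have both a $1$-leg part and a $2$-leg part.
A row at an interior vertex $i$ of a $2$-leg begins at column $i$ and has no $1$-leg part. A row at an interior vertex $i$ of a $1$-leg ends at column $i$ and has no $2$-leg part. Only rows of vertices with two outgoing inner edges (e.g.\ $8^\pm$ in $\Path(3,4,2)$) have both parts, and those parts abut because such a vertex has no sink edge. So ``left endpoint $=$ previous leg boundary'' is wrong in general, and the monotonicity argument needs this case split.

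\emph{Step 3.} Short exceptional routes do not sit where a row switches parts. The switching rows come from vertices with no sink edge, hence with no short exceptional route. Nor do short exceptional routes occur only at interior vertices of legs. Both endpoints of the inner path of $\Path(\bk)$ carry one, e.g.\ $(1,1)$ and $(10,10)$ in Figure~\ref{fig:path342diagram}. These are corners of the end blocks of $\lambda_{s,t}$, not of a staircase. Your forward direction must include them. Knowing that each row and column contains exactly one exceptional box does not by itself show that every exceptional box is an outer corner.
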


\begin{proof}
The outer corners of $\lambda_{s,t}$ correspond to the long exceptional routes.
On the inner side of each turn in $\lambda_{s,t}$, the coherence diagram contains a staircase partition diagram, in which the boxes correspond to routes that are supported on a fixed leg but contain no edges from $I_s\cup I_t$.
Unioning $\lambda_{s,t}$ with these staircase diagrams produces the full coherence diagram.
The outer corners of these staircase diagrams correspond to the short exceptional routes.
The condition regarding maximal rectangles defining outer corners follows from this description of the coherence diagram.
\end{proof}

\begin{remark}
    Mészáros, Morales, and Striker \cite{MMS19} give an integral equivalence between flow polytopes and order polytopes in the case of strongly planar DAGs and posets using a planar duality construction.
    Using this construction, for $G = \Path(\bk)$ drawn in the horizontal layout, the flow polytope $\calF_1(G)$ will be integrally equivalent to an order polytope of a generalized snake poset $P$, where generalized snake posets are a family of width-two posets whose order polytopes were studied by von Bell et al.~\cite{gensnakeposets}.
    Moreover, one can see the coherence diagram for $\Path(\bk)$ as the Hasse diagram of a poset, where elements are the boxes and cover relations are given by boxes sharing an edge oriented from left to right and top to bottom.
    In this setting, the lattice of order ideals $J(P)$ for the strongly planar poset dual to $\Path(\bk)$ will be isomorphic to the coherence diagram as a poset. 
\end{remark}

We will now define various regions of the coherence diagram with respect to a route, which we will use to combinatorially characterize coherence in the next section.

\begin{definition}\label{def:diagramregions}
    Let $R$ be a route in a locally anti-blocking framed DAG.
    The sets $\north(R)$, $\south(R)$, $\east(R)$, and $\west(R)$ are the regions of the coherence diagram strictly to the north, south, east, and west of $R$ respectively. I.e., $\north(R)$ consists of the boxes above $R$ in its column, $\east(R)$ consists of the boxes to the right of $R$ in its row, and so on.
    In the case of a cycle, when $R$ is in either the top or bottom two rows (which are identified along the lower right and upper left boundary), the regions $\east(R)$ and $\west(R)$ continue across the line of identification.

    We define four additional regions.      \begin{itemize}
            \item The region $\NW(R)$ consists of those boxes $R'$ such that there exists boxes $T_1$ and $T_2$ such that $\{T_1\}=\west(R) \cap \south(R')$ and $\{T_2\}=\north(R) \cap \east(R')$.
            \item The region $\NE(R)$ consists of those boxes $R'$ such that there exist boxes $T_1$ and $T_2$ such that $\{T_1\}=\east(R) \cap \south(R')$ and $\{T_2\}=\north(R) \cap \west(R')$.
            \item The region $\SW(R)$ consists of those boxes $R'$ such that there exist boxes $T_1$ and $T_2$ such that $\{T_1\}=\west(R) \cap \north(R')$ and $\{T_2\}=\south(R) \cap \east(R')$.
            \item The region $\SE(R)$ consists of those boxes $R'$ such that there exist boxes $T_1$ and $T_2$ such that $\{T_1\}=\east(R) \cap \north(R')$ and $\{T_2\}=\south(R) \cap \west(R')$.
        \end{itemize}
    We say that a route $S$ is \defn{northwest} of $R$ if $S\in \NW(R)$, and similarly for northeast, southwest, and southeast.
    Supposing $R$ is northwest of $S$, the \defn{2-by-2 array} subtended by $R$ and $S$, denoted $R\squarette S$, is the set $\{R, S, T_1, T_2\}$ where $T_1$ and $T_2$ are the routes required to define $R$ as northwest of $S$. Thus the notation $R\squarette  S$ implies $R\in \NW(S)$.
\end{definition}

\begin{example}
    Consider the diagram for $\Path(3,4,2)$ in Figure~\ref{fig:path342diagram}.
    The box for route pair $(4^-,5)$ (in row $5$, column $4^-$) is northwest of the box for route pair $(5,7)$ (in row $7$, column $5$), since we can see $\west(5,7)\cap \south(4^-,5)=\{(4^-,7)\}$ and $\north(5,7)\cap \east(4^-,5)=\{(5,5)\}$.
    However, the box for $(3,4^-)$ (in row $3$, column $4^-$) is not northwest of the box for $(5,7)$ since $\north(5,7)\cap \east(3,4^-)=\emptyset$.
\end{example}

Note that by definition we have $R\in \NW(S)$ if and only if $S\in \SE(R)$.
Also, we have $R\in \NE(S)$ if and only if $S\in \SW(R)$.
By design of the coherence diagram, for any route $R$ we have $\north(R)\cap \south(R) = \emptyset$ and $\west(R) \cap \east(R)=\emptyset$.
However, the pairwise mutual exclusivity of the regions $\NW, \NE, \SW$ and $\SE$ is only guaranteed when $G\neq \Cycle(k_1,k_2)$.
The exceptions to this in the case of $\Cycle(k_1,k_2)$ involve the anomalous routes.
For instance, if $R = (1^+, k_1^-)$ and $R' = (k_1^+, 1^-)$ then $R \in \NW(R')\cap \SE(R')$.

\begin{example}\label{ex:anomalousbadbehavior}
In diagram for $\Cycle(3,2)$ in Figure~\ref{fig:cycle32diagram}, the box with route pair $(1^+,4^-)$ is both northwest and southeast of the box with route pair $(4^+,1^-)$.
Even worse, the box with route pair $(1^-,4^-)$ is both northwest and northeast of the box with route pair $(4^+,1^+)$.
This violation of the exclusivity of regions only occurs in situations involving anomalous routes in $\Cycle(k_1,k_2)$.
\end{example}

\subsection{The Coherence Lemma}\label{sec:coherence}

In this subsection, we prove a lemma establishing how the coherence diagram can be used to identify conflicting pairs of routes and the minimal face for a pair of routes.
This lemma will be our key tool for studying pulling triangulations.

Recall, as shown in Example~\ref{ex:anomalousbadbehavior}, that it is possible for an anomalous route $R$ to be both northwest and northeast of another anomalous route $S$.
Hence, the following lemma handles the anomalous route case separately.

\begin{lemma}[The Coherence Lemma]\label{lem:thecoherencelemma}
    Let $R$ and $S$ be routes in a locally anti-blocking framed DAG $G$ with $\bg$-polytope $P$.
    \begin{enumerate}[label=(\roman*)]

    \item If both $R$ and $S$ are anomalous:
    \begin{enumerate}
        \item $S\in \NW(R)\cup \SE(R)$ if and only if $R$ and $S$ are in conflict, and in this case the entire polytope $P$ is the minimal face containing both $R$ and $S$. 
        \item If $R$ and $S$ are coherent:
        \begin{enumerate}
            \item If one if $R$ or $S$ is exceptional, the minimal face containing them is $P$.
            \item Otherwise, the minimal face containing them is the edge from $R$ to $S$.
        \end{enumerate}
    \end{enumerate}
    \item If $R$ and $S$ are not both anomalous:
    \begin{enumerate}
        \item $S\in \NW(R) \cup \SE(R)$ if and only if $S$ is in conflict with $R$.
        Further, setting $R\squarette  S =: \{R, S, T_1, T_2\}$ as in Definition~\ref{def:diagramregions}, we have:
    \begin{enumerate}
        \item if one of $\{T_1,T_2\}$ is exceptional, then $R$ and $S$ lie on different facets of $P$.
        \item Otherwise, the convex hull of the routes in $R\squarette  S$ forms the minimal face of $P$ containing $R$ and $S$.
        \end{enumerate}     
        \item $S \in \NE(R) \cup \SW(R)$ if and only if $S$ is coherent with $R$ and the line segment from $R$ to $S$ is not an edge of $P$.
        \item $S$ lies outside of $\NW(R)\cup \NE(R)\cup \SE(R)\cup \SW(R)$ if and only if $S$ is coherent with $R$ and the line segment from $R$ to $S$ is an edge of $P$.
    \end{enumerate}
    \end{enumerate}
\end{lemma}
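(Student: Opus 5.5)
The proof reduces everything to the earlier face results. Corollary~\ref{cor:incoherence-square} and Lemma~\ref{lem:number-components-of-route-intersection} already tell us, for a pair of routes, what the minimal face is in terms of whether $E_{\{R,S\}}$ contains an exceptional route and whether $R,S$ conflict. So the real content is a dictionary between the geometry of the coherence diagram and the route data (source edge, sink edge, leg label). We first make this dictionary explicit. By Definition~\ref{def:routepairlabels} and the construction of the diagram, a box in row $a$ and column $b$ is the unique route entering by source edge $a$ and leaving by sink edge $b$. Its leg label is determined by which side of the staircases or bounding strip the box lies on, using Lemma~\ref{lem:outercorners}.

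Consequently:
\begin{itemize}
\item Two boxes share a row (a column) exactly when the routes share their source edge (sink edge).
\item For $R\squarette S=\{R,S,T_1,T_2\}$, the routes $T_1,T_2$ are obtained by swapping the sink edges of $R$ and $S$.
\end{itemize}

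For case (ii)(a), take $R=(a,b)$ and $S=(a',b')$ with $S\in\NW(R)$. The existence of $T_1,T_2$ in the diagram means the swapped routes exist. Hence the source-to-inner segment of $S$ can be followed by the inner-to-sink segment of $R$ and vice versa, so the two routes share an inner subpath $[u,v]$. We then check, using the order~\eqref{eqn:order} and the lab framing, that the NW/SE orientation of the boxes forces the orders at $u$ and at $v$ to disagree, i.e.\ a conflict. We verify this leg-by-leg: both routes on the same leg, or on adjacent legs meeting at a vertex where the labels flip.

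For the NE/SW orientation the swapped boxes also exist, but the orders agree at the shared component, so $R,S$ are coherent. In that case the swapped routes $T_1,T_2$ show that $E_{\{R,S\}}$ contains further routes, namely a full exceptional route. This is the point we need to establish, and it is where the outer-corner/bounding-strip description helps: the NE corner region is bounded by an exceptional box in the same row or column. By Corollary~\ref{cor:incoherence-square}(i), the segment $RS$ is then not an edge.

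Conversely, if $S$ is in no diagonal region, then either $R$ and $S$ share a row or column, or the swapped routes do not exist. In either case $R\cap S$ has only the source and sink components, so $\kappa=2$. Hence they are coherent, and by Corollary~\ref{cor:incoherence-square}(ii)(b) the segment is an edge, provided $E_{\{R,S\}}$ contains no exceptional route. We must rule that out separately, for instance when one of $R,S$ is itself exceptional: a box sharing a row with an exceptional box gives an edge to $\mathbf 0$, which is not a vertex edge of $P$. That case needs to be handled by reading ``edge'' appropriately, or it may fall under the NE/SW case.

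The refinement (ii)(a)(i)–(ii) follows from Corollary~\ref{cor:incoherence-square}. In the conflict case with $\kappa=3$, the four routes in $E_{\{R,S\}}$ are exactly $R,S,T_1,T_2$. The set $E_{\{R,S\}}$ contains an exceptional route iff one of these four is exceptional, and $R,S$ themselves cannot be exceptional since they conflict. So the minimal face is either the quadrilateral $\conv(R\squarette S)$ or all of $P$. Case (i) for anomalous routes is a finite check on $\Cycle(k_1,k_2)$: the eight anomalous routes lie in $E_{\{L_1,L_2\}}$. Conflicting pairs realize $\kappa=4$ or contain $L_1,L_2$, so $P$ is the minimal face. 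The coherent pairs are enumerated directly against Example~\ref{ex:anomalousroutes}.

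The main obstacle is the uniform verification that the NW/SE versus NE/SW orientation matches conflict versus coherence together with the exceptional-route containment. This is especially delicate across leg turns and across the identified boundary in the cycle case. I would organize it by the position of $R$ relative to the staircases and the bounding strip $\lambda_{s,t}$, and treat the cycle wrap-around by reducing to the linear order on two adjacent legs.
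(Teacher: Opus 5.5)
\paragraph{The NE/SW step is wrong.} You claim that when $S\in\NE(R)\cup\SW(R)$, the swapped routes force $E_{\{R,S\}}$ to contain a full exceptional route, and you then apply Corollary~\ref{cor:incoherence-square}(i). This is false.

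In this configuration $\kappa=3$, so the routes in $E_{\{R,S\}}$ are exactly $R,S,T_1,T_2$. Here $T_1,T_2$ are as in Definition~\ref{def:diagramregions}, with $T_2\in\NW(T_1)$ and $T_2\squarette T_1=\{T_1,T_2,R,S\}$. So $T_1,T_2$ conflict and neither is exceptional. Hence $E_{\{R,S\}}$ contains an exceptional route only when $R$ or $S$ is itself exceptional; your own count in the conflict case already shows this.

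Concretely, take $\Path(6)$ from Example~\ref{eg.caracol6} with $R=(2,5)$ and $S=(1,6)$. They share the segment from vertex $2$ to vertex $5$ and are coherent there. Also $E_{\{R,S\}}$ contains exactly $(2,5),(1,6),(2,6),(1,5)$, none of them exceptional. No outer-corner argument can produce an exceptional route here. The shortcut ``coherent $\Rightarrow \kappa=2 \Rightarrow$ only two routes in $E_{\{R,S\}}$'' (Lemma~\ref{lem:number-components-of-route-intersection}(iii)(b), Corollary~\ref{cor:incoherence-square}(ii)(b)) fails for exactly these pairs, so you cannot use it either.

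The missing idea is the paper's quadrilateral argument. As flows $R+S=T_1+T_2$, so $\bg(R)+\bg(S)=\bg(T_1)+\bg(T_2)$, and the segment $RS$ is a diagonal of $\conv(T_2\squarette T_1)$. By Corollary~\ref{cor:components-and-minimal-face} (with $\kappa=3$), this quadrilateral is the minimal face containing $R,S$ when $E_{\{R,S\}}$ has no exceptional route; otherwise the minimal face is $P$. Either way $RS$ is not an edge.

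\paragraph{The exceptional-endpoint issue is real.} Your worry about exceptional endpoints in (ii)(c) is justified, and such pairs do not fall under NE/SW. In $\Path(6)$, take $R=(3,3)$ and $S=(3,5)$. They share a row, so $S$ lies in no diagonal region of $R$. Yet $\bg(R)=\bzero$ is the interior point of $P$, so $RS$ is not an edge. Thus the converse of (ii)(b) and the forward direction of (ii)(c) are false as stated whenever one route is exceptional. The paper's proof does not catch this either: it argues only the forward direction of (ii)(b) and then deduces (ii)(c) formally.

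The correct repair is to assume in (ii)(b)--(c) that neither route is exceptional, mirroring (i)(b). Under that hypothesis your $\kappa=2$ argument goes through. When $\kappa=2$, the only routes in $E_{\{R,S\}}$ are $R$ and $S$, so Corollary~\ref{cor:components-and-minimal-face} gives an edge directly.

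With these two repairs, the rest of your outline is in line with the paper's proof. That includes the source/sink-swap dictionary, the leg-by-leg check that NW/SE gives a conflict, recovering the converses from the three-way split of boxes, and the finite check for anomalous pairs.
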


Before proving this lemma, we consider two examples.

\begin{example}\label{eg.caracol6}
    Figure~\ref{fig:caracol-tableau} shows the caracol graph $\Path(6)$ and its coherence diagram.
    Following our lab framing convention, the horizontal edges are framed with $2$'s and the arc edges are framed with $1$'s.
    This graph has eight exceptional routes, which are $(1^-,7^+)$ and $(i,i)$ for $i=1,\ldots, 7$.
    The boxes in the coherence diagram that contain dots correspond to the exceptional routes.
    The route $R=(2,5)$ has an open circle in its box.
    The routes corresponding to the green boxes are coherent with $R$ and share an edge with $R$ in the $\bg$-polytope $P$.
    The routes corresponding to the blue boxes are coherent with $R$, but do not share an edge in $P$.
    The routes corresponding to the red and yellow boxes are in conflict with $R$, and the routes corresponding to the yellow boxes do not lie on a common facet with $R$ in $P$.
\end{example}

\begin{figure}
    \centering
    \includegraphics[height=1.7in]{figures/caracol6_graph.tex}
    \hspace{1cm}
    \includegraphics[height=2in]{figures/caracol_tableau.tex}
    \caption{
    On the left is the caracol graph whose inner subgraph is $\Path(6)$.
    On the right is the coherence diagram for this caracol graph.
    See Example~\ref{eg.caracol6} for an explanation of the color-coded boxes in the coherence diagram.
}
    \label{fig:caracol-tableau}
\end{figure}

\begin{example}\label{ex:anomalouslemma}
    Consider $\Cycle(3,2)$ with coherence diagram given in Figure~\ref{fig:cycle32diagram}.
    The anomalous routes are indicated by diamonds.
    The route $(1^-,4^-)$ is northwest of $(1^+,4^+)$, $(4^+,1^-)$, and $(4^+,1^+)$, and it is straightforward to check that these three routes are the only anomalous routes in conflict with $(1^-,4^-)$.

    The route $(1^+,4^-)$ is southeast of $(4^-,1^-)$, due to the identified edges on the diagram, and it is northwest of $(4^+,1^-)$ and $(4^+,1^+)$.
    It is straightforward to check that these are the only anomalous routes in conflict with $(1^+,4^-)$.

    The route $(1^+,4^+)$ is southeast of $(1^-,4^-)$, $(4^-,1^-)$, and $(4^+,1^-)$, due to the identified edges on the diagram.
    One can check that these are the only conflicting anomalous routes with $(1^+,4^+)$.

     Further, it is straightforward to check that the union of edges in any of these conflicting pairs contains an exceptional route, and hence by Corollary~\ref{cor:incoherence-square} the minimal face containing that pair is $P$.
\end{example}

\begin{proof}[Proof of Lemma~\ref{lem:thecoherencelemma}]

The proof of \textit{(i)} is a tedious case-by-case verification.
Note that there are only eight anomalous routes in any $\Cycle(k_1,k_2)$, and two of them are exceptional.
These cases are an extension of the partial analysis of pairs of anomalous routes given in Example~\ref{ex:anomalouslemma}.

We now consider the proof of \textit{(ii)}, which is the general case.
We begin by proving \textit{(ii)(a)}, which states $S\in \NW(R) \cup \SE(R)$ if and only if $S$ is in conflict with $R$.
Observe that once we prove \textit{(ii)(a)}, the claims in \textit{(ii)(a)(i)} and \textit{(ii)(a)(ii)} follow immediately from Corollary~\ref{cor:incoherence-square}.
For the forward implication, without loss of generality suppose that $S\in \NW(R)$.
The routes in $R\squarette  S$ correspond to the bounding corners of a rectangle within the coherence diagram.
We argue by cases as follows.
\begin{enumerate}
    \item Suppose three things: $R\squarette S$ is contained in the bounding strip, vertex $i$ is adjacent to the source, and $S$ contains $i^-$ and $R$ contains $i^+$ (the case where $i$ is adjacent to the sink is a similar argument).
    One of $i^+$ or $i^-$ might be denoted by $i$ in the case that $G=\Path(\bk)$ and $i$ is a leaf of the inner graph. 
    If the edges connecting $S$ and $R$ to the source are on different legs, then the routes conflict at $i$.
    Otherwise, since $R$ is to the right of $S$ in the coherent diagram, either $R$ exits from the leg containing $R\cap S$ on a different vertex from the one where $S$ exits or they both exit the leg at the final possible vertex.
    If they exit on different vertices and if $R\cap S$ lies on a $2$-leg, then $R$ exits the leg after $S$.
    If they exit on different vertices and if $R\cap S$ lies on a $1$-leg, then $S$ exits the leg after $R$.
    In both cases, $R$ and $S$ are in conflict.
    If $R$ and $S$ both enter and exit the leg at the source- and sink-adjacent vertices, respectively, then (recalling that $R$ and $S$ are not both anomalous) $R\squarette S$ must form a contiguous $2\times 2$ square in a corner of the bounding strip.
    Hence, $R\cap S$ is an entire leg and we have that $R$ and $S$ conflict across the leg.
    \item Suppose that $R\squarette S$ is not contained in the bounding strip, i.e., at least one element of $R\squarette S$ is contained in a staircase shape outside of the bounding strip, so that element corresponds to a route passing through the inner vertices of a fixed leg $L$ in $G$.
    Every element of $R\squarette S$ therefore is contained in the portion of the coherence diagram given by the routes that lie on $L$.
    Thus, the first and last inner vertices in the route $(R\cup S)\cap L$ are distinct, and since $S\in \NW(R)$, one of these is contained in $R$ and the other is contained in $S$.
    Since $R$ and $S$ lie on the same leg, and since the framing labels are constant for the edges on the leg, it must be that $R$ and $S$ conflict across $R\cap S$.
\end{enumerate}

We next prove the reverse implication for \textit{(ii)(a)}.
Suppose that $R$ and $S$ are in conflict, where $R$ and $S$ are not both anomalous.
The connected path $R\cap S$ must be contained within a leg of $G$, since the inner graph of $G$ is either a path or a cycle.
We consider the following cases.
\begin{enumerate}
    \item Suppose that for the conflicting routes $R$ and $S$, there exists a vertex $i$ with two edges connected to either the source or the sink such that one of the routes contains $i^+$ while the other contains $i^-$.
    Note that in the case where $G=\Path(\bk)$, one of these edges might be denoted by $i$ if the vertex $i$ is a leaf of the inner graph.
    We will assume that $i$ is adjacent to the source, as the argument is similar when $i$ is adjacent to the sink.
    Thus, $R$ and $S$ are both contained in the bounding strip for the diagram, within the rectangle indexed by rows $i^+$ and $i^-$.
    Further, $R$ and $S$ are in different rows and columns, since they must have distinct sink-adjacent edges, otherwise the routes would be identical to the right of vertex $i$.
    The other two routes contained in $R\cup S$ are precisely the routes given by the corners of the rectangle in the diagram defined by $R$ and $S$, as the route pairs for those two routes are obtained by exchanging the sink-adjacent edges in the route pairs for $R$ and $S$.
    Hence, either $R\in \NW(S)$ or $S\in \NW(R)$.
    \item Suppose that $R\cup S$ does not contain both $i^+$ and $i^-$ for any source-adjacent or sink-adjacent vertex $i$.
    In this case, the source-adjacent vertices for $R$ and $S$ are distinct, as are the sink-adjacent edges.
    Thus, the route pairs for $R$ and $S$ have distinct first coordinates and distinct second coordinates, and the other two routes in $R\cup S$ are obtained by exchanging the second coordinates in the route pairs for $R$ and $S$.
    This exchange of second coordinates produces routes corresponding to the $T_1$ and $T_2$ in $R\squarette S$, and hence either $R\in \NW(S)$ or $S\in \NW(R)$.
\end{enumerate}

We next prove cases \textit{(ii)(b)} and \textit{(ii)(c)}. 
Observe that due to case \textit{(ii)(a)}, it is immediate that $R$ and $S$ are coherent if and only if $S\notin \NW(R)\cup \SE(R)$.
This establishes the ``coherence'' condition in the if and only if statements for \textit{(ii)(b)} and \textit{(ii)(c)}.
We need to verify that $S\in \NE(R)\cup\SW(R)$ precisely when the line segment from $R$ to $S$ is not an edge of $P$.
Without loss of generality, suppose $S\in \NE(R)$.
By definition of $\NE(R)$, we have $S\in \NE(R)$ if and only if there exist conflicting routes $T$ and $U$ such that $T\squarette U=\{T,U,R,S\}$.
By part \textit{(ii)(a)} of Corollary~\ref{cor:incoherence-square}, it follows that the minimal face containing $T\squarette U$ is the quadrilateral face that is the convex hull of their $\bg$-vectors.
Because the quadrilateral is the smallest face containing $T$ and $U$, the line segment from $R$ to $S$ passes through the interior of this quadrilateral, hence is not an edge of $P$.

As a consequence of \textit{(ii)(a)} and \textit{(ii)(b)}, we have that routes outside of $\NW(R)\cup \NE(R)\cup \SE(R)\cup \SW(R)$ are precisely those that are coherent and for which the line segment in $P$ between them forms an edge of $P$.
This completes the proof of \textit{(ii)(c)}, and thus the proof of the lemma is complete.
\end{proof}

\subsection{Pulling triangulations and \texorpdfstring{$\bg$}{g}-polytopes}

Our goal in this subsection is to introduce the concept of pull-coherence and characterize those pulling sequences of the $\bg$-polytopes for $\Path(\bk)$ and $\Cycle(\bk)$, excluding $\Cycle(k_1,k_2)$, that yield DKK triangulations.
Throughout this subsection, we identify both a route and its $\bg$-vector by the same symbol.

\begin{definition}
    \label{def:pullcoherent}
    Let $P$ be $\bg$-polytope for $\Path(\bk)$ or $\Cycle(\bk)$, and let $\vec{0}$ denote the origin.
    A sequence of routes starting with the origin, denoted $V=(\vec{0},R_1,\ldots,R_i)$, is called \defn{pull-coherent} if it yields a pulling subdivision $\pull(P;V)$ that is refined by the DKK triangulation.
\end{definition}

\

The following lemmas form the core of our arguments regarding pulling triangulations that refine the DKK triangulation.

\begin{lemma}
    \label{lem:pullorigin}
    Let $P$ be the $\bg$-polytope for $\Path(\bk)$ or $\Cycle(\bk)$.
    The subdivision obtained by pulling $P$ at the origin is refined by the DKK triangulation of $P$.
\end{lemma}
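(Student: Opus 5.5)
The plan is to apply Theorem~\ref{thm:iteratedpulling} with $\calS=\{P\}$ and $\calT$ the DKK triangulation of $P$ from Proposition~\ref{prop:dkkgpolytope}. The trivial subdivision $\{P\}$ is refined by every triangulation of $P$, so that theorem reduces the claim to two facts. First, the origin $\bzero$ must be a vertex of $\calT$. Second, $\bzero$ must lie in every maximal simplex of $\calT$.

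For the first fact, $\bzero=\bg(E)$ for every exceptional route $E$, and exceptional routes lie in every maximal clique. So $\bzero$ is a vertex of each $\bg$-simplex.

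The second fact is where the work lies. Each maximal clique $M$ of $(G,F)$ contains all exceptional routes, since exceptional routes are coherent with every route. Hence $\bzero\in\bg(M)$ for every maximal simplex. The subtle point is that many routes in $M$ map to the same point $\bzero$. So I must check that $\bg(M)$ really is a full-dimensional simplex of $P$ with $\bzero$ as a vertex, rather than a degenerate image.

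This is exactly the content of Theorem~\ref{thm:brunsromer}. The DKK triangulation is the join $S\star\Delta$, where $\Delta$ is the special simplex spanned by the exceptional routes. The projection $\bg$ collapses $\Delta$ to $\bzero$ and sends $S\star\Delta$ to the cone from $\bzero$ over the sphere $S$. Concretely, each maximal simplex of $\calT$ has the form $\conv(\bzero,\bg(\sigma))$, where $\sigma$ is a maximal face of $S$ consisting of non-exceptional routes.

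I would also note directly that, for $\calT$ to refine the subdivision, each such simplex must sit inside $\conv(\bzero,F')$ for some facet $F'$ of $P$. This follows from Proposition~\ref{prop:dagfacesgpolytope}. The routes in $\sigma$ form a clique with no exceptional route among them, and their edge union avoids a transversal of the exceptional routes. Equivalently, $\sigma$ is contained in the equatorial (boundary) complex, so $\bg(\sigma)$ lies in a proper face, hence in a facet.

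The main obstacle is making this last identification rigorous, namely that non-exceptional routes in a clique lie on a common facet. For that step I would rely on Theorem~\ref{thm:brunsromer}'s statement that the boundary of $P$ is triangulated by the image of $S$, rather than proving the transversal property directly.

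With both conditions verified, Theorem~\ref{thm:iteratedpulling} gives that $\calT$ refines $\pull(\{P\};\bzero)$.
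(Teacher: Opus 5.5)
Your proposal is correct and takes essentially the paper's approach. The paper's one-line proof is that the origin, the common $\bg$-vector of the exceptional routes (which lie in every maximal clique), is a cone point of the DKK triangulation, so Theorem~\ref{thm:iteratedpulling} applies with $\calS=\{P\}$. Your extra facet-containment check via Proposition~\ref{prop:dagfacesgpolytope} and Theorem~\ref{thm:brunsromer} is valid but redundant: Theorem~\ref{thm:iteratedpulling} already supplies it, and Proposition~\ref{prop:dkkgpolytope} already guarantees the $\bg$-simplices are nondegenerate.
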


\begin{proof}
    This follows from the fact that the DKK triangulation $\calT$ has the origin as a cone point, i.e., the origin is contained in every simplex in $\calT$.
\end{proof}

\begin{lemma}
    \label{lem:pullcoherencerefinement}
    Let $P$ be the $\bg$-polytope for $\Path(\bk)$ or $\Cycle(\bk)$, excluding $\Cycle(1,1)$.
    Let $\calT$ denote the DKK triangulation of $P$ and let $\calS=\pull(P;V)$ where $V=(\vec{0},R_1,\dots, R_i)$ is a pull-coherent sequence of routes.
    Let $R\notin V$ be a route.
    The triangulation $\calT$ is a refinement of $\pull(\calS;R)$ if and only if the following two conditions hold:
     \begin{enumerate}[label=\alph*)]
         \item $\west(R)\subseteq V$ or $\north(R)\subseteq V$
         \item $\east(R)\subseteq V$ or $\south(R)\subseteq V$
     \end{enumerate}   
\end{lemma}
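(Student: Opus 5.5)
We reduce the claim to Theorem~\ref{thm:iteratedpulling}. Since $V$ is pull-coherent, $\calT$ refines $\calS$. So $\calT$ refines $\pull(\calS;R)$ if and only if $R$ lies in every simplex of $\calT$ contained in a cell of $\calS$ containing $R$. Equivalently, $R$ must be coherent with every route $S$ that is still cell-neighboring to $R$ in $\calS$. Indeed, a simplex of $\calT$ in such a cell that omits $R$ can be extended by $R$ exactly when $R$ is coherent with all its vertices. Conversely, if some cell-neighbor $S$ conflicts with $R$, a maximal simplex of $\calT$ inside the cell containing $S$ cannot contain $R$. The plan is therefore to identify which conflicting routes survive as cell-neighbors of $R$ in $\calS$, and to show that all of them have been separated from $R$ exactly when (a) and (b) hold.

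By the Coherence Lemma~\ref{lem:thecoherencelemma}(ii)(a), a route $S$ conflicting with $R$ lies in $\NW(R)\cup\SE(R)$. Such an $S$ comes with $R\squarette S$ (or $S\squarette R$) $=\{R,S,T_1,T_2\}$, where $T_1,T_2$ lie in $\west(R)\cup\north(R)$ for $S\in\NW(R)$ and in $\east(R)\cup\south(R)$ for $S\in\SE(R)$. If one of $T_1,T_2$ is exceptional, then $R$ and $S$ lie on different facets. They are then already separated after the initial pull at the origin: by Theorem~\ref{thm:pulling1}, the origin lies in the minimal face, namely $P$. Otherwise the minimal face is the quadrilateral $\conv(R\squarette S)$. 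Repeatedly applying Theorem~\ref{thm:pulling1} along $V$, we get that $R$ and $S$ are separated in $\calS$ if and only if some element of $V$ lies in that quadrilateral. The only lattice points of this quadrilateral are its four vertices, by Lemma~\ref{lem:daggpolytopeintegerpoints}. Since $R,S\notin$ the already-separated set, this happens iff $T_1\in V$ or $T_2\in V$. One subtlety is that Theorem~\ref{thm:pulling1} requires $R,S$ to remain cell-neighboring up to that pull. I would handle this by induction over the sequence $V$: separation is monotone under refinement, so the first pull lying in the minimal face is what separates them.

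It remains to translate \emph{every $S\in\NW(R)$ has a $T_i\in V$ or exceptional} into condition (a), and symmetrically for $\SE$ and (b). For (a) implies this: if $\west(R)\subseteq V$, then the witness $T_1\in\west(R)\cap\south(S)$ is in $V$, and similarly for $\north(R)$. Here exceptional routes count as in $V$, since $\vec 0\in V$ is the $\bg$-vector of all exceptional routes. For the converse, suppose some $W\in\west(R)\setminus V$ and some $N\in\north(R)\setminus V$, with both non-exceptional. Then consider the box $S$ in the row of $N$ and the column of $W$. We must show it exists in the skew Ferrers diagram (Lemma~\ref{lem:outercorners}). Its existence follows from the skew shape, since the diagram is convex in rows and columns and rows and columns are intervals of a skew shape. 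Then $S\in\NW(R)$ with witnesses $W,N$, neither in $V$. In addition, $S\notin V$, because pulled vertices must themselves be pull-coherent. This is the most delicate point: I would argue that if $S\in V$ then $R$ and $S$ are already separated by $S$'s own pull, so we may still conclude conflict persists through $W,N$. Alternatively, one can choose $W,N$ closest to $R$ so that the resulting $S$ is as well.

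I expect the main obstacle to be this converse direction together with the cycle case, where $\east/\west$ wrap across the identified boundary. This is why $\Cycle(1,1)$ is excluded, and anomalous routes in $\Cycle(k_1,k_2)$ need Lemma~\ref{lem:thecoherencelemma}(i), where all conflicting anomalous pairs are separated already at the origin. I would verify these wrap-around rectangles directly from Definition~\ref{def:diagramregions}.
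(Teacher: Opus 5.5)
Your treatment of non-anomalous $R$ follows the paper's proof. You reduce via Theorem~\ref{thm:iteratedpulling} to ``no conflicting route is still cell-neighboring $R$ in $\calS$''. You locate conflicts with Lemma~\ref{lem:thecoherencelemma}(ii)(a), and you apply Theorem~\ref{thm:pulling1} to the quadrilateral spanned by the $2$-by-$2$ array.

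\textbf{Main gap: the forward direction for the two non-adjacent anomalous routes.} The forward direction says that failure of (a) or (b) forces non-refinement. Let $N=k_1+1$ in $\Cycle(k_1,k_2)$ and consider $R=(1^+,N^-)$; the route $(N^+,1^-)$ is handled the same way. Here $\north(R)=\{(1^-,N^-)\}$, and (a) can fail with $\west(R)\setminus V=\{(N^+,1^+)\}$.
\begin{itemize}
\item Your construction then returns $S=(N^+,1^-)$, which is anomalous. By Lemma~\ref{lem:thecoherencelemma}(i)(a), $R$ and $S$ were already separated by the pull at $\vec 0$, so no conflicting cell-neighbor is produced.
\item Your own remark that conflicting anomalous pairs are separated at the origin is exactly what breaks this direction. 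Condition (b) fails the same way, with witnesses $(1^+,N^+)$ and $(N^-,1^-)$.
\item This is not cosmetic. In $\Cycle(1,1)$ with $V=(\vec 0)$ this configuration occurs, yet pulling the origin already gives the DKK triangulation, so the lemma is false there.
\item Your argument never uses $\max(k_1,k_2)\ge 2$, so it cannot be complete. The exclusion of $\Cycle(1,1)$ is about this case, not about wrap-around rectangles.
\end{itemize}
The paper treats these anomalous routes separately. One clean way to close the gap is to show that pull-coherence of $V$ rules the configuration out.
\begin{itemize}
\item If $k_1\ge 2$, pick $2\le j\le N-1$. Then $(1^+,j)\in\west(R)$, so $(1^+,j)\in V$.
\item $(1^+,j)$ conflicts with $(N^+,1^-)$ across the quadrilateral with corners $(N^+,1^+)\notin V$ and $(1^-,j)$. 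So $(1^-,j)$ must have been pulled before $(1^+,j)$.
\item But $(1^-,j)$ conflicts with $R\notin V$ across the quadrilateral with corners $(1^+,j)$ and $(1^-,N^-)\notin V$. So $(1^+,j)$ must have been pulled before $(1^-,j)$, a contradiction.
\item If $k_1=1$, run the same argument with $(m,1^\pm)$ for an interior vertex $m$ of the $1$-leg.
\end{itemize}
Hence a non-anomalous witness in $\west(R)\setminus V$ always exists. It yields a non-anomalous $S$, to which your quadrilateral argument applies.

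\textbf{Smaller errors.}
\begin{itemize}
\item Your claim that if $S\in V$ then ``$R$ and $S$ are already separated by $S$'s own pull'' is backwards. By Theorem~\ref{thm:pulling1}, pulling at $S$ never separates $S$ from $R$; it creates the edge $RS$. The correct argument, as in the paper, is that neither corner lies in $V$. So a pull at $S$ would have put the conflicting edge $RS$ into $\calS$, contradicting that $\calT$ refines $\calS$.
\item For the same reason, your criterion ``separated iff some element of $V$ lies in the quadrilateral'' holds only because pull-coherence forbids $S$ from being pulled before a corner.
\item Convexity of rows and columns does not produce the corner box $S$. The English-convention shape $(2,2)/(1)$ has interval rows and columns but lacks the upper-left box spanned by its lower-left and upper-right boxes. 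What you need is that both boundary paths of the coherence diagram run from northwest to southeast. Then a lower-left box and an upper-right box always span a rectangle inside the diagram.
\end{itemize}
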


\begin{proof}
    First, suppose $R$ is not anomalous.
    We prove the forward direction by contrapositive.
    Suppose that we have one of the following two conditions:
    \begin{itemize}
         \item[$a')$] $\west(R)\nsubseteq V$ and $\north(R)\nsubseteq V$
         \item[$b')$] $\east(R)\nsubseteq V$ and $\south(R)\nsubseteq V$
     \end{itemize}
     Without loss of generality, suppose $(a')$ holds.
     In this case, there exist a pair of routes $T_1\in W(R)\setminus V$ and $T_2\in N(R) \setminus V$ in the coherence diagram, which therefore yield a route $S\in NW(R)$ such that $R\squarette S=\{R,S,T_1,T_2\}$.
     Consider Lemma~\ref{lem:thecoherencelemma} \textit{(ii)(a)} applied to $R$ and $S$, implying that these routes are in conflict. Note that neither $T_1$ nor $T_2$ are exceptional (otherwise they would already be pulled), and so we are in case Lemma~\ref{lem:thecoherencelemma} \textit{(ii)(a)(ii)}. Therefore, the minimal face containing $R$ and $S$ is the quadrilateral convex hull of $R\squarette S$. We then cannot have $S\in V$, as this would have created the edge from $R$ to $S$, which would contradict that the DKK triangulation refines $\calS$ since $S$ is in conflict with $R$.
     But then pulling at $R$ would create the edge from $R$ to $S$, which means that $\calT$ is not a refinement of $\pull(\calS;R)$.

     For the reverse direction, suppose $S$ is a route that is in conflict with $R$.
     Suppose that $S\in \NW(R)$ (the case where $S\in \SE(R)$ is similar).
     Then there exists routes $T_1\in \west(R)$ and $T_2\in \north(R)$ such that $R\squarette S=\{R,S,T_1,T_2\}$.
     Without loss of generality, suppose $\west(R)\subseteq V$.
     Then $T_1$ has already been pulled in the sequence $V$, and by Theorem~\ref{thm:pulling1}, we have that $R$ and $S$ are separated.
     Thus, for any cell $Y$ in $\calS$ that contains $R$, every vertex of $Y$ is coherent with $R$. 
     Consider any maximal collection of coherent vertices of $Y$.
     This collection must contain $R$, since $R$ is coherent with each of them. 
     Hence, for any $T\in \calT$ with $T\subseteq Y$, the route $R$ is contained in $T$. 
     Thus, by Theorem~\ref{thm:iteratedpulling}, $\calT$ refines the subdivision obtained by pulling $\calS$ at $R$.

     Next, we consider the case where $R$ is anomalous. 
     The two anomalous exceptional routes correspond to the origin and hence are always in $V$. 
     We begin by showing the lemma applies when $R$ is one of the four anomalous routes that are adjacent (along an edge of the square) to an anomalous exceptional route in the coherence diagram. 
     In this case, $R$ satisfies the two conditions \textit{(a)} and \textit{(b)} of the lemma, and the forward direction follows immediately.
     For the reverse direction, we need to show that $\pull(\calS;R)$ is refined by $\calT$.
     Suppose $S$ is a route in conflict with $R$. 
     If $S$ is anomalous, it is already separated from $R$ by the initial pull at the origin, because in this case $R$ and $S$ lie on different facets by Lemma~\ref{lem:thecoherencelemma}.
     If $S$ is not anomalous, then $R\squarette S$ must contain an exceptional anomalous route, and by Lemma~\ref{lem:thecoherencelemma}~\textit{(ii)(a)(i)}, the vertices $R$ and $S$ lie on different facets and were separated by the initial pull at the origin.
     Hence, by the same argument as given in the non-anomalous case, we have that $\calT$ refines $\calS$.
    
    Finally, suppose that $R$ is anomalous but is not adjacent to an anomalous exceptional, i.e., $R$ is $(1^+, N^-)$ or $(N^+, 1^-)$ where our DAG is $\Cycle(k_1,k_2)$ and $N:=k_1+1$.
    Consider the case where $R= (1^+, N^-)$; the case of $(N^+, 1^-)$ follows from a similar argument. 
    
    We prove the forward direction by contrapositive. 
    Assume that we have a pair of routes $T_1\in \west(R)\setminus V$ and $T_2\in \north(R) \setminus V=\{(1^-,N^-)\}$, hence $T_2=(1^-,N^-)$; the case $T_1\in \east(R)\setminus V$ and $T_2\in \south(R) \setminus V$ follows from an identical argument. 
    This gives us a route $S$ with $R\squarette S = \{(1^+,N^-),S,T_1, (1^-,N^-)\}$. 
    If $S$ is not anomalous, then $T_1$ is not exceptional.
    Thus, the subdivision $\pull(\calS;R)$ is not refined by $\calT$, because $S$ could not have already been in $V$ (or else the edge from $R$ to $S$ would be in $\calS$, which contradicts that $\calT$ refines it) and thus if we pull at $R$, then the edge from $R$ to $S$ is included in the new subdivision, which is not refined by $\calT$.
    So, supposing instead that $S$ is anomalous, we have two cases: either $S=(N^-,1^-)$ or $S=(N^+,1^-)$.
    Suppose first that $S=(N^-,1^-)$, which implies that $T_1=(N^-,1^+)$.
    This is a contradiction, as $(N^-,1^+)$ is exceptional and hence already in $V$.
    Next suppose that $S=(N^+,1^-)$, which implies that $T_1=(N^+,1^+)$.
    In this case, $R$ and $S$ are separated in $\pull(P;V)$ by Lemma~\ref{lem:thecoherencelemma}~\textit{(i)(a)}.
    Observe that if any of the non-anomalous routes in the column indexed by $N^+$ are not in $V$, then pulling at $R$ introduces a conflicting edge and we are done.
    So, assume that all of the non-anomalous routes in the column indexed by $N^+$ are in $V$.
    Since $\max(k_1,k_2)>1$, there is at least one such route.
    This route is in conflict with $T_2=(1^-,N^-)$, which we assumed is not in $V$.
    However, this means that pulling $V$ did not yield a subdivision refined by $\calT$, and thus pulling $R$ similarly does not yield a subdivision refined by $\calT$.
    
    To prove the reverse implication in the case where $R= (1^+, N^-)$, consider any route $S$ in conflict with $R$. 
    If $S$ is anomalous, then $S$ is separated from $R$ by the initial pull at the origin. If $S$ is not anomalous, then our earlier argument in the non-anomalous case applies and shows that $\pull(\calS;R)$ is refined by $\calT$.
\end{proof}

\begin{definition}
    \label{def:dkkpullstep}
    Let $P$ be the $\bg$-polytope for $\Path(\bk)$ or $\Cycle(\bk)$, excluding $\Cycle(1,1)$.
    Let $V=(\vec{0},R_1,\dots, R_i)$ be a pull-coherent sequence for $P$.
    For a route $R\notin V$, we say the sequence $V=(\vec{0},R_1,\dots, R_i,R)$ is a \defn{DKK pull step} if it satisfies the condition given in Lemma~\ref{lem:pullcoherencerefinement}, i.e., if the following both hold.
    \begin{enumerate}[label=\alph*)]
         \item $\west(R)\subseteq V$ or $\north(R)\subseteq V$
         \item $\east(R)\subseteq V$ or $\south(R)\subseteq V$
     \end{enumerate}  
\end{definition}

The following corollary characterizes the pulling sequences that yield the DKK triangulation of $P$.

\begin{corollary}\label{cor:characterizing-pull-coherent-routes}
    Let $P$ be the $\bg$-polytope for $\Path(\bk)$ or $\Cycle(\bk)$, excluding $\Cycle(1,1)$.
    A pulling sequence for $P$ yields the DKK triangulation if and only if it consists entirely of DKK pull steps.
\end{corollary}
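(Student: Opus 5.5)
Since the paper has already done most of the work, I plan to argue by induction on the length of the pulling sequence, with Lemma~\ref{lem:pullorigin} and Lemma~\ref{lem:pullcoherencerefinement} doing the heavy lifting. First I need to settle what ``pulling sequence for $P$'' means. I take it to be a sequence of lattice points of $P$ that contains every lattice point. By Lemma~\ref{lem:daggpolytopeintegerpoints}, these are the origin together with the $\bg$-vectors of the non-exceptional routes. As explained at the start of Section~\ref{sec:minimalfaces}, any sequence producing the DKK triangulation must begin at $\vec{0}$. The reason is Theorem~\ref{thm:iteratedpulling}: the first pulled point must lie in every simplex of $\calT$, and no non-exceptional route is coherent with all routes. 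So I restrict to sequences $V=(\vec{0},R_1,\dots,R_m)$ and interpret ``consists entirely of DKK pull steps'' as the condition that each prefix $(\vec{0},R_1,\dots,R_j)$ is a DKK pull step relative to $(\vec{0},R_1,\dots,R_{j-1})$.

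For the direction ``DKK pull steps $\Rightarrow$ DKK triangulation'', I induct on $j$. The base case is Lemma~\ref{lem:pullorigin}, which says the DKK triangulation $\calT$ refines $\pull(P;\vec{0})$, so $(\vec{0})$ is pull-coherent. For the inductive step, if the prefix $V_{j-1}$ is pull-coherent and $R_j$ satisfies conditions (a) and (b), then Lemma~\ref{lem:pullcoherencerefinement} shows that $\calT$ refines $\pull(\pull(P;V_{j-1});R_j)$, so $V_j$ is pull-coherent. Once every lattice point of $P$ has been pulled, the final subdivision $\calS$ is a triangulation whose vertices are all the lattice points, and $\calT$ refines it.

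I then need to show that $\calS=\calT$. Every cell of $\calS$ is a lattice simplex that is triangulated by cells of $\calT$ using only its own lattice points as vertices. Any simplex all of whose lattice points are vertices cannot be subdivided further without adding new vertices, so each cell of $\calS$ is exactly one cell of $\calT$. I expect this to be the only genuinely new point in this direction. An alternative route is the fact that $P$ is compressed orthant-wise (Theorem~\ref{thm:labcompressed}), so every pulling triangulation is unimodular; the volumes of $\calS$ and $\calT$ then match, and refinement forces equality.

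For the converse, suppose the full sequence yields $\calT$. I claim every prefix is pull-coherent. To see this, note that each later pull only refines the earlier subdivision, so $\calT$, which refines the final subdivision, also refines each intermediate $\pull(P;V_j)$. Then Lemma~\ref{lem:pullcoherencerefinement}, applied to the pull-coherent prefix $V_{j-1}$ and the route $R_j$, forces conditions (a) and (b). Hence every step is a DKK pull step.

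Both directions rely on the hypotheses of Lemma~\ref{lem:pullcoherencerefinement}, which is why $\Cycle(1,1)$ is excluded. I expect the main subtlety to be bookkeeping rather than geometry. The first thing to check is that exceptional routes, all of which map to $\vec{0}$, are counted as members of $V$ from the start; conditions (a) and (b) involve exceptional boxes, so they must be treated this way for the argument to go through. The second is whether repeated pulls of the same point, or the anomalous cases, need separate mention. I would dispose of the anomalous cases by noting that Lemma~\ref{lem:pullcoherencerefinement} already treats them.
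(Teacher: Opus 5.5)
Your proof is correct and follows the paper's argument, which likewise applies Lemma~\ref{lem:pullcoherencerefinement} prefix by prefix in both directions. Your version also makes explicit several points the paper leaves tacit: the sequence must start at $\vec{0}$, exceptional boxes count as already pulled, and the ``if'' direction needs every lattice point to be pulled, after which a triangulation refined by $\calT$ whose cells contain no lattice points other than their vertices must equal $\calT$. One caution: Theorem~\ref{thm:labcompressed} alone does not justify your fallback, because compressedness of each $P_\sigma$ concerns pulling triangulations of $P_\sigma$, not cells of a pulling triangulation of $P$, which may cross orthants; your primary argument does not need it.
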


\begin{proof}
For the forward implication, we argue the contrapositive. 
If some route in the pulling sequence is not a DKK pull step, then by Lemma~\ref{lem:pullcoherencerefinement}, the resulting subdivision is not refined by the DKK triangulation.
For the reverse directions, if the pulling sequence consists entirely of DKK pull steps, then by Lemma~\ref{lem:pullcoherencerefinement}, it yields the DKK triangulation.
\end{proof}

We thus arrive at the following theorem.

\begin{theorem}
    \label{thm:dkkpulling}
    Let $P$ be the $\bg$-polytope for $\Path(\bk)$ or $\Cycle(\bk)$.
    The DKK triangulation of $P$ is a pulling triangulation.
\end{theorem}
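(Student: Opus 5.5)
By Corollary~\ref{cor:characterizing-pull-coherent-routes}, outside the excluded case $\Cycle(1,1)$ it suffices to exhibit a pulling sequence consisting entirely of DKK pull steps that eventually lists every route. The plan is to build this sequence greedily on the coherence diagram. We begin with $V=(\vec{0})$, which is pull-coherent by Lemma~\ref{lem:pullorigin}. Since $\vec{0}$ is the common $\bg$-vector of all exceptional routes, we treat every exceptional route (every dotted box) as already lying in $V$.

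At each stage we must show that if some non-exceptional route is not yet in $V$, then some such $R\notin V$ satisfies conditions (a) and (b) of Definition~\ref{def:dkkpullstep}. We propose pulling the boxes row by row from top to bottom, and within each row from left to right. In the cycle case, the two bottom rows are the identified continuation of the top rows, so the order will need care there.

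For a box $R$ in this order, the set $\north(R)$ is contained in earlier rows, so it already lies in $V$. This gives (a). For (b), we need $\east(R)\subseteq V$ or $\south(R)\subseteq V$. The former fails in general, so instead we plan a diagonal or "corner-peeling" order that uses the skew Ferrers shape of Lemma~\ref{lem:outercorners}. We repeatedly choose a box $R\notin V$ such that either both $\north(R)$ and $\east(R)$ lie in $V$, or both $\west(R)$ and $\south(R)$ do. Geometrically, this means removing a box that is a corner of the remaining unpulled region, with respect to the NE or SW direction.

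The existence of such a box is the key step. The remaining region is the coherence diagram minus a set that is closed under the relevant directions, so it is again a skew shape. Every nonempty skew shape has a box with nothing remaining above it and nothing remaining to its right in the same row: take the topmost row containing remaining boxes, and its rightmost remaining box. Because skew shapes have the property that columns and rows are intervals, nothing remains to the north of this box in its column. Boxes of $V$ (exceptional boxes or boxes already pulled) may sit inside the interval, but the regions $\north$, $\east$ only require containment in $V$. So the box chosen at each step is the topmost-rightmost unpulled box, processed row by row from top to bottom and, within each row, from right to left. This makes (a) hold via $\north(R)\subseteq V$ and (b) hold via $\east(R)\subseteq V$. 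Every route is eventually pulled, which produces a full triangulation equal to $\calT$.

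The main obstacle is the cycle case. There the rows $1^{\pm}$ wrap across the identified boundary, so "east" in the top two rows continues into the bottom two rows, and the top-to-bottom row order is not obviously compatible with it. We would handle this by ordering the bottom wrapped rows as the continuation of the top rows and checking directly that $\east(R)$ and $\north(R)$ of each box are pulled first. If this fails, we would switch to the SW corner for those rows, using $\west(R)\subseteq V$ and $\south(R)\subseteq V$.

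The anomalous case $\Cycle(k_1,k_2)$ also needs a separate check against the special clauses in the proof of Lemma~\ref{lem:pullcoherencerefinement}. Finally, $\Cycle(1,1)$ must be verified by hand: its $\bg$-polytope is small, and one checks directly that pulling $\vec{0}$ followed by the remaining vertices in a suitable order reproduces the DKK triangulation.
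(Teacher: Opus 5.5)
Your proof has the same skeleton as the paper's: invoke Corollary~\ref{cor:characterizing-pull-coherent-routes}, exhibit one complete sequence of DKK pull steps, and handle $\Cycle(1,1)$ separately. The only real difference is the witness sequence.

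The paper sweeps diagonals inward from the outer corners of the coherence diagram. Its pulled set therefore grows as a NE skew shape and a SW skew shape at the same time, which matches its later description of all valid orders, and that verification is only sketched. Your final order processes rows from top to bottom and each row from right to left. This keeps the pulled set, together with the exceptional boxes, closed under moving north and east. So $\north(R)\subseteq V$ and $\east(R)\subseteq V$ hold at every step, and Definition~\ref{def:dkkpullstep} is checked in one line; this is a cleaner witness than the paper's. The skew-shape discussion is unnecessary: $\north(R)\subseteq V$ holds simply because every earlier row is already finished.

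The hedges at the end of your proposal can all be dropped.

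\emph{Cycle case.} Keep the plain top-to-bottom order, with the bottom copies of rows $1^{\pm}$ processed last. By Definition~\ref{def:diagramregions}, only $\east$ and $\west$ cross the identification line, and $\north$ never wraps. For a box in a bottom row, $\east(R)$ consists of the boxes to its right in that bottom row, which were pulled earlier in the row, followed by the whole top copy of the row, which was pulled at the very beginning. A box in a top row has no wrapped $\east$ at all. Do not move the bottom rows up to follow the top rows: their columns contain unpulled boxes from the middle rows, so $\north(R)\subseteq V$ would be lost.

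\emph{Anomalous routes.} No separate check is needed. Lemma~\ref{lem:pullcoherencerefinement} already covers anomalous routes in every case except $\Cycle(1,1)$, and your order satisfies its purely combinatorial conditions.

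\emph{The case $\Cycle(1,1)$.} The check you defer is immediate. Here $P$ is a hexagon, so pulling at $\vec{0}$ already cuts it into six triangles. The DKK triangulation refines this subdivision by Lemma~\ref{lem:pullorigin}, so the two coincide.
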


\begin{proof}
    We first address $\Cycle(1,1)$. In this case, $P$ is a hexagon that is triangulated after pulling the origin. By Lemma~\ref{lem:pullorigin}, this is the DKK triangulation.
    
    Now, by Corollary~\ref{cor:characterizing-pull-coherent-routes}, any sequence of routes consisting entirely of DKK pull steps yields the DKK triangulation.
    An example of how to generate such a sequence is to start with the outer corners of the coherence diagram (corresponding to the long exceptional routes), and pull the pair of routes adjacent to each outer corner in sequence from upper-left to lower-right.
    Then, move to the set of diagonals formed by the boxes adjacent to the vertices that were pulled at this step, and pull those vertices along each diagonal moving from upper-left to lower-right.
    Iterating this, the final step in the process will be when a single diagonal remains, and pulling these vertices along the diagonal from upper-left to lower-right completes the pulling sequence.
\end{proof}

It follows that pulling orders on $P$ that produce the DKK triangulation are easily described by the coherence diagram.
They are precisely the sequences $(\vec{0},R_1,\dots, R_p)$ such that for all $i=1,\dots,p$, the routes $\{R_1, \dots, R_i\}$ form the union of the Ferrer's diagrams of a NE skew partition and a SW skew partition in the coherence diagram.
We can improve this analysis, by identifying which DKK pull steps do not alter the previous subdivision.

\begin{proposition}
    \label{prop:redundantDKKstep}
    Suppose $R$ yields a DKK pull step for the sequence $V=(\vec{0},R_1,\ldots,R_i)$.
    Then $\pull(P;V)=\pull(P;(\vec{0},R_1,\ldots,R_i,R))$ if and only if
    $\NE(R)\cup\SW(R) \subseteq V$.
    We call any DKK pull step satisfying this condition \defn{redundant}.
\end{proposition}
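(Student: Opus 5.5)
Pulling at $R$ changes the subdivision $\calS=\pull(P;V)$ exactly when some cell $Y\in\calS$ containing $R$ is not already a pyramid with apex $R$. Equivalently, some face of $\calS$ containing $R$ has $R$ lying in a face of dimension at least two that is not yet coned from $R$. Since $\calS$ is refined by the DKK triangulation and $R$ is a DKK pull step, I would use the pulling description of Theorem~\ref{thm:pulling1} and Remark~\ref{remark:face_pulling} to reduce the question to pairs of vertices. Pulling $R$ is trivial iff there is no vertex $S$ of a cell containing $R$ such that $R$ and $S$ are cell-neighboring but the segment $RS$ is not an edge of $\calS$. Indeed, pulling at $R$ adds exactly the edges $\conv(R,S)$ for cell-neighbors $S$ not already joined to $R$ by an edge, and nothing else changes if no such $S$ exists.

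The key step is to identify those $S$ via the Coherence Lemma~\ref{lem:thecoherencelemma}. Since $R$ is a DKK pull step, by Lemma~\ref{lem:pullcoherencerefinement} all conflicting routes are already separated from $R$, so only coherent $S$ matter. By part (ii)(c), if $S$ lies outside the four diagonal regions, then $RS$ is an edge of $P$, and hence of every subdivision, so such $S$ never matter. By part (ii)(b), if $S\in \NE(R)\cup\SW(R)$, then $RS$ is the diagonal of the quadrilateral face $T\squarette U=\{T,U,R,S\}$ with $T,U$ in conflict.

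For the direction ``$\Leftarrow$'': if $S\in V$, then $S$ was pulled earlier, which already created the edge $RS$ (or separated $R$ from $S$), so $S$ contributes nothing new. Thus if $\NE(R)\cup\SW(R)\subseteq V$, every potential new edge is already present and the pull is trivial.

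For ``$\Rightarrow$'': suppose $S\in\NE(R)\setminus V$. I would take the $2$-by-$2$ array with $R,S$ as the coherent diagonal and aim to show that $R$ and $S$ are still cell-neighbors in $\calS$ with no edge between them. I would choose $S$ extremal, meaning adjacent to the boundary of what has been pulled: pick $S\in\NE(R)\setminus V$ closest to $R$, so that the quadrilateral $\{T,U,R,S\}$ is as small as possible. Since $R$ is a DKK pull step, one of $\west(R),\north(R)$ is in $V$, and one of $\east(R),\south(R)$ is in $V$. This should force one of $T,U$ into $V$, which splits the quadrilateral along the diagonal through that pulled vertex, namely $TU$. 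Then $R$ and $S$ remain in a common triangle of the quadrilateral, but since neither was pulled, the diagonal $RS$ is absent. Pulling $R$ then creates the edge $RS$ and the subdivision changes.

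The main obstacle is this last step: verifying that the quadrilateral containing $R$ and $S$ has not been destroyed or subdivided so as to already contain $RS$. I would check via Theorem~\ref{thm:pulling1} that the only earlier pulls affecting this face are its own vertices $T,U$. Anomalous routes in $\Cycle(k_1,k_2)$ need the separate check from part (i) of the Coherence Lemma, handled case by case as in Lemma~\ref{lem:pullcoherencerefinement}.
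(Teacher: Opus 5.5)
Your ($\Leftarrow$) direction follows the same line as the paper's proof. Your ($\Rightarrow$) direction, however, fails at its key step and has the geometry backwards.

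For $S\in\NE(R)\setminus V$, the array is $T\squarette U=\{T,U,R,S\}$ with $T\in\north(R)\cap\west(S)$ and $U\in\east(R)\cap\south(S)$. So $RS$ and $TU$ are the two diagonals of the square face. Suppose $T$ or $U$ had been pulled while the square was intact. Then Theorem~\ref{thm:pulling1} says the diagonal $TU$ separates $R$ from $S$. They would not ``remain in a common triangle'', and pulling $R$ would then add no edge $RS$.

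Nor do the pull-step conditions put $T$ or $U$ into $V$; pull-coherence of $V$ forbids it. The first of $T,U$ pulled while $R,S\notin V$ creates the edge $TU$. That edge has no interior lattice points and survives into $\pull(P;V)$, so it would have to be a DKK edge, although $T$ and $U$ conflict. (So in your situation the conditions on $R$ necessarily hold via $\west(R)\subseteq V$ and $\south(R)\subseteq V$.)

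The correct conclusion is the opposite of yours. None of $T,U,R,S$ lies in $V$, they are the only lattice points of the square, and pulling at points off a face never subdivides it. Hence the square is still an undivided face of $\calS$, $R$ and $S$ are cell-neighbors with no edge between them, and pulling $R$ cuts the square along $RS$.

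The paper avoids the square altogether:
\begin{enumerate}
\item $R$ and $S$ are coherent, so they lie in a common DKK simplex and are cell-neighbors in $\calS$.
\item Pulling $R$ creates $RS$, so if nothing changes, $RS$ was already an edge of $\calS$.
\item Every edge of a pulling subdivision is an edge of $P$ or has a pulled endpoint.
\item $RS$ is not an edge of $P$ (Lemma~\ref{lem:thecoherencelemma}) and $R\notin V$, which forces $S\in V$.
\end{enumerate}

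Second, your opening reduction (no new edge means no change) is false for general subdivisions. Pulling an equatorial vertex of a triangular bipyramid, which is already adjacent to every other vertex, splits it into two tetrahedra without creating any edge. The paper's ($\Leftarrow$) passes over this point quickly as well, so you need an argument that uses the structure at hand.

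Each cell of $\calS$ containing $R$ is an iterated pyramid $\conv(\bzero,x_1,\dots,x_m,G)$, where the $x_i$ are pulled and $G\ni R$ is a face of $P$ containing no pulled point. The pull is therefore trivial if and only if each such $G$ is a pyramid with apex $R$. By Proposition~\ref{prop:faces-edge-sets}, $G$ is a flow polytope whose facets are cut out by hyperplanes $x_e=0$. So $G$ is a pyramid with apex $R$ if and only if some edge of $R$ lies on no other route of $G$.

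If every edge of $R$ were covered, splice the route that rejoins $R$ earliest with the route that leaves $R$ latest. This gives a route of $G$ meeting $R$ in three components, hence not adjacent to $R$. So adjacency of $R$ to every other vertex of $G$, which is what your ($\Leftarrow$) argument actually establishes, does suffice — but this step has to be argued, not asserted.
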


\begin{proof}
    For the forward direction, if the pulling subdivisions are equal, then there is an edge between $R$ and every route coherent with $R$, which implies that every route coherent with $R$ has been pulled in $V$.
    By Lemma~\ref{lem:thecoherencelemma}, this implies that $\NE(R)\cup\SW(R) \subseteq V$.

    For the converse direction, if $\NE(R)\cup\SW(R) \subseteq V$, then every route coherent with that does not already lie on an edge of $P$ has been pulled as part of the sequence $V$.
    Thus, every simplex in the DKK triangulation that contains $R$ already exists in $\pull(P,V)$, and hence the pulling triangulations are equal.
\end{proof}

\begin{figure*}
    \centering
    \begin{subfigure}[t]{0.8\textwidth}
        \centering
        \includegraphics[height=0.9in]{figures/caracol_pulling_2vxs.tex}
        \caption{For $\Path(2)$, pulling at the origin is the only step needed to produce the DKK triangulation.
        The remaining vertices all satisfy the condition for DKK pull steps, regardless of the order in which they are pulled.}
    \end{subfigure}
    \vspace{1em}
    \begin{subfigure}[t]{\textwidth}
        \centering
        \includegraphics[height=0.9in]{figures/caracol_pulling_4vxs.tex}
        \caption{The sequence of options for DKK pull steps in Example~\ref{ex:pathpulling}}.
    \end{subfigure}
    \caption{Diagrams representing pulling orders for $\Path(2)$ and $\Path(4)$.
    Bullets represent routes that have been pulled. 
    A box is shaded blue or green if the route can be pulled, where it is shaded green if it is a redundant DKK pull step.}
    \label{fig:dkkpullsteps}
\end{figure*}

\begin{example}
    \label{ex:pathpulling}
    Consider Figure~\ref{fig:dkkpullsteps}.
    In (A), the coherence diagram for $\Path(2)$ is shown.
    Here, pulling at the origin is the only step needed to produce the DKK triangulation, and all remaining routes are redundant.
    In (B), a sequence of irredundant DKK pull steps is shown.
    Once the diagram on the right-hand side is obtained, the subdivision is the DKK triangulation, and any subsequent DKK pull steps are redundant.
\end{example}

\section{Further directions}\label{sec:furtherdirections}

It was shown in~\cite{Polytopes2} that DKK triangulations of amply framed DAGs are closely related to the $\tau$-tilting theory~\cite{AIR} of certain gentle algebras. This connection was further extended in~\cite{Berggren}, by defining for a general gentle algebra $\Lambda$ a \emph{turbulence polyhedron} $\calF_1(\Lambda)$ which is triangulated by the $\tau$-tilting theory of $\Lambda$; for certain gentle algebras, this triangulated turbulence polyhedron is the same as the DKK-triangulated flow polytope of an associated amply framed DAG. The work~\cite{Berggren} moreover showed the existence of convex $\bg$-polytopes (as defined in~\cite{AHIKM}) for arbitrary representation-finite gentle algebras which restricts to the notion of $\bg$-polytope considered in this work. We expect that the main results of this paper can be generalized to the setting of representation-finite gentle algebras.

In particular, we expect that gentle algebras whose $\bg$-polytopes are locally anti-blocking are precisely the gentle Nakayama algebras, i.e., gentle algebras whose underlying quiver is either a directed path or an oriented cycle with some relations.
Furthermore, DKK triangulations of these $\bg$-polytopes should also be pulling triangulations where the pulling order can be read off from the Auslander-Reiten quiver of the gentle algebra, which in the gentle Nakayama case is equivalent to the coherence diagram (Remark~\ref{remk:ar-quiver}). Moreover, we believe that a similar theory of reading pulling orders for DKK-triangulated $\bg$-polytopes from the Auslander-Reiten quiver may exist for more general classes of $\bg$-polytopes (which are not necessarily locally anti-blocking), such as gentle algebras whose underlying quiver is a tree.

Finally, it would be interesting to ask whether the results about pulling can be extended from DKK triangulations of $\bg$-polytopes to that of their associated flow polytopes.    

\bibliographystyle{plain}
\bibliography{bibliography}

\end{document}